\newcommand{\Xcomment}[1]{}
\newtheorem{theorem}{Theorem}[section]
\newtheorem{lemma}[theorem]{Lemma}
\newtheorem{corollary}[theorem]{Corollary}
\newtheorem{prop}[theorem]{Proposition}
\newcommand{\SEC}[1]{\ref{sec:#1}}  % reference to section
\newcommand{\SSEC}[1]{\ref{ssec:#1}}  % reference to subsection
\makeatletter \@addtoreset{equation}{section} \makeatother
\newenvironment{proof}{\noindent{\bf Proof}~}%
{\hfill$\qed$\medskip}
\def\qed{ \ \vrule width.1cm height.3cm depth0cm}
\newenvironment{numitem1}{\refstepcounter{equation}\begin{enumerate}%
\item[(\thesection.\arabic{equation})]}{\end{enumerate}}
\newcommand{\refeq}[1]{(\ref{eq:#1})}  % reference to equation
\renewcommand{\section}{\@startsection{section}{1}{0pt}%
{-3.5ex plus -1ex minus -.2ex}{2.3ex plus .2ex}%
{\normalfont\Large}}
\renewcommand{\subsection}{\@startsection{subsection}{2}{0pt}%
{-3.0ex plus -1ex minus -.2ex}{1.5ex plus .2ex}%
{\normalfont\normalsize\bf}}
\renewcommand{\subsubsection}{\@startsection{subsubsection}{2}{0pt}%
{-2.0ex plus -1ex minus -.2ex}{-2.0ex plus .2ex}%
{\normalfont\normalsize\underline}}
\def\Rset{{\mathbb R}}
\def\Zset{{\mathbb Z}}
\def\Dscr{{\cal D}}
\def\Escr{{\cal E}}
\def\Fscr{{\cal F}}
\def\Iscr{{\cal I}}
\def\Mscr{{\cal M}}
\def\Cbold{{\bf C}}
\def\Qbold{{\bf Q}}
\def\Sbold{{\bf S}}
\def\Wbold{{\bf W}}
\def\Inver{{\rm Inv}}
\def\tilde{\widetilde}
\def\hat{\widehat}
\def\eps{\epsilon}
\def\Tst{T^{\rm st}}
\def\Tant{T^{\rm ant}}
\def\bd{{\rm bd}}
\def\Zfr{Z^{\,\rm fr}}
\def\Zrear{Z^{\,\rm re}}
\def\Zrim{Z^{\,\rm rim}}
\def\Sfr{S^{\,\rm fr}}
\def\Srear{S^{\,\rm re}}
\def\zetafr{\zeta^{\,\rm fr}}
\def\zetarear{\zeta^{\,\rm re}}
\def\taufr{\tau^{\,\rm fr}}
\def\taurear{\tau^{\,\rm re}}
\def\Mfr{M^{\,\rm fr}}
\def\Pifr{\Pi^{\,\rm fr}}
\def\Pirear{\Pi^{\,\rm re}}
\def\Qcon{Q^{\,\rm con}}
\def\Qfragmin{Q^{\bullet-}}
\def\Qfragpl{Q^{\bullet+}}
\def\Qfrag{Q^{\boxtimes}}
\def\Yturn{\Lambda}
\begin{document}

 \title{On interrelations between strongly, weakly and chord separated set-systems
 (a geometric approach)}

 \author{V.I.~Danilov\thanks{Central Institute of Economics and
Mathematics of the RAS, 47, Nakhimovskii Prospect, 117418 Moscow, Russia;
email: danilov@cemi.rssi.ru.}
 \and
A.V.~Karzanov\thanks{Institute for System Analysis at FRC Computer Science and
Control of the RAS, 9, Prospect 60 Let Oktyabrya, 117312 Moscow, Russia;
emails: sasha@cs.isa.ru; akarzanov7@gmail.com. Corresponding author.
}
  \and
G.A.~Koshevoy\thanks{Central Institute of Economics and Mathematics of the RAS,
47, Nakhimovskii Prospect, 117418 Moscow, Russia; email:
koshevoy@cemi.rssi.ru.}
 }

\date{}

 \maketitle

 \begin{quote}
 {\bf Abstract.} \small We consider three types of set-systems that have
interesting applications in algebraic combinatorics and representation theory:
maximal collections of the so-called \emph{strongly separated}, \emph{weakly
separated}, and \emph{chord separated} subsets of a set $[n]=\{1,2,\ldots,n\}$.
These collections are known to admit nice geometric interpretations; namely,
they are bijective, respectively,  to rhombus tilings on the zonogon
$Z(n,2)$, combined tilings on $Z(n,2)$, and fine zonotopal tilings (or
``cubillages'') on the 3-dimensional zonotope $Z(n,3)$. We describe
interrelations between these three types of set-systems in $2^{[n]}$, by
studying interrelations between their geometric models. In particular, we
completely characterize the sets of rhombus and combined tilings properly
embeddable in a fixed cubillage, explain that they form distributive lattices,
 give efficient methods of extending a given rhombus or combined tiling to a
 cubillage, and etc.

 \medskip
{\em Keywords}\,: strongly separated sets, weakly separated sets, chord
separated sets, rhombus tiling, cubillage, higher Bruhat order

\medskip
{\em AMS Subject Classification}\, 05E10, 05B45
 \end{quote}

\baselineskip=15pt
\parskip=2pt
%----------------------- Sec. 1

\section{Introduction}  \label{sec:intr}

For a positive integer $n$, the set $\{1,2,\ldots,n\}$ with the usual order is
denoted by $[n]$. For a set $X\subseteq[n]$ of elements $x_1<x_2<\ldots<x_k$,
we may write $x_1x_2\cdots x_k$ for $X$, $\min(X)$ for $x_1$, and $\max(X)$ for
$x_k$ (where $\min(X)=\max(X):=0$ if $X=\emptyset$). We use three
binary relations on the set $2^{[n]}$ of all subsets of $[n]$. Namely, for
subsets $A,B\subseteq[n]$, we write:
  \begin{numitem1}
  \begin{itemize}
\item[(i)]  $A<B$ if $\max(A)<\min(B)$ (\emph{global dominating});
\item[(ii)]
$A\lessdot B$ if $(A-B)<(B-A)$, where $A'-B'$ denotes $\{i'\colon A'\ni
i'\not\in B'\}$ (\emph{global dominating after cancelations});
\item[(iii)]  $A\rhd B$ if $A-B\ne\emptyset$, and $B-A$
can be expressed as a union of nonempty subsets $B',B''$ so that $B'<(A-B)<B''$
(\emph{splitting}).
  \end{itemize}
  \label{eq:2relat}
   \end{numitem1}

We also say that $A$ \emph{surrounds} $B$ if there are no elements $i<j<k$ of
$[n]$ such that $i,k\in B-A$ and $j\in A-B$ (equivalently, if either $A=B$ or
$A\lessdot B$ or $B\lessdot A$ or $B\rhd A$). The above relations are used in
the following notions.
 \medskip

\noindent \textbf{Definitions.} ~Following Leclerc and Zelevinsky~\cite{LZ},
sets $A,B\subseteq[n]$ are called \emph{strongly separated} (from each other)
if $A\lessdot B$ or $B\lessdot A$ or $A=B$, and called \emph{weakly separated}
if either $|A|\le |B|$ and $A$ surrounds $B$, or $|B|\le|A|$ and $B$ surrounds
$A$ (or both). Following terminology of Galashin~\cite{gal}, $A,B\subseteq [n]$
are called \emph{chord separated} if one of $A,B$ surrounds the other
(equivalently, if there are no elements $i<j<k<l$ of $[n]$ such that $i,k$
belong to one, and $j,\ell$ to the other set among $A-B$ and $B-A$).
 \medskip

(The third notion for $A,B$ is justified by the observation that if $n$ points
labeled $1,2,\ldots,n$ are disposed on a circumference $O$, in this order
cyclically, then there exists a chord to $O$ separating $A-B$ from $B-A$.)

Accordingly, a collection (set-system) $\Fscr\subseteq 2^{[n]}$ is called
strongly (resp. weakly, chord) separated if any two of its members are such.
For brevity, we refer to strongly, weakly, and chord separated collections as
\emph{s-}, \emph{w-}, and \emph{c-collections}, respectively. In the hierarchy
of these collections, any s-collection is a w-collection, and any w-collection
is a c-collection, but the converse need not hold. Such collections are
encountered in interesting applications (in particular, w-collections appeared
in~\cite{LZ} in connection with the problem of quasi-commuting flag minors of a
quantum matrix). Also they admit impressive geometric-combinatorial
representations (which will be discussed later).

An important fact is that these three sorts of collections possess the property
of \emph{purity}. More precisely, we say that a domain $\Dscr\subseteq 2^{[n]}$
is \emph{s-pure} (\emph{w-pure}, \emph{c-pure}) if all inclusion-wise maximal
s-collections (resp. w-collections, c-collections) in $\Dscr$ have the same
cardinality, which in this case is called the \emph{s-rank} (resp.
\emph{w-rank}, \emph{c-rank}) of $\Dscr$. We will rely on the following results
on the full domain $2^{[n]}$.

\begin{numitem1} \label{eq:s-pure}
\cite{LZ} ~$2^{[n]}$ is s-pure and its s-rank is equal to $\binom{n}{2}+
\binom{n}{1}+\binom{n}{0}$ ($=\frac12 n(n+1)+1$).
  \end{numitem1}
\begin{numitem1} \label{eq:w-pure}
\cite{DKK2} ~$2^{[n]}$ is w-pure and its w-rank is equal to $\binom{n}{2}+
\binom{n}{1}+\binom{n}{0}$.
  \end{numitem1}
\begin{numitem1} \label{eq:c-pure}
\cite{gal} ~$2^{[n]}$ is c-pure and its c-rank is equal to
$\binom{n}{3}+\binom{n}{2}+ \binom{n}{1}+\binom{n}{0}$.
  \end{numitem1}

(The phenomenon of w-purity has also been established for some other important
domains, see~\cite{OPS,DKK3,OS}; however, those results are beyond the main
stream of our paper.)

As is seen from~\refeq{s-pure}--\refeq{c-pure}, the c-rank of $2^{[n]}$ is at
$O(n)$ times larger that its s- and w-ranks (which are equal), and we address
the following issue: given a maximal c-collection $C\subset 2^{[n]}$, what can
one say about the sets $\Sbold(C)$ and $\Wbold(C)$ of inclusion-wise maximal
s-collections and w-collections, respectively, contained in $C$?

It turns out that a domain $C$ of this sort need not be s-pure or w-pure in
general, as we show by an example in Sect.~\SSEC{example}. Nevertheless, the
sets of s-collections and w-collections contained in $C$ and having the maximal
\emph{size} (equal to $\frac12 n(n+1)+1$), denoted as $\Sbold^\ast(C)$ and $\Wbold^\ast(C)$,
respectively, have nice structural properties, and to present them is just the
main purpose of this paper.

On this way, we are based on the following known geometric-combinatorial
constructions for s-, w-, and c-collections. As it follows from results
in~\cite{LZ}, each maximal s-collection in $2^{[n]}$ corresponds to the vertex
set of a \emph{rhombus tiling} on the $n$-zonogon in the plane, and vice versa.
A somewhat more sophisticated planar structure, namely, the so-called
\emph{combined tilings}, or \emph{combies}, on the $n$-zonogon are shown to
represent the maximal w-collections in $2^{[n]}$, see~\cite{DKK3}. As to the maximal
c-collections, Galashin~\cite{gal} recently showed that they are bijective to
subdivisions of the 3-dimensional zonotope $Z(n,3)$ into parallelotopes. For
brevity, we liberally refer to such subdivisions as \emph{cubillages}, and its
elements (parallelotopes) as \emph{cubes}.

In this paper, we first discuss interrelations between strongly and chord
separated set-systems. A brief outline is as follows.

(a) For a maximal c-collection $C\subset 2^{[n]}$, let $Q=Q(C)$ be its
associated cubillage (where the elements of $C$ correspond to the 0-dimensional
cells, or \emph{vertices}, of $Q$ regarded as a complex). Then for each
$S\in\Sbold^\ast(C)$, its associated rhombus tiling $T(S)$ is viewed (up to a
piecewise linear deformation) as a 2-dimensional subcomplex of $Q$, called an
\emph{s-membrane} in it. Furthermore, these membranes (and therefore the
members of $\Sbold^\ast(C)$) form a distributive lattice with the minimal and
maximal elements to be the ``front side'' $\Zfr$ and ``rear side'' $\Zrear$ of
the boundary subcomplex of $Z(n,3)$, respectively. This lattice is ``dense'',
in the sense that any two s-collections whose s-membranes are neighboring in
the lattice are obtained from each other by a standard \emph{flip}, or
\emph{mutation} (which involves a hexagon, or, in terminology of Leclerc and
Zelevinsky~\cite{LZ}, is performed ``in the presence of six witnesses'').

(b) It is natural to raise a ``converse'' issue: given a maximal s-collection
$S\subset 2^{[n]}$, what can one say about the set $\Cbold(S)$ of maximal
c-collections containing $S$? One can efficiently construct an instance of such
c-collections, by embedding the tiling $T(S)$ (as an s-membrane) into the
``empty'' zonotope $Z(n,3)$ and then by growing, step by step (or cube by
cube), a required cubillage containing $T(S)$. In fact, the set of
cubillages for $\Cbold(S)$ looks like a ``direct product'' of two sets $\Qbold^-$
and $\Qbold^+$, where the former (latter) is formed by partial
cubillages consisting of ``cubes'' filling the volume of $Z(n,3)$
between the surfaces $\Zfr$ and $T(S)$ (resp. between $T(S)$ and $\Zrear$).
  \Xcomment{
We explain that each of $\Qbold^-, \Qbold^+$ is connected by its own system of
3-flips, where a \emph{3-flip} in $Q$ consists in choosing a sub-zonotope $Z'$
isomorphic to $Z(4,3)$ subdivided into four cubes contained in $Q$, and then
replacing this configuration by the other subdivision of $Z'$ into four cubes
(which is unique). Also other properties are discussed.
  }
 \smallskip

A somewhat similar programme is fulfilled for w-collections, and on this way,
we obtain main results of this paper. We consider a maximal c-collection
$C\subset 2^{[n]}$ and cut each cube of the cubillage $Q$ associated with $C$
into two tetrahedra and one octahedron, forming a subdivision of $Z(n,3)$ into
smaller pieces, denoted as $\Qfrag$ and called the \emph{fragmentation} of $Q$.
We show that each combi $K(W)$ associated with a maximal by size w-collection
$W\subset \Wbold^\ast(C)$ is related to a set of 2-dimensional subcomplexes of
$\Qfrag(C)$, called \emph{w-membranes}. Like s-membranes, the set of all
w-membranes of $\Qfrag$ are shown to form a distributive lattice with the
minimal element $\Zfr$ and the maximal element $\Zrear$, and any two
neighboring w-membranes in the lattice are linked by either a \emph{tetrahedral
flip} or an \emph{octahedral flip} (the latter corresponds, for a w-collection,
to a ``mutation in the presence of four witnesses'', in terminology
of~\cite{LZ}). As to  the ``converse direction'', we consider a fixed maximal
w-collection $W\subset 2^{[n]}$ and develop an efficient geometric method to
construct a cubillage containing the combi $K(W)$.
Also additional results on interrelations between s- and w-collections from
one side, and c-collections from the other side are presented.
  \smallskip

This paper is organized as follows. Section~\SEC{backgr} recalls definitions of
rhombus tilings and combined tilings on a zonogon and fine zonotopal tilings
(``cubillages'') on a 3-dimensional zonotope, and reviews their relations to
maximal s-, w-, and c-collections in $2^{[n]}$. Section~\SEC{smembr} starts
with an example of a maximal c-collection in $2^{[n]}$ that is neither s-pure
nor w-pure. Then it introduces s-membranes in a cubillage, discusses their
relation to rhombus tilings, and describes transformations of cubillages on
$Z(n,3)$ to ones on $Z(n-1,3)$ and back, that are needed for further purposes.
Section~\SEC{lattice_s} studies the structure of the set of s-membranes in a
fixed cubillage and, as a consequence, describes the lattice $\Sbold^\ast(C)$.
Section~\SEC{embed_rt} discusses the task of constructing a cubillage
containing one or two prescribed rhombus tilings. Then we start studying
interrelations between maximal w- and c-collections. In Section~\SEC{w-membr}
we introduce w-membranes in the fragmentation $\Qfrag$ of a fixed cubillage
$Q$, explain that they form a lattice, demonstrate a relationship to combined
tilings, and more. The concluding Section~\SEC{embed_combi} is devoted to the
task of extending a given combi to a cubillage, which results in an efficient
algorithm of finding a maximal c-collection in $2^{[n]}$ containing a given
maximal w-collection.

%----------------------- Sec. 2

\section{Backgrounds} \label{sec:backgr}

In this section we recall the geometric representations for  s-, w-, and
c-collections that we are going to use. For disjoint subsets $A$ and
$\{a,\ldots,b\}$ of $[n]$, we use the abbreviated notation $Aa\ldots b$ for
$A\cup\{a,\ldots,b\}$, and write $A-c$ for $A-\{c\}$ when $c\in A$.

%----------------------- Subsec. 2.1

\subsection{Rhombus tilings}  \label{ssec:rhomb_til}

Let $\Xi=\{\xi_1,\ldots,\xi_n\}$ be a system of $n$ non-colinear vectors in the
upper hyperplane $\Rset\times \Rset_{\ge 0}$ that follow in this order
clockwise around $(0,0)$. The \emph{zonogon} generated by $\Xi$ is the
$2n$-gone that is the Minkowski sum of segments $[0,\xi_i]$, $i=1,\ldots,n$,
i.e., the set
  $$
Z=Z_\Xi:=\{\lambda_1\xi_1+\ldots+ \lambda_n\xi_n\colon \lambda_i\in\Rset,\;
0\le\lambda_i\le 1,\; i=1,\ldots,n\},
  $$
also denoted as $Z(n,2)$. A tiling that we deal with is a subdivision $T$ of
$Z$ into \emph{tiles}, each being a parallelogram of the form $\sum_{k\in X} \xi_k
+\{\lambda\xi_i+\lambda'\xi_j\colon 0\le \lambda,\lambda'\le 1\}$ for
some $i<j$ and some subset $X\subseteq[n]-\{i,j\}$. In other words, the tiles
are not overlapping (have no common interior points) and their union is $Z$. A
tile determined by $X,i,j$ as above is called an $ij$-\emph{tile} and denoted
as $\rho(X|ij)$.

We identify each subset $X\subseteq[n]$ with the point $\sum_{i\in X} \xi_i$ in
$Z$ (assuming that the generators $\xi_i$ are $\Zset$-independent). Depending
on the context, we may think of $T$ as a 2-dimensional complex and associate to
it the planar directed graph $(V_T,E_T)$ in which each vertex (0-dimensional
cell) is labeled by the corresponding subset of $[n]$ and each edge
(1-dimensional cell) that is a parallel translate of $\xi_i$ for some $i$ is
called an $i$-\emph{edge}, or an edge of \emph{type} (or \emph{color}) $i$. In
particular, the \emph{left boundary} of the zonogon is the directed path
$(v_0,e_1,v_1,\ldots,e_n,v_n)$ in which each vertex $v_i$ is the set $[i]$ (and
$e_i$ is an $i$-edge), whereas the \emph{right boundary} of $Z$ is the directed
path $(v'_0,e'_1,v'_1,\ldots, e'_n,v'_n)$ with $v'_i=[n]-[n-i]$ (and $e'_i$
being an $(n-i+1)$-edge).

We call the vertex set $V_T$ (regarded as a set-system in $2^{[n]}$) the
\emph{spectrum} of $T$. In fact, the graphic structure of $T$ (and therefore
its spectrum) does not depend on the choice of generating vectors $\xi_i$ (by
keeping their ordering clockwise). In the literature one often takes vectors
of the same euclidean length, in which case each tile becomes a rhombus and $T$
is called a \emph{rhombus tiling}. In what follows we will liberally use this
term whatever generators $\xi_i$ are chosen.

One easily shows that for any $1\le i<j\le n$, there exists
a unique $ij$-tile, or $ij$-\emph{rhombus}, in $T$. The central property of
rhombus tilings is as follows.

 \begin{theorem} {\rm \cite{LZ}} \label{tm:LZ}
The correspondence $T\mapsto V_T$ gives a bijection between the set ${\bf
RT}_n$ of rhombus tilings on $Z(n,2)$ and the set ${\bf S}_n$ of maximal
s-collections in $2^{[n]}$.
  \end{theorem}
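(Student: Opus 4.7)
The plan is to establish the bijection $T \mapsto V_T$ in three stages: show that $V_T$ is always a maximal s-collection, verify that the map is injective, and construct a tiling from an arbitrary maximal s-collection.

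Strong separation of two vertices $A, B \in V_T$ is best understood via the dual pseudo-line arrangement of $T$: for each color $i \in [n]$, the $i$-edges concatenate into a single ``$i$-zone'' whose median curve is a pseudo-line $\ell_i$, and the $n$ pseudo-lines form a simple arrangement entering and exiting $\partial Z(n,2)$ at the two boundary $i$-edges. A vertex labeled $X$ lies on a prescribed side of $\ell_i$ iff $i \in X$, so the pseudo-lines separating $A$ from $B$ are precisely those indexed by $A \triangle B$. Tracing a short transversal from $A$ to $B$ and using the fixed boundary ordering of the $\ell_i$ forces either $(A-B) < (B-A)$ or $(B-A) < (A-B)$, giving $A \lessdot B$ or $B \lessdot A$. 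Maximality of $V_T$ then follows from a count via Euler's formula on the planar subdivision $T$: there are $\binom{n}{2}$ rhombi (one per color pair $i<j$) and $2n$ boundary edges, yielding $|V_T| = \binom{n}{2} + n + 1$, which matches the s-rank of $2^{[n]}$ in~\refeq{s-pure}.

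Injectivity of $T \mapsto V_T$ is immediate: the edges of $T$ are the pairs $\{A, A\cup\{i\}\} \subseteq V_T$ with $i \notin A$, and the $ij$-rhombi are the quadruples $\{A,\,A\cup\{i\},\,A\cup\{j\},\,A\cup\{i,j\}\} \subseteq V_T$, so $V_T$ determines $T$ as a planar complex.

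The main obstacle is surjectivity: given an abstract maximal s-collection $S \subset 2^{[n]}$ of size $\binom{n}{2} + n + 1$, construct a rhombus tiling with $V_T = S$. I would argue by induction on $n$ through a zone-peeling procedure following~\cite{LZ}. Partitioning $S = S_{\bar n} \sqcup S_n$ according to whether $n$ is present, the geometric picture predicts that $S_n$ has exactly $n$ elements whose deletions of $n$ form a chain $\emptyset = Y_0 \subset Y_1 \subset \cdots \subset Y_{n-1} = [n-1]$ under inclusion (the upper boundary of the $n$-zone), while $S_{\bar n}$ is a maximal s-collection on $2^{[n-1]}$; induction produces a rhombus tiling on $Z(n-1,2)$ with vertex set $S_{\bar n}$, and the $n$-zone is reinserted along the chain to yield $T$ with $V_T = S$. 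The delicate combinatorial core---showing that strong separation forces $|S_n| = n$ and the chain structure of $\{A - \{n\} : A \in S_n\}$, and that $S_{\bar n}$ attains the s-rank on $2^{[n-1]}$---is where the full strength of strong separation is essentially used, and is carried out in detail in~\cite{LZ}.
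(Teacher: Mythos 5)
The paper itself does not prove Theorem~\THEO{LZ}; it imports it from Leclerc--Zelevinsky~\cite{LZ} and only records, without argument, the recipe for the inverse map (join each pair $X$, $Xi$ by a straight edge and observe the bounded faces are rhombi). So the relevant question is simply whether your sketch would go through. The forward direction via dual pseudo-lines, the Euler-formula count $|V_T|=\binom{n}{2}+n+1$, and the injectivity remark are all standard and fine.

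The surjectivity step, however, contains a concrete error. You peel by partitioning $S=S_{\bar n}\sqcup S_n$ according to membership of $n$ and assert that $|S_n|=n$, that $\{A-\{n\}:A\in S_n\}$ is a chain, and that $S_{\bar n}$ is a maximal s-collection in $2^{[n-1]}$. All of this fails already for $S=V_{\Tant_n}$, the co-interval collection: there the members containing $n$ are $[n]-I_{ij}$ with $j\le n-1$ together with $[n]$ itself, so $|S_n|=\binom{n}{2}+1$, while $S_{\bar n}=\{\emptyset,[1],[2],\dots,[n-1]\}$ has only $n$ elements, far below the s-rank $\binom{n-1}{2}+n$ of $2^{[n-1]}$ once $n\ge 3$. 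The reason is that peeling the $n$-zone does not split $V_T$ into ``$n$ present'' and ``$n$ absent''; it \emph{collapses} each pair $A,\,A\cup\{n\}$ to a single point. The object that becomes a maximal s-collection in $2^{[n-1]}$ is therefore the image $S':=\{A-\{n\}:A\in S\}$, not the subset $S_{\bar n}$, and the set $\{A:A\in S\text{ and }A\cup\{n\}\in S\}$ is what records the snake along which the zone must be re-inserted. Showing that this snake is a monotone lattice path from $\emptyset$ to $[n-1]$ inside the tiling of $Z(n-1,2)$ produced by induction is precisely the nontrivial combinatorial content that your sketch defers, and it is the part one must actually take from~\cite{LZ}.
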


In particular, each maximal s-collection $S$ determines a unique rhombus tiling
$T$ with $V_T=S$, and this $T$ is constructed easily: each pair of vertices of
the form $X,Xi$ is connected by (straight line) edge from $X$ to $Xi$; then the
resulting graph is planar and all its faces are rhombi, giving $T$. Two rhombus
tilings play an especial role. The spectrum of one, called the \emph{standard
tiling} and denoted as $\Tst_n$, is formed by all \emph{intervals} in $[n]$,
i.e., the sets $I_{ij}:=\{i,i+1,\ldots,j\}$ for $1\le i\le j\le n$, plus the
``empty interval'' $\emptyset$. The other one, called the \emph{anti-standard
tiling} and denoted as $\Tant_n$, has the spectrum consisting of all
\emph{co-intervals}, the sets of the form $[n]-I_{ij}$. These two tilings for
$n=4$ are illustrated on the picture.

 \vspace{-0.3cm}
\begin{center}
\includegraphics{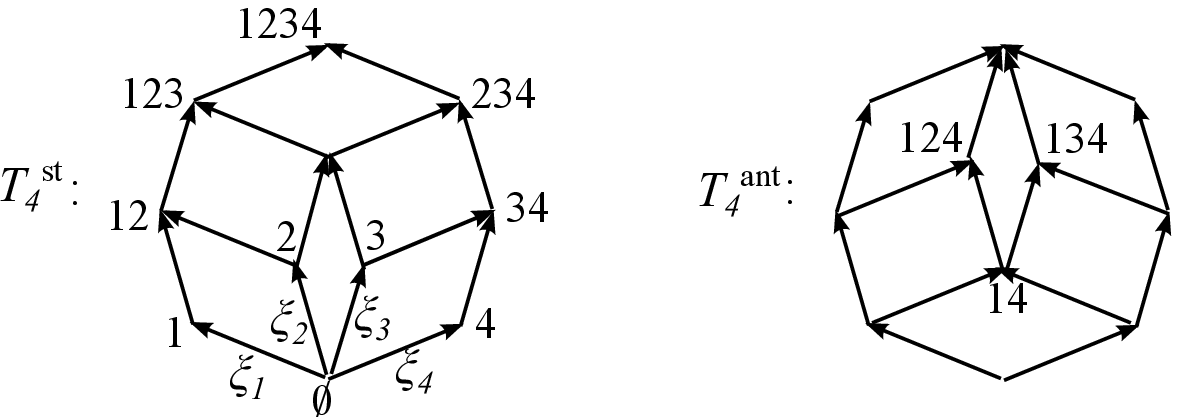}
\end{center}
\vspace{-0.2cm}

Next, as it follows from results in~\cite{LZ}, ${\bf RT}_n$ is endowed with a
poset structure. In this poset, $\Tst_n$ and $\Tant_n$ are the unique minimal
and maximal elements, respectively, and a tiling $T$ immediately precedes a
tiling $T'$ if $T'$ is obtained from $T$ by one \emph{strong} (or hexagonal)
\emph{raising flip} (and accordingly $T$ is obtained from $T'$ by one
\emph{strong lowering flip}). This means that
  \begin{numitem1}  \label{eq:strong_flip}
there exist $i<j<k$ and $X\subseteq [n]-\{i,j,k\}$ such that: $T$ contains the
vertices $X,Xi,Xj,Xk,Xij,Xjk,Xijk$, and the set $V_{T'}$ is obtained from $V_T$ by
replacing $Xj$ by $Xik$.
  \end{numitem1}
(This transformation is called in~\cite{LZ} a ``mutation in the presence of six
witnesses'', namely, $X,Xi,Xk,Xij,Xjk,Xijk$.) See the picture.

\vspace{0cm}
\begin{center}
\includegraphics[scale=0.9]{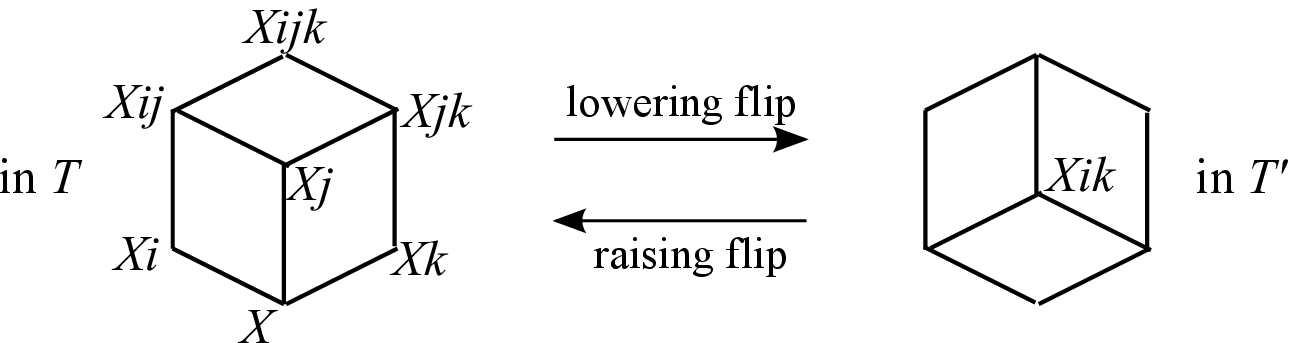}
\end{center}
\vspace{0cm}

We denote the corresponding hexagon in $T$ as $H=H(X|ijk)$ and say that $H$ has
$Y$-\emph{configuration} ($\Yturn$-\emph{configuration}) if the three rhombi
spanning $H$ are as illustrated in the left (resp. right) fragment of the above
picture.

%----------------------- Subsec. 2.2

\subsection{Combined tilings}  \label{ssec:combi}

For tilings of this sort, the system $\Xi$ generating the zonogon is required
to satisfy the additional condition of \emph{strict convexity}, namely: for any
$1\le i<j<k\le n$,
  \begin{equation} \label{eq:strict_xi}
\xi_j=\lambda\xi_i+\lambda'\xi_k,\quad \mbox{where $\lambda,\lambda'\in
\Rset_{>0}$ and $\lambda+\lambda'>1$}.
  \end{equation}

Besides, we use vectors $\eps_{ij}:=\xi_j-\xi_i$ for $1\le i<j\le n$. A
\emph{combined tiling}, or simply a \emph{combi}, is a subdivision $K$ of
$Z_\Xi$ into certain polygons specified below. Like the case of rhombus
tilings, a combi $K$ may be regarded as a complex and we associate to it a
planar directed graph $(V_K,E_K)$ in which each vertex corresponds to some
subset of $[n]$ and each edge is a parallel translate of either $\xi_i$ or
$\eps_{ij}$ for some $i,j$. In the later case we say that the edge has
\emph{type} $ij$. We call $V_K$ the \emph{spectrum} of $K$.

There are three sorts of tiles in $T$: $\Delta$-tiles, $\nabla$-tiles, and
lenses. A $\Delta$-\emph{tile} ($\nabla$-\emph{tile}) is a triangle with
vertices $A,B,C\subseteq[n]$ and edges $(B,A),(C,A),(B,C)$ (resp.
$(A,C),(A,B),(B,C)$) of types $i$, $j$ and $ij$, respectively, where $i<j$. For
purposes of Sect.~\SEC{w-membr}, we denote this tile as $\Delta(A|ji)$
(resp. $\nabla(A|ij)$), call $(B,C)$ its \emph{base} edge and call $A$ its
\emph{top} (resp. \emph{bottom}) vertex. See the left and middle fragments of
the picture.

\vspace{-0.2cm}
\begin{center}
\includegraphics{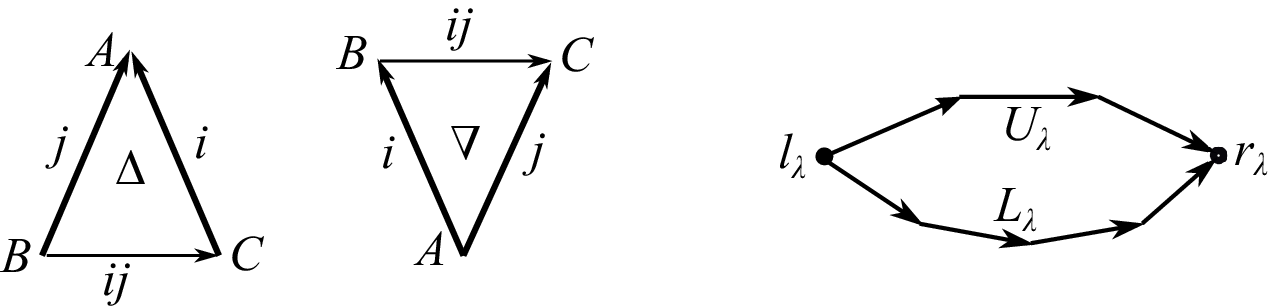}
\end{center}
\vspace{-0.2cm}

In a \emph{lens} $\lambda$, the boundary is formed by two directed paths
$U_\lambda$ and $L_\lambda$, with at least two edges in each, having the same
beginning vertex $\ell_\lambda$ and the same end vertex $r_\lambda$; see the
right fragment of the above picture. The \emph{upper boundary}
$U_\lambda=(v_0,e_1,v_1,\ldots,e_p,v_p)$ is such that $v_0=\ell_\lambda$,
$v_p=r_\lambda$, and $v_k=Xi_k$ for $k=0,\ldots,p$, where $p\ge 2$,
$X\subset[n]$ and $i_0<i_1<\cdots <i_p$ (so $k$-th edge $e_k$ is of type
$i_{k-1}i_k$). And the \emph{lower boundary}
$L_\lambda=(u_0,e'_1,u_1,\ldots,e'_q,u_q)$ is such that $u_0=\ell_\lambda$,
$u_q=r_\lambda$, and $u_m=Y-j_m$ for $m=0,\ldots,q$, where $q\ge 2$,
$Y\subseteq [n]$ and $j_0>j_1>\cdots>j_q$ (so $m$-th edge $e'_m$ is of type
$j_mj_{m-1}$). Then $Y=Xi_0j_0=Xi_pj_q$, implying $i_0=j_q$ and $i_p=j_0$, and
we say that the lens $\lambda$ has \emph{type} $i_0j_0$. Note that $X$ as well
as $Y$ need not be a vertex in $K$; we refer to $X$ and $Y$ as the \emph{lower}
and \emph{upper root} of $\lambda$, respectively. Due to
condition~\refeq{strict_xi}, each lens $\lambda$ is a convex polygon of which
vertices are exactly the vertices of $U_\lambda\cup L_\lambda$.
  \medskip

\noindent\textbf{Remark 1.} In the definition of a combi introduced
in~\cite{DKK3}, the generators $\xi_i$ are assumed to have the same euclidean
length. However, taking arbitrary (cyclically ordered) generators subject
to~\refeq{strict_xi} does not affect, in essence, the structure of the combi
and its spectrum, and in what follows we will vary the choice of generators
when needed. Next, to simplify visualizations, it will be convenient to think
of edges of type $i$ as ``almost vertical'', and of edges of type $ij$ as
``almost horizontal''; following terminology of~\cite{DKK3}, we refer to the
former edges as \emph{V-edges}, and to the latter ones as \emph{H-edges}. Note
that any rhombus tiling turns into a combi without lenses in a natural way:
each rhombus is subdivided into two ``semi-rhombi'' $\Delta$ and $\nabla$ by
drawing the ``almost horizontal'' diagonal in it.
 \smallskip

The picture below illustrates a combi $K$ having one lens $\lambda$ for $n=4$;
here the V-edges and H-edges are drawn by thick and thin lines, respectively.

\vspace{0cm}
\begin{center}
\includegraphics{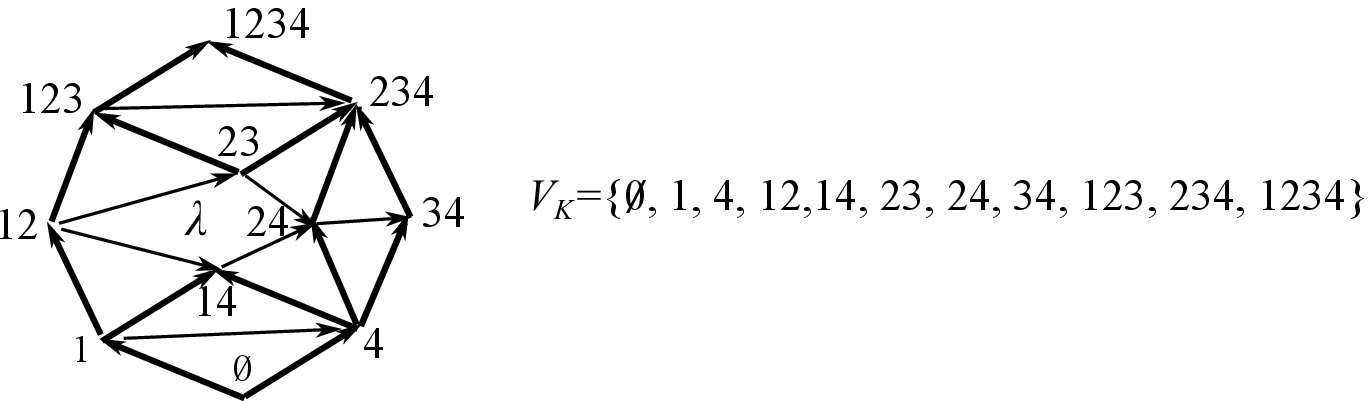}
\end{center}
\vspace{0cm}

We will rely on the following central result on combies.

 \begin{theorem} {\rm\cite{DKK3}} \label{tm:combi}
The correspondence $K\mapsto V_K$ gives a bijection between the set ${\bf K}_n$
of combined tilings on $Z(n,2)$ and the set ${\bf W}_n$ of maximal
w-collections in $2^{[n]}$.
  \end{theorem}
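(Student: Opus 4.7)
The plan has three ingredients: (i) $V_K$ is weakly separated for every combi $K$; (ii) $|V_K|$ attains the w-rank of $2^{[n]}$, so $V_K$ is inclusion-wise maximal; (iii) every $W\in{\bf W}_n$ arises as some $V_K$. Injectivity of $K\mapsto V_K$ is essentially free once the setup is fixed: identifying $X\subseteq[n]$ with the point $\sum_{i\in X}\xi_i\in Z_\Xi$ and taking the $\xi_i$ sufficiently generic (allowed by Remark 1), distinct subsets occupy distinct points, so the straight-line embedding of a combi is determined by $V_K$, and each bounded face is recovered uniquely from its boundary polygon in $V_K$.

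Next I would verify weak separation tile by tile. For $\Delta(A|ji)$ and $\nabla(A|ij)$ the three vertices $A,Ai,Aj$ are trivially weakly separated. For a lens $\lambda$ of type $i_0j_0$ with lower root $X$ and upper root $Y=Xi_0j_0$, the upper-boundary vertices have the form $Xi_k$ with $i_0<\cdots<i_p$, while the lower-boundary vertices have the form $Y-j_m$ with $j_0>\cdots>j_q$; a direct check using monotonicity shows that any two such sets are weakly separated. To upgrade this from pairs on a common tile to arbitrary $X,X'\in V_K$, I would use a connectivity/sweep argument: were $X,X'$ to admit a forbidden quadruple $i<j<k<\ell$, moving along a tile-by-tile path between $X$ and $X'$ in the planar subdivision would, at some step, produce such a quadruple inside the vertex set of a single tile, contradicting the local check.

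For maximality, I would compute $|V_K|$ by applying Euler's formula to the planar complex $K$, combining the tile inventory (each colour pair $i<j$ contributes a $\Delta/\nabla$ pair, plus lens contributions) with the fact that the boundary of $Z_\Xi$ supplies $2n$ edges and $2n$ vertices. The resulting count equals $\binom{n}{2}+\binom{n}{1}+\binom{n}{0}$, which by~\refeq{w-pure} is the w-rank of $2^{[n]}$; hence $V_K\in{\bf W}_n$, finishing the direction ``combi $\Rightarrow$ maximal w-collection''.

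The main obstacle is the converse direction. Given $W\in{\bf W}_n$, I would construct a combi $K$ with $V_K=W$ by induction on $n$. Writing $W^\circ:=\{A\in W : n\notin A\}$ and $W^\bullet:=\{A\in W : n\in A\}$, a standard argument based on the purity~\refeq{w-pure} shows that $W^\circ$ and $\{A-n : A\in W^\bullet\}$ are both maximal w-collections on $[n-1]$. The induction hypothesis then supplies combies $K^\circ,K^\bullet$ on $Z(n-1,2)$, which I would stack inside $Z_\Xi$ and join by $n$-coloured V-edges whenever both $A$ and $An$ belong to $W$; the regions where this pairing fails are to become lenses of types $jn$. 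The technical heart of the argument is proving that each such ``disagreement region'' is in fact a convex polygon whose boundary splits into a monotone-increasing upper path and a monotone-decreasing lower path, matching the $U_\lambda/L_\lambda$ description of a lens. This step uses the strict convexity condition~\refeq{strict_xi} essentially, and amounts to comparing the two different rhombus/semi-rhombus structures that $K^\circ$ and $K^\bullet$ impose on the common colours $\{1,\ldots,n-1\}$.
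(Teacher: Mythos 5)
This theorem is stated in the paper as a citation to~\cite{DKK3}; the paper does not prove it, so there is no in-paper argument to compare yours against. Evaluating your proposal on its own merits, the construction for the converse direction has a genuine gap. You claim that for a maximal w-collection $W\subset 2^{[n]}$, the sets $W^\circ:=\{A\in W:n\notin A\}$ and $\{A-n:A\in W^\bullet\}$ are both maximal w-collections on $[n-1]$, attributing this to ``a standard argument based on purity.'' This is false. Take $W$ to be the spectrum of the anti-standard tiling $\Tant_4$, i.e.\ the eleven co-intervals of $[4]$. Then $W^\circ=\{\emptyset,1,12,123\}$, which has size $4$, while the w-rank of $2^{[3]}$ is $7$. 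So $W^\circ$ is far from maximal, your induction hypothesis does not apply, and there is no combi $K^\circ$ to stack. The quantity that actually is a maximal w-collection on $[n-1]$ is the \emph{union} $W'=\{A\subseteq[n-1]:A\in W\ \text{or}\ An\in W\}$ (this is exactly what the present paper uses at~\refeq{reduc_tau}), and the inductive step then has to be organised around a contraction/expansion of the combi along the $n$-strip, not around splitting the spectrum by membership of $n$.

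Two smaller issues worth flagging: injectivity of $K\mapsto V_K$ is not ``essentially free'' --- recovering the edge set and the tile decomposition from the point set is precisely the nontrivial content of the uniqueness half of the theorem, so it cannot be waved away; and the tile-by-tile ``sweep'' argument for weak separation of $V_K$ is not a valid localisation. Weak separation of a pair $(X,X')$ is a global condition witnessed by a triple or quadruple of elements, and there is no reason a forbidden configuration between distant vertices must appear inside the vertex set of a single tile somewhere along a connecting path; the standard proofs for rhombus tilings and combies instead use separating curves/dual paths and a global counting or ordering argument. The Euler-formula count of $|V_K|$ is plausible but is also left entirely at the level of intent: the tile inventory of a combi is not fixed (the numbers of lenses, $\Delta$- and $\nabla$-tiles vary), so one needs an actual invariance argument, not just a gesture at bookkeeping.
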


In particular, each maximal w-collection $W$ determines a unique combi $K$ with
$V_K=W$, and~\cite{DKK3} explains how to construct this $K$ efficiently.

By results in~\cite{DKK1,DKK2,DKK3}, the set ${\bf K}_n$ forms a poset in which
$\Tst_n$ and $\Tant_n$ are the unique minimal and maximal elements,
respectively, and a combi $K$ immediately precedes a combi $K'$ if $K'$ is
obtained from $K$ by one \emph{weak raising flip} (and accordingly $K$ is
obtained from $K'$ by one \emph{weak lowering flip}). This means that
  \begin{numitem1} \label{eq:weak_flip}
there exist $i<j<k$ and $X\subseteq [n]-\{i,j,k\}$ such that: $K$ contains the
vertices $Xi,Xj,Xk,Xij,Xjk$, and the set $V_{K'}$ is obtained from $V_K$ by
replacing $Xj$ by $Xik$.
  \end{numitem1}
(Using terminology of~\cite{LZ}, one says that $V_K$ and $V_{K'}$ are linked by a
``mutation in the presence of four witnesses'', namely, $Xi,Xk,Xij,Xjk$.)

%----------------------- Subsec. 2.3

\subsection{Cubillages}  \label{ssec:cubil}
Now we deal with the zonotope generated by a special cyclic configuration
$\Theta$ of vectors in the space $\Rset^3$ with coordinates $(x,y,z)$. It
consists of $n$ vectors $\theta_i=(x_i,y_i,1)$, $i=1,\ldots,n$, with the
following strict convexity condition:
  \begin{numitem1} \label{eq:cyc_conf}
$x_1<x_2<\cdots<x_n$, and each $(x_i,y_i)$ is a vertex of the convex hull $H$
of the points $(x_1,y_1),\ldots,(x_n,y_n)$ in the plane $z=1$.
  \end{numitem1}

\noindent An example with $n=5$ is illustrated in the picture (where
$y_i=x_i^2$ and $x_i=-x_{6-i}$).

  \vspace{0cm}
\begin{center}
\includegraphics{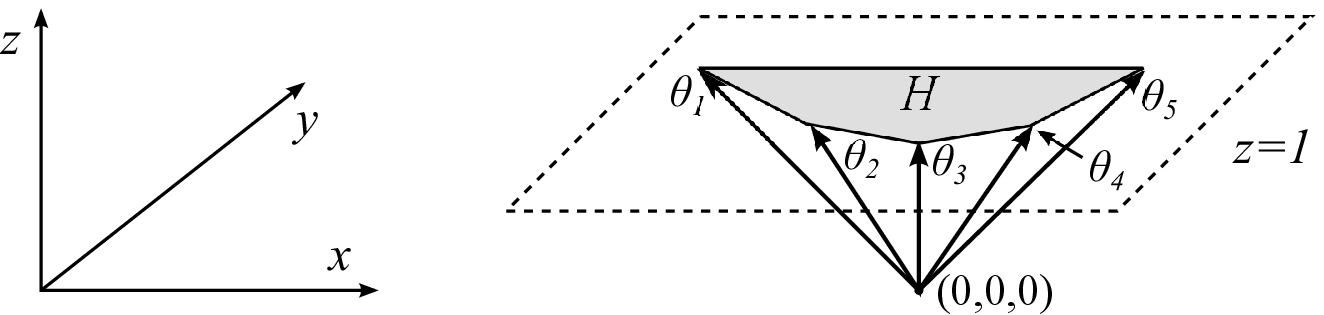}
\end{center}
\vspace{0cm}

The \emph{zonotope} $Z$ generated by $\Theta$, also denoted as $Z(n,3)$, is the
sum of segments $[0,\theta_i]$, $i=1,\ldots,n$. A \emph{fine zonotopal tiling}
of $Z$ is a subdivision  $Q$ of $Z$ into parallelotopes of which any two can
intersect only by a common face and any face of the boundary of $Z$ is a one of
some parallelotope. This is possible only if each parallelotope is of the form
$\sum_{i\in X}\theta_i +\{\lambda\theta_i+\lambda'\theta_j+
\lambda''\theta_k\colon 0\le \lambda,\lambda',\lambda''\le 1\}$ for some
$i<j<k$ and $X\subseteq [n]-\{i,j,k\}$. (For more aspects of fine zonotopal
tilings on zonotopes generated by cyclic configurations, see, e.g.,
\cite{GP}.) For brevity, we liberally refer to parallelotopes as \emph{cubes},
and to $Q$ as a \emph{cubillage}.

Depending on the context, we also may think of a cubillage $Q$ as a polyhedral
complex or as the corresponding set of cubes. In particular, (in the former
case) by a vertex, edge, rhombus in $Q$ we mean, respectively, (the closure of)
a 0-, 1-, 2-dimensional cell of this complex, and (in the latter case) when
writing $\zeta\in Q$, we mean that $\zeta$ is a cube of $Q$.

Like the case of zonogons and rhombus tilings, each subset $X\subseteq [n]$ is
identified with the point $\sum_{i\in X}\theta_i$ in $Z$ (assuming that
generators $\theta_i$ are $\Zset$-independent), and we apply to an edge,
rhombus, and cube in $Q$ terms an $i$-\emph{edge}, $ij$-\emph{rhombus}, and
$ijk$-\emph{cube} in a due way (where $i<j<k$). Also we say that such a rhombus
(cube) is of \emph{type} $ij$ (resp. $ijk$). The edges are directed according
to the generating vectors. An $ij$-rhombus ($ijk$-cube) with the bottom vertex
$X$ is denoted as $\rho(X|ij)$ (resp. $\zeta(X|ijk)$). As a specialization to
$d=3$ of a well-known fact about fine zonotopal tilings on zonotopes $Z(n,d)$
generated by cyclic vector configurations in $\Rset^d$ with an arbitrary
dimension $d$, the following is true:
   \begin{numitem1} \label{eq: one_ijk}
for any $1\le i<j<k\le n$, a cubillage $Q$ has exactly one $ijk$-cube.
  \end{numitem1}

The directed graph formed by the vertices and edges occurring in $Q$ is denoted
by $G_Q=(V_Q,E_Q)$ and we call the vertex set $V_Q$ regarded as a set-system in
$2^{[n]}$ the \emph{spectrum} of $Q$. The following property is of most
importance for us.
 \begin{theorem} {\rm\cite{gal}} \label{tm:galash}
The correspondence $Q\mapsto V_Q$ gives a bijection between the set ${\bf Q}_n$
of cubillages on $Z(n,3)$ and the set ${\bf C}_n$ of maximal c-collections in
$2^{[n]}$.
 \end{theorem}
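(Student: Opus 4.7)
The plan is to split the theorem into four ingredients: (i) $V_Q$ is always chord-separated; (ii) $|V_Q|=\binom{n}{3}+\binom{n}{2}+n+1$; (iii) the map $Q\mapsto V_Q$ is injective; (iv) every maximal c-collection arises as some $V_Q$. Parts (i)--(iii) should follow fairly quickly from (2.4) and (2.5), while (iv) is the substantive geometric step.

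For (i), I argue contrapositively: if $X,Y\in V_Q$ violated chord separation via some $i<j<k<\ell$ with $\{i,k\}\subseteq X-Y$ and $\{j,\ell\}\subseteq Y-X$, the restriction of $Q$ to the sub-zonotope spanned by $\theta_i,\theta_j,\theta_k,\theta_\ell$ (combinatorially a $Z(4,3)$) would itself be a cubillage containing $X,Y$ as vertices, contradicting a short direct inspection of the cubillages of $Z(4,3)$. For (ii), (2.4) furnishes $\binom{n}{3}$ cubes; a count of rhombi, edges, and vertices on $\partial Z(n,3)$ (two rhombus tilings of $Z(n,2)$ glued along their common boundary, to which Theorem \ref{tm:LZ} applies) together with Euler's formula for the $3$-ball $Z(n,3)$ yields the stated vertex count. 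By c-purity (2.5), $V_Q$ is then a maximal c-collection. Injectivity (iii) is immediate: $G_Q$ is reconstructible from $V_Q$ by drawing an $i$-edge from $X$ to $Xi$ whenever both belong to $V_Q$, after which (2.4) pins down each $ijk$-cube uniquely.

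The main obstacle is (iv). I would proceed by induction on $n$, with $n\le 3$ as the base. Given a maximal c-collection $C\subseteq 2^{[n]}$, set $C^-:=\{X\in C : n\notin X\}$ and $C^+:=\{X-\{n\} : X\in C,\ n\in X\}$. A preliminary sublemma, provable from the chord-separation axiom together with the cardinality comparison between $[n]$ and $[n-1]$ in (2.5), asserts that both $C^-$ and $C^+$ are maximal c-collections in $2^{[n-1]}$. Induction then produces cubillages $Q^-,Q^+$ of $Z(n-1,3)$. The crux is the geometric gluing: $Q^-$ is placed on the face of $Z(n,3)$ where the $\theta_n$-coefficient equals $0$, the translate $\theta_n+Q^+$ on the opposite face, and the intermediate slab must be filled by $ijn$-cubes ($i<j<n$) consistent with $C$. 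This amounts to verifying that the rhombus-tiling structures of $Q^-$ and $Q^+$ admit a common coarsening whose rhombi are precisely the footprints of the required $ijn$-cubes; the chord-separation condition for $C$ is what forces exactly this compatibility, and this is the technical heart of the argument. Once $Q$ is assembled, $V_Q\supseteq C$, and equality is forced by (ii), closing the induction.
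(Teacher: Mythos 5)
The paper does not prove this theorem; it is imported wholesale from Galashin~\cite{gal} as a background fact (Theorem~\THEO{galash}), so there is no proof in the paper against which to match your route. Evaluating the proposal on its own terms, the critical difficulty is in step~(iv).

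Your preliminary sublemma --- that both $C^-=\{X\in C: n\notin X\}$ and $C^+=\{X-n: X\in C,\ n\in X\}$ are maximal c-collections in $2^{[n-1]}$ --- is false for $n\ge 4$, and this can be seen already from the cardinality count you invoke. We have $|C^-|+|C^+|=|C|=\binom{n}{3}+\binom{n}{2}+n+1$, whereas two maximal c-collections in $2^{[n-1]}$ would contribute $2\bigl(\binom{n-1}{3}+\binom{n-1}{2}+n\bigr)$; the difference is $\binom{n-1}{3}>0$. Concretely, for $n=4$ take $C=V_Q$ for the cubillage of Lemma~\ref{lm:example} containing $13$: then $C^+$ has only $7$ elements and misses $2$, so it is not the (unique) maximal c-collection $2^{[3]}$. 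The correct decomposition, which underlies the $n$-contraction/expansion machinery of Section~\SSEC{pies}, is that $C^-\cup C^+$ is a maximal c-collection in $2^{[n-1]}$ and $C^-\cap C^+$ is a maximal \emph{s-collection} (the spectrum of the $n$-pie's image as an s-membrane). Accordingly the geometric picture is not two separate cubillages $Q^-,Q^+$ of $Z(n-1,3)$ glued across a slab, but a single cubillage $Q'$ of $Z(n-1,3)$ with a chosen s-membrane $M$, split by $M$ into $Q^-(M)$ and $Q^+(M)$ and re-expanded as in Proposition~\ref{pr:contr-exp}; the ``rhombi are footprints of the $ijn$-cubes'' compatibility condition you gesture at is exactly the requirement that $M$ be an s-membrane of $Q'$. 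Proving that $C^-\cup C^+$ is maximal c-separated and $C^-\cap C^+$ is maximal s-separated (given only that $C$ is maximal c-separated), and that the latter yields an s-membrane in the cubillage for the former, is the substance that your proposal is missing; it cannot be patched by replacing the false sublemma without essentially rebuilding the argument along these contraction/expansion lines.

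Step~(i) is also thinner than it reads. The phrase ``the restriction of $Q$ to the sub-zonotope spanned by $\theta_i,\theta_j,\theta_k,\theta_\ell$ is itself a cubillage'' is not an evident operation: the naive intersection of $Q$ with that $Z(4,3)$ is not a subdivision of it. What you need is iterated deletion (contraction) down to the four-element ground set, verified to produce a fine zonotopal tiling whose spectrum is $\{X\cap\{i,j,k,\ell\}: X\in V_Q\}$; for the top and bottom colors this is the $n$- and $1$-contraction of Section~\SSEC{pies}, but for intermediate colors it requires a separate argument (cf.\ Remark~2, which flags exactly this). Steps~(ii) and~(iii) are plausible in outline --- Euler's formula with care, and the edge/rhombus/cube reconstruction of Proposition~\ref{pr:edge_rh_cube} for injectivity --- but (iii) leans on a proposition that itself uses the contraction machinery, so the logical order should be checked against the ultimate source.
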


Next, in our study of interrelations of s- and c-collections, we will use the
projection $\pi:\Rset^3\to\Rset^2$ along the second coordinate vector, i.e.,
given by $\pi(x,y,z):=(x,z)$. Then $\pi(Z)$ is the zonogon generated by the
vectors $\pi(\theta_1),\ldots, \pi(\theta_n)$ (which lie in the ``upper
half-plane'' and are numbered clockwise, in view of~\refeq{cyc_conf}). Let us
represent the boundary $\bd(Z)$ of $Z$ as the union $\Zfr\cup \Zrear$ of its
\emph{front} and \emph{rear} sides, i.e., $\Zfr$ ($\Zrear$) is formed by the
points $(x,y,z)\in Z$ with $y$ minimal (resp. maximal) in $\pi^{-1}(x,z)$. Then
$\Zrim:=\Zfr\cap \Zrear$ is the closed piecewise linear curve being the union
of two directed paths connecting the vertices $\emptyset$
and $[n]$ in $G_Q$. We call $\Zrim$ the \emph{rim} of $Z$.
Condition~\refeq{cyc_conf} provides that
  \begin{numitem1} \label{eq:fr-rear}
the maximal affine sets in $\Zfr$ and $\Zrear$ are rhombi which are projected
by $\pi$ to the standard and antistandard tilings in $\pi(Z)$, respectively
(defined in Sect.~\SSEC{rhomb_til}).
  \end{numitem1}

We identify $\Zfr$ and $\Zrear$ with the corresponding polyhedral complexes.

Finally, for $h=0,1,\ldots,n$, the intersection of $Z$ with the horizontal
plane $z=h$ is denoted as $\Sigma_h$ and called $h$-th \emph{section} of
$Z$; the definition of $\Theta$ implies that $\Sigma_h$ contains all vertices
$X$ of size $|X|=h$ in $Q$.

%----------------------- Sec. 3

\section{S-membranes} \label{sec:smembr}

This section starts with an example of cubillages whose spectra are neither
s-pure nor w-pure. Then we consider a fixed cubillage $Q$ on $Z(n,3)$,
introduce a class of 2-dimensional subcomplexes in it, called
\emph{s-membranes}, explain that each of them is isomorphic to a rhombus tiling
$T$ on $Z(n,2)$ such that $V_T\subset V_Q$, and vice versa (thus obtaining a
geometric description of $\Sbold^\ast(V_Q)$), and demonstrate some other
structural properties.

%----------------------- Subsec. 3.1

\subsection{An example}  \label{ssec:example}

Consider the zonotope $Z=Z(4,3)$. The vertices of its boundary $\bd(Z)$ are the
intervals and co-intervals on the set $[4]$ (cf.~\refeq{fr-rear}), and there
are exactly two subsets of $[4]$ that are neither intervals nor co-intervals,
namely, $13$ and $24$. So $13$ and $24$ are just those ``points'' from
$2^{[4]}$ that are contained in the interior of $Z$. Since they are not chord
separated, there are exactly two cubillages on $Z$: one containing $13$ and the
other containing $24$ (taking into account that the vertices of $\bd(Z)$ belong
to any cubillage and that each cubillage is determined by its spectrum, by
Theorem~\ref{tm:galash}).
  \begin{lemma} \label{lm:example}
For the cubillage $Q$ on $Z(4,3)$ that contains $13$, the set $V_Q$ is neither
s-pure nor w-pure.
  \end{lemma}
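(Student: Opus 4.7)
The plan is to identify $V_Q$ explicitly as a concrete $15$-element subset of $2^{[4]}$ and then exhibit, inside it, two maximal-by-inclusion s-collections (and, separately, two w-collections) having different cardinalities.

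\emph{Identifying $V_Q$.} By~\refeq{fr-rear}, the vertices of $\bd(Z(4,3))$ are precisely the intervals and co-intervals of $[4]$, which together form $2^{[4]}\setminus\{13,24\}$ (fourteen sets). The c-rank formula~\refeq{c-pure} forces $|V_Q|=\binom{4}{3}+\binom{4}{2}+\binom{4}{1}+\binom{4}{0}=15$, and since $13\in V_Q$ by hypothesis, one concludes $V_Q=2^{[4]}\setminus\{24\}$.

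\emph{Failure of s-purity.} The spectrum of $\Tst_4$, namely the eleven intervals of $[4]$, lies inside $V_Q$ and attains the global s-rank $\binom{4}{2}+\binom{4}{1}+\binom{4}{0}=11$, so it is a maximal s-collection of size $11$. For a smaller witness I would take
$$\Fscr:=\{\emptyset,\,1,\,4,\,12,\,34,\,123,\,234,\,1234\}\cup\{2,\,124\},$$
of size $10$, and perform two verifications. First, the ten elements of $\Fscr$ are pairwise strongly separated: the eight sets $\emptyset,1,4,12,34,123,234,1234$ are easily seen to be strongly separated from every set in $V_Q$, and one directly checks $2\lessdot 124$. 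Second, each $A\in V_Q\setminus\Fscr=\{3,13,14,23,134\}$ has a blocker $B\in\Fscr$ witnessing failure of strong separation, namely the pairs
$$(3,124),\ (13,2),\ (14,2),\ (23,124),\ (134,2).$$
In each pair, inspection of $A-B$ and $B-A$ shows that neither $A\lessdot B$ nor $B\lessdot A$ holds; e.g.\ for $(3,124)$, $A-B=\{3\}$ and $B-A=\{1,2,4\}$. Thus $\Fscr$ is a maximal s-collection in $V_Q$ of size $10<11$, and $V_Q$ is not s-pure.

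\emph{Failure of w-purity.} Since every s-collection is a w-collection, both $V_{\Tst_4}$ (size $11$) and $\Fscr$ (size $10$) are automatically w-collections in $V_Q$; only maximality of $\Fscr$ as a w-collection needs verification. For this I would revisit the same five blocker pairs and exhibit, in each, a triple $i<j<k$ witnessing failure of the surrounding condition from the smaller set to the larger one: for $(3,124)$ the triple $(i,j,k)=(1,3,4)$ satisfies $1,4\in 124-3$ and $3\in 3-124$; for $(13,2)$, $(14,2)$, $(23,124)$, $(134,2)$ the analogous triples are $(1,2,3)$, $(1,2,4)$, $(1,3,4)$, $(1,2,3)$, respectively. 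Hence no element of $V_Q\setminus\Fscr$ is weakly separated from every member of $\Fscr$, so $\Fscr$ is a maximal w-collection of size $10$, and $V_Q$ is not w-pure. The only non-mechanical step in the whole argument is guessing the collection $\Fscr$; once it is written down, every remaining verification is a short finite check.
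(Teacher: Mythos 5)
Your proof is correct and follows essentially the same route as the paper's: the witness collection $\Fscr$ is exactly the paper's $S=R\cup\{2,124\}$ (the rim vertices plus $2$ and $124$), the observation that rim vertices are strongly separated from everything is the same, and the five blocker pairs you list coincide with the paper's checks ($3,23$ against $124$ and $14,134,13$ against $2$). The only cosmetic difference is how you identify $V_Q$: you count via the c-rank formula to get $|V_Q|=15$ and deduce $V_Q=2^{[4]}\setminus\{24\}$, whereas the paper assembles $V_Q$ directly as $V_1\cup V_2\cup\{13\}$; both are valid and give the same set.
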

  \begin{proof}
Let $R,V_1,V_2$ be the vertices in the rim, front side, and rear side of
$Z(4,3)$, respectively (for definitions, see the end of Sect.~\SSEC{cubil}).
Then $R$ consists of the eight intervals of the form $[i]$ or $[4]-[i]$ ($0\le
i\le 4$); $V_1$ is $R$ plus the intervals $2,3,23$; and $V_2$ is $R$ plus the
co-intervals $14, 124, 134$. Note that the vertices (intervals) of the rim of
any zonotope $Z(n,3)$ are strongly separated from any subset of $[n]$.

Consider the set $S:=R\cup\{2,124\}$. It is a subset  of
  $$
  V_Q=V_1\cup V_2\cup \{13\}= R\cup\{2,3,23,14,124,134,13\}.
  $$
Observe that $S$ is an s-collection (since $2\subset 124$) but not an
s-collection of maximum size in $2^{[4]}$ (since $|S|=10$ but $|V_1|=11$). We
have $3,23\rhd 124$ but $|3|,|23|<|124|$, and $2\rhd 14,134,13$ but $|2|<
|14|,|134|,|13|$. Thus, $S$ is a maximal s-collection and a maximal
w-collection in $V_Q$, yielding the result.
  \end{proof}

In fact, using results on s- and w-membranes given later, one can strengthen
the above lemma by showing that for $n\ge 4$, the spectrum $V_Q$ of \emph{any}
cubillage $Q$ on $Z(n,3)$ is neither s-pure nor w-pure (we omit a proof here).

%-----------------------------ssec 3.2
\subsection{S-membranes}  \label{ssec:smembr}

Like the definitions of $\Zfr$ and $\Zrear$ from Sect.~\SSEC{cubil}, for
$S\subseteq Z=Z(n,3)$, let $\Sfr$ ($\Srear$) denote the set of points
$(x,y,z)\in S$ with $y$ minimum (resp. maximum) for each $\pi^{-1}(x,z)$,
called the \emph{front} (resp. \emph{rear}) part of $S$. (In other words,
$\Sfr$ and $\Srear$ are what is seen in $S$ in the directions $(0,1,0)$ and
$(0,-1,0)$, respectively.) This is extended in a natural way when we deal with
a subcomplex of a cubillage in $Z$.
 \medskip

\noindent \emph{Example.} In view of~\refeq{cyc_conf}, for a cube
$\zeta=\zeta(X|ijk)$ (where $i<j<k)$), ~$\zetafr$ is formed by the rhombi
$\rho(X|ij), \rho(X|ji), \rho(Xj|ik)$, while $\zetarear$ is formed by
$\rho(X|ik), \rho(Xi|jk), \rho(Xk|ij)$. See the picture.

  \vspace{0cm}
\begin{center}
\includegraphics[scale=0.9]{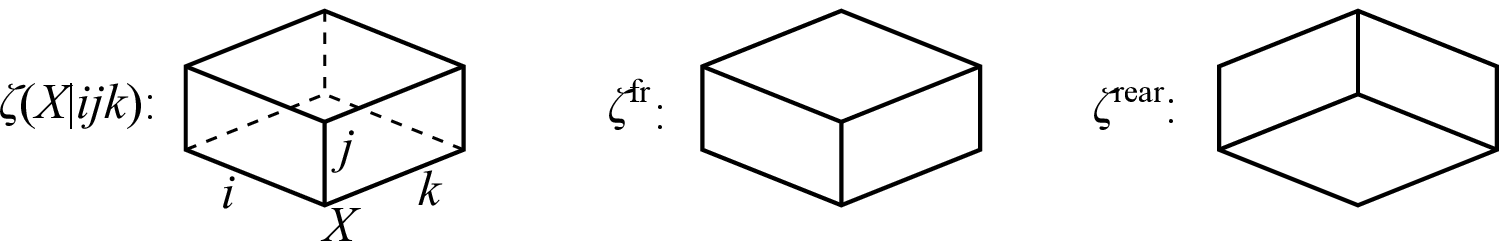}
\end{center}
\vspace{0cm}

\noindent\textbf{Definition.} A 2-dimensional subcomplex $M$ of a cubillage $Q$
is called an \emph{s-membrane} if $\pi$ is injective on $M$ and sends it
to a rhombus tiling on the zonogon $Z(n,2)$. In other words, $M$ is a disk
(i.e., a shape homeomorphic to a circle) whose boundary coincides with $\Zrim$
and such that $M=\Mfr$.
 \medskip

In particular, both $\Zfr$ and $\Zrear$ are s-membranes. Therefore, up to a
piecewise linear deformation, we may think of $M$ as a rhombus tiling whose
spectrum is contained in $V_Q$. So the vertex set $V_M$ of $M$ belongs to ${\bf
S}^\ast(V_Q)$. Moreover, a sharper property takes place (which can be deduced
from general results on higher Bruhat orders and related aspects
in~\cite{MS,VK,zieg}; yet we prefer to give a direct and shorter proof).

\begin{theorem} \label{tm:smembr-scoll}
The correspondence $M\mapsto V_M$ gives a bijection between the s-membranes in
a cubillage $Q$ on $Z(n,3)$ and the set ${\bf S}^\ast(V_Q)$ of maximum by size
s-collections contained in $V_Q$.
 \end{theorem}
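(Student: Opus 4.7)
First I would verify that $M \mapsto V_M$ is a well-defined injection into $\Sbold^\ast(V_Q)$. If $M$ is an s-membrane, then by definition $\pi$ restricts to a homeomorphism of $M$ onto a rhombus tiling $T := \pi(M)$ of $Z(n,2)$, inducing a bijection $V_M \leftrightarrow V_T$. By Theorem~\ref{tm:LZ} together with (\ref{eq:s-pure}), $V_T$ is strongly separated and has the maximum size $\binom{n}{2}+n+1$, so $V_M \subseteq V_Q$ inherits both properties, yielding $V_M \in \Sbold^\ast(V_Q)$. Injectivity is immediate: $T$ is recovered from $V_T = \pi(V_M)$ via Theorem~\ref{tm:LZ}, and $M$ is then recovered by lifting each rhombus of $T$ through the identification $X \mapsto \hat X := \sum_{i\in X}\theta_i$.

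For surjectivity, given $S \in \Sbold^\ast(V_Q)$ I would invoke Theorem~\ref{tm:LZ} to obtain the unique rhombus tiling $T = T(S)$ on $Z(n,2)$ with $V_T = S$, and then build a candidate membrane $M$ by lifting: for each $ij$-rhombus $\rho(X|ij) \in T$, let $\hat\rho(X|ij)$ be the parallelogram in $Z(n,3)$ spanned by $\hat X, \hat X+\theta_i, \hat X+\theta_j, \hat X+\theta_i+\theta_j$, and set $M := \bigcup_\rho \hat\rho$. Since $\pi$ sends each $\hat\rho$ bijectively back to $\rho$ and $T$ tiles $Z(n,2)$, the restriction $\pi|_M$ is a homeomorphism onto $Z(n,2)$; consequently $M$ is a PL $2$-disk with $\Mfr = M$, boundary equal to $\Zrim$, and $V_M = S$. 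The remaining task is to show that $M$ is genuinely a subcomplex of $Q$.

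The main obstacle is the claim that whenever $X, Xi, Xj, Xij$ all lie in $V_Q$ and form a rhombus of $T(S)$, the lifted parallelogram $\hat\rho(X|ij)$ is a $2$-cell of $Q$. My approach would be a cube-by-cube analysis using~(\ref{eq: one_ijk}): for each triple $i<j<k$, the unique cube $\zeta = \zeta(Y|ijk) \in Q$ has two ``interior-projecting'' vertices $Yj, Yik \in V_Q$ that are not strongly separated (since $\{j\}$ and $\{i,k\}$ interleave). Hence $S$ omits at least one of $\{Yj,Yik\}$ per cube; combined with the size identity $|V_Q \setminus S| = \binom{n}{3}$ matching the total number of cubes, I would argue that $S$ contains exactly one of $\{Yj, Yik\}$ for every cube. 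This splits $Q = Q^- \sqcup Q^+$ according to which interior vertex $S$ retains, and the interface between $\bigcup Q^-$ and $\bigcup Q^+$ is assembled from the front triples of $Q^-$-cubes glued to the rear triples of $Q^+$-cubes---all honest rhombi of $Q$---giving a subcomplex that can then be identified with $M$.

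The subtlest step I expect is controlling $T(S)$ locally on each projected hexagon $\pi(\zeta)$: one needs the six hexagonal edges of $\pi(\zeta)$ to be realized as $T(S)$-edges, so that $T(S)|_{\pi(\zeta)}$ is forced to be one of the two standard hexagonal tilings (Y or $\Lambda$), which in turn pins down which of $Yj, Yik$ lies in $S$ and rules out the case in which both are omitted simultaneously. Handling this likely requires exploiting strong separation of $S$ against the six boundary vertices of $\pi(\zeta)$ in $V_Q$, together with the counting identity above, to exclude exotic local configurations.
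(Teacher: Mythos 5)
Your treatment of well-definedness and injectivity is sound and essentially matches the paper. The problem is the surjectivity step, whose central intermediate claim is false. You assert that for every cube $\zeta(Y|ijk)$ of $Q$, the collection $S$ must contain exactly one of the two non-strongly-separated vertices $Yj$, $Yik$, reasoning that $|V_Q\setminus S|=\binom{n}{3}$ equals the number of cubes. But the map ``cube $\mapsto$ omitted interior vertex'' need not be injective, because a single vertex of $Q$ can be an interior vertex of several cubes simultaneously; so the count does not force one omission per cube. Concretely, take $n=4$ and the cubillage $Q$ containing $13$ (as in Sect.~\SSEC{example}): its four cubes are $\zeta(\emptyset|123)$, $\zeta(\emptyset|134)$, $\zeta(3|124)$, $\zeta(1|234)$, with interior pairs $\{2,\,13\}$, $\{3,\,14\}$, $\{23,\,134\}$, $\{13,\,124\}$, respectively. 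Taking $S=V_{\Zfr}$, the set of intervals, one finds $V_Q\setminus S=\{13,14,124,134\}$ has size $4=\binom{4}{3}$ as expected, yet the cube $\zeta(1|234)$ misses \emph{both} of its interior vertices $13$ and $124$. (Here $13$ is an interior vertex for two different cubes, which is exactly what breaks your counting.) So the decomposition $Q=Q^-\sqcup Q^+$ by ``which interior vertex $S$ retains'' is not well defined, and the interface you would build from it does not exist.

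The paper's route is genuinely different and avoids this obstruction entirely. It reduces the problem to a local lemma (Proposition~\ref{pr:edge_rh_cube}): if $X,Xi,Xj,Xij\in V_Q$, then $Q$ actually contains the rhombus $\rho(X|ij)$ as a $2$-cell. This is proved by induction on $n$ using the $n$-contraction operation on cubillages (Sect.~\SSEC{pies}), splitting $Q$ into $Q_n^-$, the $n$-pie $\Pi_n$, and $Q_n^+$, and tracking where the four vertices land after contraction. Once the local lemma is in hand, surjectivity is immediate: each rhombus of the tiling $T(S)$ lifts to a rhombus of $Q$, and these glue into the required s-membrane, with no global ``one interior vertex per cube'' bookkeeping needed. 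If you want to pursue a cube-level accounting argument, you would first have to characterize \emph{which} cubes have one of their interior vertices on the membrane (roughly, only those cubes that $M$ is incident to) rather than asserting it for all cubes.
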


In light of explanations above, it suffices to prove the following
  \begin{prop} \label{pr:T-smembr}
For any rhombus tiling $T$ on $Z(n,2)$ with $V_T\subset V_Q$, there exists an
s-membrane $M$ in $Q$ isomorphic to $T$.
  \end{prop}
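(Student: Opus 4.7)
The natural candidate for $M$ is obtained by direct lifting: for each rhombus $\rho(X|ij) \in T$, form the planar rhombus $\tilde{\rho}(X|ij) \subset \Rset^3$ whose four corners are the points $X$, $Xi$, $Xj$, $Xij$, all of which belong to $V_Q$ by hypothesis, and set $M := \bigcup_{\rho \in T}\tilde{\rho}$. Since $\pi$ sends each $\tilde{\rho}(X|ij)$ affinely and bijectively onto $\rho(X|ij)$, the union $M$ automatically projects bijectively onto $T$, forms a topological disk with boundary $\Zrim$, and satisfies $M = \Mfr$. The real content of the proposition is that $M$ is a genuine subcomplex of $Q$: every lifted edge $X \to Xi$ must be an actual $1$-cell of $Q$, and every lifted rhombus $\tilde{\rho}(X|ij)$ an actual $2$-cell of $Q$. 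Once this is verified, $M$ will be an s-membrane by definition and $V_M = V_T$ will be an element of $\Sbold^\ast(V_Q)$, completing also the surjectivity part of Theorem~\THEO{smembr-scoll}.

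My plan to establish the subcomplex property is induction on $n$, using the explicit transformations between cubillages on $Z(n,3)$ and on $Z(n-1,3)$ to be developed in Section~\SEC{smembr}. The base case $n=3$ is immediate, since $Z(3,3)$ is a single cube and the only two rhombus tilings of $Z(3,2)$ are the projections of its front and rear halves. For the inductive step, delete the last generator $\theta_n$: this collapses $Q$ to a cubillage $Q'$ on $Z(n-1,3)$ and correspondingly reduces $T$ by stripping off the ``strip'' of rhombi whose type involves $n$, producing a rhombus tiling $T'$ on $Z(n-1,2)$. The inclusion $V_T \subseteq V_Q$ descends compatibly to $V_{T'} \subseteq V_{Q'}$, and the inductive hypothesis furnishes an s-membrane $M' \subseteq Q'$ with $\pi(M') = T'$. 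Reversing the deletion, by re-inserting the slab of $ijn$-cubes of $Q$ (for $i<j<n$), extends $M'$ by attaching the lifts of precisely those $in$-rhombi of $T$ that had been stripped off, producing the desired s-membrane $M$ of $Q$ with $\pi(M) = T$.

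The main technical obstacle lies in the re-insertion step: one must verify that each $in$-rhombus $\rho(X|in) \in T$ genuinely occurs as a $2$-face of some $ijn$-cube in $Q$, and that these newly-attached rhombi meet $M'$ along matching edges. For $\rho(X|in)$ to be a face of the unique $ijn$-cube $\zeta(Y_{ijn}|ijn) \in Q$, the parameters must satisfy $X = Y_{ijn}$ or $X = Y_{ijn} \cup \{j\}$ for at least one $j \in [n-1]-\{i\}$. Checking that the hypothesis $V_T \subseteq V_Q$ forces this compatibility uniformly, for every $in$-rhombus of $T$, is the genuine combinatorial content of the argument and relies on the explicit description of the deletion/insertion transformations in Section~\SEC{smembr}. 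Once the matching is in place at every step of the re-insertion, $M$ is a well-defined $2$-subcomplex of $Q$ projecting bijectively onto $T$, and hence an s-membrane isomorphic to $T$.
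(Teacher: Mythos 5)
Your first paragraph is exactly right and matches the paper's setup: the candidate $M$ is the union of the lifted rhombi $\tilde{\rho}(X|ij)$, and the whole content of the proposition reduces to the assertion that each such lifted rhombus (and each lifted edge) is an actual cell of $Q$. But that assertion is precisely what is \emph{not} proved in your proposal. The paper isolates it as a separate local lemma, Proposition~\ref{pr:edge_rh_cube}(ii): if $X$, $Xi$, $Xj$, $Xij$ all belong to $V_Q$ then $Q$ contains the rhombus $\rho(X|ij)$. That lemma is established by induction on $n$ via the $n$-contraction/expansion operations, with a three-way case split on whether $n\in X$, $j<n$, or $j=n$; once it is in hand, assembling $M$ from the lifted rhombi is immediate. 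Your proposal never states or proves such a lemma. Instead you observe that for $\rho(X|in)$ to be a face of $Q$ one needs $X=Y_{ijn}$ or $X=Y_{ijn}\cup\{j\}$ for the unique $ijn$-cube, call this ``the genuine combinatorial content,'' and then assert that it ``relies on the explicit description of the deletion/insertion transformations.'' That sentence does not discharge the obligation: it names the thing to be proved and appeals to structural machinery without supplying the argument. What remains unproved is, in effect, Proposition~\ref{pr:edge_rh_cube}(ii) itself.

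There is also a hidden alignment issue in the global induction you sketch which the paper's local formulation avoids. After contracting to $Q'$ and $T'$ and invoking the inductive hypothesis to obtain an s-membrane $M'\subset Q'$, the re-insertion must attach the stripped-off $in$-rhombi along the specific s-membrane $\Pi^-_n(Q)$ of $Q'$ determined by the $n$-pie of $Q$ — not along $M'$ itself, and not along an arbitrary s-membrane. You would therefore need to show that the decomposition of $T'$ into the images of $T^-$ and $T^+$ is coherent with the decomposition of $Q'$ into $Q^-_n$ and $Q^+_n-\theta_n$, i.e., that the rhombi of $M'$ coming from $T^-$ lie in $Q^-(\Pi^-_n)$ and those from $T^+$ in $Q^+(\Pi^-_n)$. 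This global coherence is not addressed in the sketch. The paper's route sidesteps it: one works one rhombus at a time, the case analysis on $n\in X$ versus $j=n$ is entirely local, and the assembled $M$ requires no compatibility check across tiles beyond the fact that $\pi$ restricted to each lifted rhombus is injective with image the corresponding tile of $T$.
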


This proposition will be proved in Sect.~\SSEC{appl_contr}, based on a more
detailed study of structural features of cubillages and operations on them
given in the next subsection.

%-----------------------------ssec 3.3
\subsection{Pies in a cubillage}  \label{ssec:pies}

Given a cubillage $Q$ on $Z=Z(n,3)$, let $\Pi_i=\Pi_i(Q)$ be the part of $Z$
covered by cubes of $Q$ having edges of color $i$, or, let us say,
$i$-\emph{cubes}. When it is not confusing, we also think of $\Pi_i$ as the set
of $i$-cubes or as the corresponding subcomplex of $Q$. We refer to $\Pi_i$ as
$i$-th \emph{pie} of $Q$. When $i=n$ or 1, the pie structure becomes rather
transparent, which will enable us to apply some useful reductions.

To clarify the structure of $\Pi_n$, we first consider the set $U$ of $n$-edges
lying in $\bd(Z)$. Since the tilings on the sides $\Zfr$ and $\Zrear$ of $Z$
are isomorphic to $\Tst_n$ and $\Tant_n$, respectively (cf.~\refeq{fr-rear}),
one can see that
   \begin{numitem1} \label{eq:C-Cp}
the beginning vertices of edges of $U$ are precisely those contained in the
cycle $C=P'\cup P''$, where $P'$ is the subpath of left path of $\Zrim$ from
the bottom vertex $\emptyset$ to $[n-1]$, and $P''$ is the path in $\Zfr$
passing the vertices $[n-1]-[i]$ for $i=n-1,n-2,\ldots,0$; in other words, $C$
is the rim of the zonotope $Z(n-1,3)$ generated by
$\theta_1,\ldots,\theta_{n-1}$.
  \end{numitem1}

\noindent Accordingly, the end vertices of edges of $U$ lie on the cycle
$C':=C+\theta_n$; this $C'$ is viewed as the rim of the zonotope $Z(n-1,3)$
shifted by $\theta_n$. The area of $\bd(Z)$ between $C$ and $C'$ is subdivided
into $2(n-2)$ rhombi of types $\ast n$ (where $\ast$ means an element of
$[n-1]$); we call this subdivision the \emph{belt} of $\Pi_n$. See the picture
with $n=5$.

  \vspace{0cm}
\begin{center}
\includegraphics[scale=1]{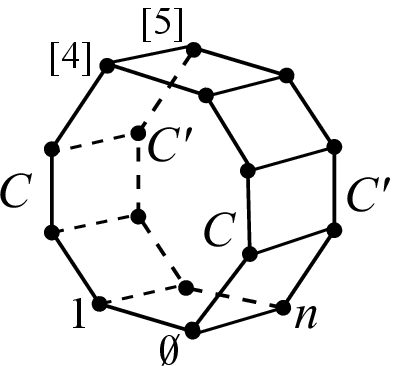}
\end{center}
\vspace{0cm}

Now fix an $n$-edge $e=(X,Xn)$ not on $\bd(Z)$ and consider the set $S$ of
cubes in $\Pi_n$ containing $e$. Each cube $\zeta\in S$ is viewed as the
(Minkowski) sum of some rhombus $\rho=\rho(X|ij)$ and the segment
$[0,\theta_n]$, and an important fact is that $\rho$ belongs to the front side
of $\zeta$ (in view of~\refeq{cyc_conf} and $n>i,j$). Gluing together such
rhombi $\rho$, we obtain a disk lying on the front side of the shape $\hat
S:=\cup(\zeta\in S)$ and containing $X$ as an interior point, and $\hat S$ is
just the sum of this disk and $[0,\theta_n]$. Based on this local behavior, one
can conclude that
  \begin{numitem1} \label{eq:Pin}
$\Pi_n$ is the sum of a disk $D$ and the segment $[0,\theta_n]$; this disk lies
in $\Pifr_n$ and its boundary is formed by the cycle $C$ as in~\refeq{C-Cp}.
  \end{numitem1}

Then $D':=D+\theta_i$ is a disk in $\Pirear_n$ whose boundary is the cycle $C'$
as above.

The facts that $D^{\rm fr}=D$ and $C=\Zrim(n-1,3)$ imply that $D$ is subdivided
into rhombi which (being projected by $\pi$) form a rhombus tiling on
$Z(n-1,2)$. And similarly for $D'$.

In what follows we write $\Pi^-_n$ for $D$, $\Pi^+_n$ for $D'$, $Z^-_n$
($Z^+_n$) for the (closed) subset of $Z$ between $\Zfr$ and $\Pi^-_n$ (resp.
between $\Pi^+_n$ and $\Zrear$), and $Q^-_n$ ($Q^+_n$) for the portion (\emph{partial
cubillage}) of $Q$ lying in $Z^-_n$ (resp. $Z^+_n$). One can see that
   \begin{numitem1} \label{eq:Z-Z+}
the edges of $G_Q$ connecting $Z^-_n$ and $Z^+_n$ are directed from the former
to the latter and are exactly the $n$-edges of $Q$; as a consequence, each
vertex of $Q^-_n$ is in $[n-1]$ and each vertex of $Q^+_n$ is of the form $Xn$,
where $X\subseteq [n-1]$.
  \end{numitem1}

The following operation is of importance.
\smallskip

\noindent\textbf{$n$-Contraction.} Delete the interior of $\Pi_n$ and shift
$Z^+_n$ together with the cubillage $Q^+_n$ filling it by the vector
$-\theta_n$. As a result, the disks $\Pi^-_n$ and $\Pi^+_n$ merge and we obtain
a cubillage on the zonotope $Z(n-1,3)$; it is denoted by $\Qcon_n$ and called
the \emph{contraction} of $Q$ by (the color) $n$.

Note that $\Pi^-_n$ becomes an s-membrane of $\Qcon_n$. Also the following is
obvious:
   \begin{numitem1} \label{eq:contr}
each cube $\zeta=\zeta(X|ijk)$ of $Q$ with $k<n$ (i.e. not contained in
$\Pi_n$) one-to-one corresponds to a cube $\zeta'$ of $\Qcon_n$; this $\zeta'$
is of the form $\zeta(X|ijk)$ if $\zeta\in Q^-_n$, and $\zeta(X-n|ijk)$ if
$\zeta\in Q^+_n$.
  \end{numitem1}

Next we introduce a converse operation.
\smallskip

\noindent\textbf{$n$-Expansion.} Let $M$ be an s-membrane in a cubillage $Q'$
on the zonogon $Z'=Z(n-1,3)$. Define $Z^-(M)$ ($Z^+(M)$) to be the part of
$Z'$ between $(Z')^{\rm fr}$ and $M$ (resp. between $M$ and $(Z')^{\rm re}$),
and define $Q^-(M)$ ($Q^+(M)$) to be the subcubillage of $Q'$ contained in
$Z^-(M)$ (resp. $Z^+(M)$). The $n$-\emph{expansion} operation for $(Q,M)$
consists in shifting $Z^+(M)$ together with $Q^+(M)$ by $\theta_n$ and filling
the space ``between'' $M$ and $M+\theta_n$ by the corresponding set of
$n$-cubes, denoted as $Q^0_n(M)$. More precisely, each rhombus $\rho(X|ij)$ in
$M$ (where $i<j<n$ and $X\subset [n-1]-\{i,j\}$) generates the cube
$\zeta(X|ijn)$ of $Q^0_n(M)$. A fragment of the operation is illustrated in the
picture.

  \vspace{0cm}
\begin{center}
\includegraphics[scale=1]{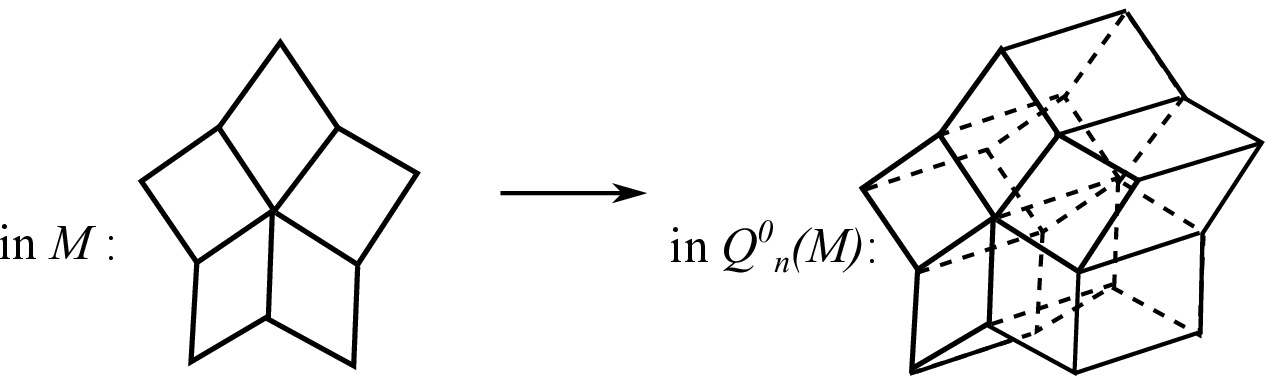}
\end{center}
\vspace{0cm}

Using the facts that $\Mfr=M$ and that the boundary cycle of $M$ is the rim of
$Z'$, one can see that
  \begin{numitem1} \label{eq:Q-Q+}
taken together, the sets of cubes in $Q^-(M)$, $Q^0_n(M)$ and
$\{\zeta+\theta_n\colon \zeta\in Q^+(M)\}$ form a cubillage in $Z=Z(n,3)$.
  \end{numitem1}

We denote this cubillage as $Q(M)=Q_n(Q',M)$ and call it the
$n$-\emph{expansion} of $Q'$ using $M$. There is a natural relation between the
$n$-contraction and $n$-expansion operations, as follows. (A proof is
straightforward and left to the reader as an exercise.)
  \begin{prop} \label{pr:contr-exp}
The correspondence $(Q',M)\mapsto Q(M)$, where $Q'$ is a cubillage on
$Z(n-1,3)$, $M$ is an s-membrane in $Q'$, and $Q(M)$ is the $n$-expansion of
$Q'$ using $M$, gives a bijection between the set of such pairs $(Q',M)$ in
$Z(n-1,3)$ and the set of cubillages in $Z(n,3)$. Under this correspondence,
$Q'$ is the $n$-contraction of $Q=Q(M)$ and $M$ is the image of the $n$-pie in
$Q$ under the $n$-contraction operation. \qed
  \end{prop}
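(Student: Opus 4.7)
The plan is to establish the proposition by showing that the $n$-contraction and $n$-expansion operations are mutually inverse, which will simultaneously give injectivity, surjectivity, and the claimed explicit description of the inverse map.

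First I would verify that the map $(Q',M)\mapsto Q(M)$ is well-defined, i.e.\ that $Q(M)$ really is a cubillage on $Z(n,3)$. This is essentially the content of~\refeq{Q-Q+}: the zonotope $Z(n,3)$ splits along the two parallel disks $M$ and $M+\theta_n$ into three regions (the region below $M$, the slab between $M$ and $M+\theta_n$, and the region above $M+\theta_n$), and the three respective partial cubillages $Q^-(M)$, $Q^0_n(M)$, and $\{\zeta+\theta_n\colon \zeta\in Q^+(M)\}$ together tile $Z(n,3)$ by cubes meeting along common faces. The only minor check here is that the rhombi of $M$ on the top of $Q^-(M)$ match the bases of the cubes in $Q^0_n(M)$, and similarly for $M+\theta_n$; both follow because each $\rho(X|ij)$ in $M$ is a face of both $\zeta(X|ijn)$ (from the slab) and of some cube in $Q^-(M)$ (since $M$ is an s-membrane of $Q'$), and analogously above.

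Next I would show that starting from any cubillage $Q$ on $Z(n,3)$, the $n$-contraction produces a pair $(Q',M)$ of the required type. By the structural analysis of $\Pi_n$ from Sect.~\SSEC{pies} (formulas~\refeq{C-Cp}--\refeq{Pin}), the $n$-pie $\Pi_n$ equals $\Pi_n^-+[0,\theta_n]$, where $\Pi_n^-$ is a disk subdivided into rhombi of types $ij$ with $i,j<n$, projecting under $\pi$ to a rhombus tiling of $Z(n-1,2)$, and whose boundary is the rim of $Z(n-1,3)$. Thus the image of $\Pi_n^-$ in $\Qcon_n$ is, by definition, an s-membrane $M$ of the contracted cubillage $Q'=\Qcon_n$, which is itself a cubillage on $Z(n-1,3)$ by the merging procedure described at the end of Sect.~\SSEC{pies}.

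Now I would check that the two operations invert each other. For $Q\mapsto \Qcon_n =Q'$ followed by $n$-expansion: since $\Pi_n^-$ corresponds to $M$, and the cubes of $\Pi_n$ are in bijection with the rhombi of $\Pi_n^-$ via $\rho(X|ij)\mapsto \zeta(X|ijn)$ (by~\refeq{contr} and the description of $\Pi_n$), applying the expansion at $M$ recreates exactly the slab $\Pi_n$ between $\Pi_n^-$ and $\Pi_n^+=\Pi_n^-+\theta_n$, and shifting $Q^+(M)$ by $+\theta_n$ restores $Q^+_n$. Thus $Q_n(\Qcon_n,M)=Q$. Conversely, starting from $(Q',M)$ and forming $Q=Q(M)$, the cubes of $Q$ containing an edge of color $n$ are precisely those in $Q^0_n(M)$ (the other two partial cubillages have no $n$-edges by construction), hence $\Pi_n(Q)=Q^0_n(M)$; its lower disk coincides with the copy of $M$ sitting in $Z(n,3)$, and its upper disk with $M+\theta_n$. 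Applying $n$-contraction then deletes $Q^0_n(M)$, shifts the top part back by $-\theta_n$, and glues the disks into the single membrane $M$ in $Q'$; so $\Qcon_n(Q(M))=Q'$ and the image of $\Pi_n^-$ is $M$, as required.

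The main conceptual obstacle in such a write-up is keeping straight the bookkeeping of which cubes/edges/vertices lie in each of $Z^-_n$, $\Pi_n$, and $Z^+_n$, and verifying that translations by $\pm\theta_n$ send face-identifications to face-identifications correctly; but all of this is handled uniformly by property~\refeq{Z-Z+}, which guarantees that $n$-edges are the only edges crossing between $Z^-_n$ and $Z^+_n$, so the gluing along the two parallel disks $M$ and $M+\theta_n$ is unambiguous. With these observations the bijection and the explicit description of the inverse follow immediately.
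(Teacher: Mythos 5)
Your proof is correct and fills in exactly the kind of verification the paper intends: the paper itself states that a proof is ``straightforward and left to the reader as an exercise,'' and the natural argument is precisely the one you give, namely checking that $n$-contraction and $n$-expansion are mutually inverse, relying on~\refeq{C-Cp}--\refeq{Q-Q+} for the structure of the $n$-pie and on~\refeq{Z-Z+} to confirm that the only cubes of $Q(M)$ with $n$-edges are those in $Q^0_n(M)$, so that $\Pi_n(Q(M))=Q^0_n(M)$ and the round-trip recovers $(Q',M)$. The bookkeeping you flag is the only real content, and you handle it correctly.
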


We will also take advantages from handling the 1-pie of a cubillage $Q$ on
$Z(n,3)$ and applying the corresponding \emph{1-contraction} and
\emph{1-expansion} operations, which are symmetric to those concerning the
color $n$ as above. More precisely, if we make a mirror reflection of $\Theta$
by replacing each generator $\theta_i=(x_i,y_i,1)$ by $(-x_i,y_i,1)$, denoted
as $\theta'_{n+1-i}$, then the 1-edges of $Q$ turn into $n$-edges of the
corresponding cubillage $Q'$ on $Z(\theta'_1,\ldots,\theta'_n)$, and the 1-pie
of $Q$ turns into the $n$-pie of $Q'$. This leads to the corresponding
counterparts of~\refeq{C-Cp}--\refeq{Q-Q+} and Proposition~\ref{pr:contr-exp}.
 \medskip

\noindent\textbf{Remark 2.} The usage of $j$-pies and operations on them is less
advantageous when $1<j<n$. The trouble is that if a cube $\zeta$ containing a
$j$-edge is of the form $\zeta(X|ijk)$, where $i<j<k$, then $\zeta$ is the sum
of the rhombus $\rho=\rho(X|ik)$ and segment $[0,\theta_j]$, but $\rho$ lies on
the rear side of $\zeta$. For this reason, a relation between $j$-pies and
rhombus tilings becomes less visualized. However, we will not use $j$-pies
with $1<j<n$ in this paper.

%-----------------------------ssec 3.4
\subsection{Applications of the contraction and expansion operations}  \label{ssec:appl_contr}

We start with the following assertion, using terminology and notation as above.

\begin{prop} \label{pr:edge_rh_cube}
Let $Q$ be a cubillage on $Z=Z(n,3)$.
 \begin{itemize}
\item[\rm(i)] If $Q$ contains vertices $X$ and $Xi$, then it does the edge
$(X,Xi)$.
\item[\rm(ii)] If $Q$ contains vertices $X,Xi,Xj,Xij$ ($i<j$), then it does the
rhombus $\rho(X|ij)$.
\item[\rm(iii)] If $Q$ contains a set $S$ of eight vertices
$X,Xi,Xj,Xk,Xij,Xik,Xjk,Xijk$ ($i<j<k$), then it does the cube $\zeta(X|ijk)$.
  \end{itemize}
  \end{prop}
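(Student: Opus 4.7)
The plan is to prove (i), (ii), (iii) simultaneously by induction on $n$, with base case $n=3$ immediate since the unique cubillage on $Z(3,3)$ is the full parallelotope and all three claims amount to listing its faces. For the inductive step, I would exploit the $n$-contraction $\Qcon_n$ on $Z(n-1,3)$, governed by \refeq{C-Cp}--\refeq{contr}.

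Most subcases reduce painlessly. In each part, either every listed vertex avoids $n$, so they all belong to $V(Q^-_n)$ and the induction hypothesis applied to $\Qcon_n$ produces the desired cell, which by \refeq{contr} already lives in $Q^-_n \subseteq Q$; or every listed vertex contains $n$, so they all lie in $V(Q^+_n)$, and after a $-\theta_n$ shift the induction hypothesis in $\Qcon_n$ yields a cell that we lift back by $+\theta_n$ into $Q^+_n$. Since the listed vertices in (i)--(iii) differ from $X$ only by adding elements of $\{i,j,k\}$, this all-or-none dichotomy is forced unless one of $i,j,k$ itself equals $n$; hence only the straddling cases $i=n$ in (i), $j=n$ in (ii), and $k=n$ in (iii) remain.

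The crux, and the main obstacle, is the straddling case of (i). Given $X, Xn \in V_Q$ with $n \notin X$, we have $X \in V(Q^-_n)$ and $Xn \in V(Q^+_n)$; after the $-\theta_n$ shift of $Q^+_n$, the vertex $Xn$ descends onto $X$, so in $\Qcon_n$ the vertex $X$ appears simultaneously as a vertex of $Q^-(M)$ and of $Q^+(M)$ for $M=\Pi^-_n$. Since these two subcubillages of $\Qcon_n$ meet only along $M$, the point $X$ must be a vertex of $M$; by \refeq{Pin}, every vertex of $\Pi^-_n$ is the tail of an $n$-edge of $Q$, necessarily terminating at $X+\theta_n = Xn$. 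Hence $(X,Xn) \in Q$.

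With (i) in hand, the straddling cases of (ii) and (iii) follow from the rhombus-tiling structure of $\Pi^-_n$. For (ii) with $j=n$, applying (i) to $(X, Xn)$ and $(Xi, Xin)$ places $X$ and $Xi$ in $V(\Pi^-_n)$; since $\pi(\Pi^-_n)$ is a rhombus tiling on $Z(n-1,2)$, Theorem~\ref{tm:LZ} guarantees an edge from $X$ to $Xi$ in it, which lifts to an edge of $\Pi^-_n$ in $Z(n,3)$; taking the product with $[0,\theta_n]$ yields $\rho(X|in)$ as a $2$-cell of the pie $\Pi_n \subseteq Q$. For (iii) with $k=n$, applying (i) to all four $n$-pairs places $X, Xi, Xj, Xij$ in $V(\Pi^-_n)$; Theorem~\ref{tm:LZ}'s reconstruction of a rhombus tiling from its spectrum, together with $\Zset$-independence of the $\theta_\ast$ ruling out interior lattice points inside the $ij$-parallelogram based at $X$, forces $\rho(X|ij)$ to be a tile of $\Pi^-_n$; then $\rho(X|ij)+[0,\theta_n] = \zeta(X|ijn)$ is a cube of $\Pi_n \subseteq Q$.
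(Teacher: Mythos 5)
Your proof is correct and follows essentially the same strategy as the paper's: simultaneous induction on $n$ via the $n$-contraction, splitting the vertex set into the all-in-$Q^-_n$ case, the all-in-$Q^+_n$ case, and the straddling case. Where the paper simply asserts that in the straddling case the relevant vertices ``belong to $\Pi^-_n$'', you supply the reason explicitly (after contraction such a vertex lies in both $Q^-(M)$ and $Q^+(M)$, hence on $M=\Pi^-_n$, hence is the tail of an $n$-edge), which is a worthwhile clarification but not a different route.
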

 \begin{proof}
We use induction on $n$. Let us prove~(iii), denoting by $Q'$ the cubillage on
$Z(n-1,3)$ that is the $n$-contraction of $Q$. Three cases are possible.
 \smallskip

(a) Suppose that $k<n$ and $n\notin X$. Then $S$ belongs to the vertex set of
the subcubillage $Q^-_n$ (cf.~\refeq{Z-Z+}) and, therefore, to the vertex set
of $Q'$. By induction, $Q'$ contains the cube on $S$, namely,
$\zeta=\zeta(X|ijk)$. From~\refeq{Q-Q+} and Proposition~\ref{pr:contr-exp} it
follows that under the $n$-expansion operation for $Q'$ using the s-membrane
$M:=\Pi^-_n$, $\zeta$ becomes a cube in $Q$, as required. (Recall that
$\Pi^-_n$ is the corresponding disk in $\Pifr_n$, defined in the paragraph
before~\refeq{Z-Z+}.)

(b) Suppose that $n\in X$. Then $k<n$ and $S$ belongs to the vertex set of
$Q^+_n$. Therefore, $S':=\{Y-n\colon Y\in S\}$ is contained in $V_{Q'}$ and,
moreover, in the vertex set of the subcubillage $Q^+(M)$ of $Q'$ (where $M$ is
as in~(a)). So, by induction, $Q^+(M)$ contains the cube $\zeta'=\zeta(X-n|ijk)$.
The $n$-expansion operation for $Q'$ using $M$ transfers $\zeta'$ to the
desired cube $\zeta(X|ijk)$ in $Q$.

(c) Now let $n\notin X$ and $k=n$. Then the set $S^-:=\{X,Xi,Xj,Xij\}$ belongs
to $\Pi^-_n$ (and $Q^-_n$), and the set $S^+:=\{Xn,Xin,Xjn,Xijn\}$ to $\Pi^+_n$
(and $Q^+_n$). The $n$-contraction operation shifts $S^+$ by $-\theta_n$ and
merges it with $S^-$ (which lies in $M$). By induction, $Q'$ contains the rhombus
$\rho=\rho(X|ij)$. The $n$-expansion operation for $Q'$ using $M$ transforms
$\rho$ into the cube $\zeta(Z|ijk)$ in $Q^0_n(M)$, and therefore, in $Q$
(cf.~\refeq{Q-Q+}), as required.

Assertions in (i) and (ii) are shown in a similar fashion (even easier).
 \end{proof}

Based on this proposition, we now prove Proposition~\ref{pr:T-smembr}.

Let $Q$ be a cubillage on $Z(n,3)$, and $T$ a rhombus tiling on $Z(n,2)$ with
$V_T\subset V_Q$ (regarding vertices as subsets of $[n]$). For each rhombus
$\rho=\rho(X|ij)$ in $T$, the vertices of the form $X,Xi,Xj,Xij$ are contained
in $Q$ as well, and by~(ii) in Proposition~\ref{pr:edge_rh_cube}, $Q$ contains
a rhombus $\rho'$ on these vertices. Then $\rho=\pi(\rho')$. Combining such
rhombi $\rho'$ in $Q$ determined by the rhombi $\rho$ on $T$, we obtain a
2-dimensional subcomplex $M$ in $Q$ which is bijectively mapped by $\pi$ onto
$T$. Hence $M$ is an s-membrane in $Q$ isomorphic to $T$, yielding
Proposition~\ref{pr:T-smembr} and Theorem~\ref{tm:smembr-scoll}.

%----------------------- Sec.4

\section{The lattice of s-membranes} \label{sec:lattice_s}

As mentioned in the Introduction, the set $\Sbold^\ast(C)$ of maximal by size
strongly separated collections $S\subset 2^{[n]}$ that are contained in a fixed
maximal chord separated collection $C\subset 2^{[n]}$ has nice structural
properties. Due to
Theorems~\ref{tm:LZ},\,\ref{tm:galash},\,\ref{tm:smembr-scoll}, it is more
enlightening to deal with equivalent geometric objects, by considering a
cubillage $Q$ on the zonotope $Z=Z(n,3)$ and the set $\Mscr(Q)$ of s-membranes
in $Q$.

Using notation as in Sect.~\SSEC{pies}, for an s-membrane $M\in \Mscr(Q)$, we
write $Z^-(M)$ ($Z^+(M)$) for the (closed) region of $Z$ bounded by the front
side $\Zfr$ of $Z$ and $M$ (resp. by $M$ and the rear side $\Zrear$)) and write
$Q^-(M)$ ($Q^+(M)$) for the set of cubes of $Q$ contained in $Z^-(M)$ (resp.
$Z^+(M)$). The sets $Q^-(M)$ and $Q^+(M)$ are important in our analysis and we
call them the \emph{front heap} and the \emph{rear heap} of $M$, respectively.

%If the projection $\pi:(x,y,z)\mapsto (x,z)$ bijectively maps a set $S\subseteq
%Z$ on the zonogon $Z'=Z(n,2)$, we can associate with $S$ the function $f_S$
%taking value $y$ on $(x,z)\in Z'$.

Consider two s-membranes  $M,M'\in\Mscr(Q)$ and form the sets $N:=(M\cup M')^{\rm
fr}$ and $N':=(M\cup M')^{\rm re}$. Then both $N,N'$ are bijective to $Z'$ by $\pi$. Also one
can see that for any rhombus $\rho$ in $M$, if some interior point of $\rho$
belongs to $N$ ($N'$), then the entire $\rho$ lies in $N$ (resp. $N'$), and
similarly for $M'$. These observations imply that:
  \begin{numitem1}  \label{eq:NNp}
  \begin{itemize}
\item[(i)] both $N$ and $N'$ are s-membranes in $Q$;
\item[(ii)] the front heap $Q^-(N)$ of $N$ is equal to $Q^-(M)\cap Q^-(M')$,
and the front heap $Q^-(N')$ of $N'$ is equal to $Q^-(M)\cup Q^-(M')$.
  \end{itemize}
  \end{numitem1}
(Accordingly, the rear heaps of $N$ and $N'$ are $Q^+(N)=Q^+(M)\cup Q^+(M')$
and $Q^+(N')=Q^+(M)\cap Q^+(M')$.) By~\refeq{NNp}, the front heaps of
s-membranes constitute a distributive lattice, which gives rise to a similar
property for the s-membranes themselves.

  \begin{prop} \label{pr:latticeMQ}
The set $\Mscr(Q)$ of s-membranes in $Q$ is endowed with the structure of
distributive lattice in which the meet and join operations for
$M,M'\in\Mscr(Q)$ produce the s-membranes $M\wedge M'$ and $M\vee M'$ such that
$Q^-(M\wedge M')=Q^-(M)\cap Q^-(M')$ and $Q^-(M\vee M')=Q^-(M)\cup Q^-(M')$.
\hfill\qed
  \end{prop}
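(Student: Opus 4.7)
The plan is to leverage the already-established fact \refeq{NNp} and reduce the proposition to a statement about a sublattice of a Boolean algebra. First, I would equip $\Mscr(Q)$ with the partial order defined by $M\le M'$ iff $Q^-(M)\subseteq Q^-(M')$. To see this is a partial order on $\Mscr(Q)$ (and not merely a preorder), I would verify that the map $M\mapsto Q^-(M)$ is injective: given the front heap $Q^-(M)$, the s-membrane $M$ is recovered as the common boundary of $Q^-(M)$ and its complementary heap $Q\setminus Q^-(M)=Q^+(M)$ (equivalently, $M=(Q^-(M))^{\rm re}\setminus \Zfr$ reunited with the rim $\Zrim$). Injectivity gives that $M\mapsto Q^-(M)$ embeds $\Mscr(Q)$ into the poset $(2^Q,\subseteq)$.

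Next I would take $M\wedge M':=N$ and $M\vee M':=N'$ with $N,N'$ defined as in \refeq{NNp}. Part~(i) of \refeq{NNp} ensures these are genuine s-membranes in $Q$, and part~(ii) gives the identities $Q^-(M\wedge M')=Q^-(M)\cap Q^-(M')$ and $Q^-(M\vee M')=Q^-(M)\cup Q^-(M')$ asserted in the proposition. Because the embedding $M\mapsto Q^-(M)$ is order-preserving and its image is closed under $\cap$ and $\cup$, the heap-level identities immediately translate into the meet/join properties in $\Mscr(Q)$: any $L\in\Mscr(Q)$ with $L\ge M,M'$ satisfies $Q^-(L)\supseteq Q^-(M)\cup Q^-(M')=Q^-(M\vee M')$, hence $L\ge M\vee M'$, and symmetrically for $\wedge$.

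Finally, distributivity is automatic. The image of $\Mscr(Q)$ under the injection $M\mapsto Q^-(M)$ is a sublattice of the Boolean algebra $(2^Q,\cap,\cup)$, which is distributive; any sublattice of a distributive lattice is distributive, so $\Mscr(Q)$ is distributive via the lattice isomorphism onto its image.

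The only substantive step beyond quoting \refeq{NNp} is the injectivity of $M\mapsto Q^-(M)$, and this is a soft geometric observation rather than a real obstacle — an s-membrane projects bijectively onto $Z(n,2)$ and therefore separates $Z$ into exactly two components whose closures partition the cubes of $Q$, so specifying one side pins down the membrane. Everything else is formal lattice-theoretic bookkeeping riding on the already-proven closure of front heaps under $\cap$ and $\cup$.
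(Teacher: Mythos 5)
Your argument is correct and essentially identical to the paper's: both invoke \refeq{NNp} to identify $(M\cup M')^{\rm fr}$ and $(M\cup M')^{\rm re}$ as the meet and join via their front heaps, and then transport the distributive lattice structure from the sublattice of $(2^Q,\cap,\cup)$ back to $\Mscr(Q)$. The only difference is that you spell out the injectivity of $M\mapsto Q^-(M)$, which the paper leaves implicit; this is a sound and worthwhile addition but not a different route.
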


It is useful to give an alternative description for this lattice, which reveals
an intrinsic structure and a connection with flips in rhombus tilings. It is
based on a natural partial order on $Q$ defined below. Recall that
for a cube $\zeta$, the front side $\zetafr$ and the rear side $\zetarear$ are
formed by the rhombi as indicated in the Example in Sect.~\SSEC{smembr}.
 \medskip

\noindent\textbf{Definition.} For $\zeta,\zeta'\in Q$, we say that $\zeta$
\emph{immediately precedes} $\zeta'$ if $\zetarear\cap(\zeta')^{\rm fr}$
contains a rhombus.

  \begin{lemma} \label{lm:acyclic}
The directed graph $\Gamma_Q$ whose vertices are the cubes of $Q$ and whose
edges are the pairs $(\zeta,\zeta')$ such that $\zeta$ immediately precedes
$\zeta'$ is acyclic.
  \end{lemma}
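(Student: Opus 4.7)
My plan is to build a strictly monotone integer-valued function $\phi$ on the vertices of $\Gamma_Q$, forbidding cycles. Using the distributive lattice $(\Mscr(Q),\le)$ of s-membranes from Proposition~\ref{pr:latticeMQ} (where $M\le M'$ iff $Q^-(M)\subseteq Q^-(M')$), I attach to each cube $\zeta\in Q$ the set $T(\zeta):=\{M\in\Mscr(Q):\zeta\in Q^-(M)\}$, which is an up-set non-empty because $\Zrear\in T(\zeta)$, and hence has a unique minimum $M_\zeta$. I set $\phi(\zeta):=|Q^-(M_\zeta)|$.

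Take an edge $\zeta\to\zeta'$ with shared rhombus $\rho\subset\zeta^{\rm re}\cap(\zeta')^{\rm fr}$. First I check $T(\zeta')\subseteq T(\zeta)$: for any $M\in T(\zeta')$, $\rho$ cannot lie on $M$ (if it did, the membrane would locally separate $\zeta$ on the front side of $\rho$'s affine plane into $Q^-(M)$ and $\zeta'$ on the rear side into $Q^+(M)$, contradicting $\zeta'\in Q^-(M)$); so $\rho$ is interior to $Z^-(M)$, and then $\zeta$, being on the front side of $\rho$, also lies in $Z^-(M)$, i.e., $M\in T(\zeta)$. Consequently $M_\zeta\le M_{\zeta'}$.

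For strictness I claim $\zeta^{\rm re}\subset M_\zeta$, which in particular gives $\rho\in M_\zeta$; the same local-separation argument then places $\zeta'$ in $Q^+(M_\zeta)$, so $M_\zeta\notin T(\zeta')$ and $M_\zeta<M_{\zeta'}$. To prove the claim, pick a maximal chain $\Zfr=M_0<M_1<\dots<M_N=\Zrear$ in $\Mscr(Q)$ passing through $M_\zeta$ (exists by distributivity). Since the rank $|Q^-(\cdot)|$ advances by $1$ at each cover, every cover $M_{k-1}\prec M_k$ adds exactly one cube $\eta_k$ to $Q^-$, and by the hexagonal-flip description of covers announced in the introduction this is realized by replacing $\eta_k^{\rm fr}\subset M_{k-1}$ with $\eta_k^{\rm re}\subset M_k$. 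Let $r$ be the step at which $M_r=M_\zeta$; minimality of $M_\zeta$ forces $\zeta\notin Q^-(M_{r-1})$ while $\zeta\in Q^-(M_r)$, so $\eta_r=\zeta$, and therefore $\zeta^{\rm re}\subset M_r=M_\zeta$.

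Thus $\phi(\zeta)<\phi(\zeta')$ along every edge of $\Gamma_Q$, so $\Gamma_Q$ admits no directed cycle. The most delicate point is the flip description of cover relations in the s-membrane lattice---i.e., that each minimal step up the lattice is precisely a hexagonal swap adding a single cube's rear rhombi and removing its front rhombi; once this is in hand, the rest is bookkeeping in the distributive lattice.
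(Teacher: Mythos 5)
Your opening move is clean: for an edge $\zeta\to\zeta'$ through a shared rhombus $\rho=\zetarear\cap(\zeta')^{\rm fr}$, every s-membrane $M$ with $\zeta'\in Q^-(M)$ must have $\rho$ strictly in the interior of $Z^-(M)$ (if $\rho\subset M$ then $\zeta'$ would sit behind $M$), so $\zeta\in Q^-(M)$ as well; hence $T(\zeta')\subseteq T(\zeta)$ and $M_\zeta\le M_{\zeta'}$. That observation is correct and genuinely uses only Proposition~\ref{pr:latticeMQ}, which in the paper precedes the lemma and does not depend on it.

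The gap is in the strictness step, and it is a circularity rather than a small omission. To show $\zeta^{\rm re}\subset M_\zeta$ you invoke two facts: that $|Q^-(\cdot)|$ increases by exactly one along every cover of the lattice $\Mscr(Q)$, and that each such cover is realized by a hexagonal flip, i.e.\ by swapping $\eta^{\rm fr}$ for $\eta^{\rm re}$ for a single cube $\eta$. Neither of these follows from distributivity alone: a distributive sublattice of the Boolean lattice of subsets of $Q$ can have covers that change the set by more than one element unless one knows the achievable sets are precisely the order ideals of a poset on $Q$. In the paper this is exactly Corollary~\ref{cor:part_order} and the flip description~\refeq{flip_in_M}, both of which are stated \emph{after} Lemma~\ref{lm:acyclic} and proved \emph{using} it (``Since $\Gamma_Q$ is acyclic, $J$ has a maximal element $\zeta$\dots''). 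So the ``most delicate point'' you flag is not a deferred technicality; it is the content of the lemma you are trying to prove, and assuming it makes the argument circular. You would need an independent proof that covers in $\Mscr(Q)$ add one cube and look like hexagonal swaps, and I do not see how to get that without first having the acyclicity (or something equivalent).

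The paper's own proof takes a genuinely different and self-contained route: induction on $n$ via the $n$-pie $\Pi_n$ and the $n$-contraction. It shows a directed path in $\Gamma_Q$ can cross from $Q^-_n$ to $\Pi_n$ and from $\Pi_n$ to $Q^+_n$ at most once, so any cycle would lie entirely in $Q^-_n$, $Q^+_n$, or $\Pi_n$; the first two are killed by the inductive hypothesis applied to the contraction $\Qcon_n$, and the subgraph on $\Pi_n$ is shown isomorphic to a left-to-right adjacency graph on the rhombi of the tiling $\pi(\Pifr_n)$, which is acyclic for elementary planar reasons. If you want to keep your lattice-theoretic framing, the fix would be to first establish the ideal/flip structure by some independent means, but as written the proposal rests on a consequence of the lemma rather than on anything available at that point in the development.
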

  \begin{proof}
Consider a directed path $P=(\zeta_0,e_1,\zeta_1,\ldots,e_p,\zeta_p)$ in
$\Gamma_Q$ (where $e_r$ is the edge $(\zeta_{r-1},\zeta_r)$). We show that $P$
is not a cycle (i.e., $\zeta_0\ne\zeta_p$ when $p>0$) by using induction on
$n$. This is trivial if $n=3$.

We know that for any $n$-cube $\zeta=\zeta(X|ijn)$ of $Q$, its front rhombus
$\rho(X|ij)$ belongs to the front side $\Pifr_n$ of the $n$-pie $\Pi_n$ of $Q$,
its rear rhombus $\rho(Xn|ij)$ belongs to the rear side $\Pirear_n$, and the
other rhombi of $\zeta$, namely, $\rho(X|in),\,\rho(X|jn), \, \rho(Xj|in), \,
\rho(Xi|jn)$ lie in the interior or belt of $\Pi_n$ (for definitions, see
Sect.~\SSEC{pies}). This implies that if for some $r$, the cubes
$\zeta_{r-1}$ and $\zeta_r$ belong to different sets among $Q^-_n,\, Q^+_n,\,
\Pi_n$, then the edge $e_r$ goes either from $Q^-_n$ to $\Pi_n$, or from
$\Pi_n$ to $Q^+_n$. Therefore, $P$ crosses each of the disks  $\Pifr_n$ and
$\Pirear_n$ at most once, implying that $\zeta_0=\zeta_p$ would be possible
only if the vertices of $P$ are entirely contained in exactly one of $Q^-_n, \,
Q^+_n,\, \Pi_n$.

Let $Q'$ be the $n$-contraction of $Q$. We assume by induction that
$\Gamma_{Q'}$ is acyclic. Then the cases $\zeta_r\in Q^-_n$ and $\zeta_r\in
Q^+_n$ are impossible (subject to $\zeta_0=\zeta_p$), taking into account that
$Q'$ is obtained by combining $Q^-_n$ and the cubes of $Q^+_n$ shifted by
$-\theta_n$.

It remains to show that the subgraph $\Gamma'$ of $\Gamma_Q$ induced by the
cubes of $\Pi_n$ is acyclic. To see this, observe that for an $n$-cube
$\zeta=\zeta(X|ijn)$ of $Q$, its rear rhombi lying in the interior or belt of
$\Pi_n$ are $\rho_1:=\rho(X|in)$ and $\rho_2:=\rho(Xi|jn)$. So if $\zeta$ and
another $n$-rhombus $\zeta'$ are connected by edge $(\zeta,\zeta')$ in $\Gamma'$, then
$\zeta'$ shares with $\zeta$ either $\rho_1$ or $\rho_2$. Let us associate with
$\zeta,\zeta'$ the corresponding rhombi $\rho,\rho'$ on $\Pifr_n$,
respectively. Then $\rho=\rho(X|ij)$ and $\rho'=(X'|i'j')$ for some $X'$ and
$i'<j'$. These rhombi have an edge in common; namely, the edge $e=(X,Xi)$ if
$\zeta,\zeta'$ share the rhombus $\rho_1$, and the edge $e'=(Xi,Xij)$ if
$\zeta,\zeta'$ share $\rho_2$. Note that (under the projection by $\pi$) both
$e,e'$ belong to the \emph{left} boundary of $\rho$, in view of
$i<j$.

These observations show that the subgraph $\Gamma'$ is isomorphic to the graph
$\Gamma''$ whose vertices are the rhombi in $\Pifr_n$ and whose edges are the
pairs $(\rho,\rho')$ such that $\rho,\rho'$ share an edge lying in the left
boundary of $\rho$ (and in the right boundary of $\rho'$). This $\Gamma''$ is
acyclic. (Indeed, consider the rhombus tiling $T:=\pi(\Pifr_n)$ on $Z(n-1,2)$.
Then any directed path from $\emptyset$ to $[n-1]$ in $T$ may be crossed by
an edge of $\Gamma''$ only from right to left, not back,
whence $\Gamma''$ is acyclic.)
  \end{proof}

 \begin{corollary} \label{cor:part_order}
The graph $\Gamma_Q$ induces a partial order $\prec$ on the cubes of $Q$.
Moreover, the ideals of $(Q,\prec)$ (i.e., the subsets $Q'\subseteq Q$
satisfying $(\zeta\in Q',\; \zeta'\prec \zeta \Longrightarrow \zeta'\in Q')$
are exactly the front heaps $Q^-(M)$ of s-membranes $M\in\Mscr(Q)$. \hfill\qed
  \end{corollary}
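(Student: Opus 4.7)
The first assertion follows immediately from Lemma~\ref{lm:acyclic}: since $\Gamma_Q$ is acyclic, its transitive closure yields a well-defined partial order $\prec$ on the cubes of $Q$.

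For the correspondence with front heaps, the direction ``front heap $\Rightarrow$ ideal'' is geometrically direct. Fix $M \in \Mscr(Q)$, $\zeta \in Q^-(M)$, and a cube $\zeta'$ immediately preceding $\zeta$, so that $(\zeta')^{\rm re} \cap \zetafr$ contains a rhombus $\rho$. Relative to $\rho$, the cube $\zeta'$ sits on the front side and $\zeta$ on the rear side (in the $y$-direction); equivalently, passing from $\zeta$ through $\rho$ moves toward $\Zfr$. Since $Z^-(M)$ is the closed region between $\Zfr$ and $M$ and contains $\zeta$, it must also contain $\zeta'$, so $\zeta' \in Q^-(M)$. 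Iterating along chains of immediate precedence yields the ideal property.

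For the reverse direction, given an ideal $Q' \subseteq Q$ I would construct the candidate s-membrane $M$ as follows. For each point $p$ in the relative interior of a tile of $\pi(\Zfr)$, the fiber $\pi^{-1}(p) \cap Z$ cuts through some sequence of cubes $\zeta_1, \zeta_2, \ldots, \zeta_k$ ordered by increasing $y$-coordinate; consecutive cubes share a rhombus of $Q$, so $\zeta_1 \prec \zeta_2 \prec \cdots \prec \zeta_k$. The ideal property forces $Q' \cap \{\zeta_1, \ldots, \zeta_k\} = \{\zeta_1, \ldots, \zeta_{j(p)}\}$ for some index $j(p) \in \{0, 1, \ldots, k\}$. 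Define $M$ by taking, above each such $p$, the shared rhombus $\zeta_{j(p)}^{\rm re} \cap \zeta_{j(p)+1}^{\rm fr}$ (or the appropriate rhombus of $\Zfr$ or $\Zrear$ in the extremal cases $j(p) = 0$ or $j(p) = k$). Then $\pi$ is injective on $M$ with $\pi(M) = Z(n,2)$, so $M$ is an s-membrane, and by construction $Q^-(M) = Q'$.

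The main technical step is verifying that the assignment $p \mapsto j(p)$ is constant on the relative interior of each rhombus of $Q$, so that these local choices assemble into an actual 2-subcomplex rather than a fractured surface. This follows because any two cubes sharing an interior rhombus $\rho$ are immediately comparable in $\prec$ and appear consecutively in every fiber crossing $\pi(\rho)$, forcing $Q'$ to include or exclude them consistently; a parallel local check handles rhombi on $\Zfr$ and $\Zrear$. Once coherence of $M$ is established, the bijection $M \leftrightarrow Q^-(M)$ is immediate.
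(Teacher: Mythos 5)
Your proof is correct and supplies exactly the argument that the paper compresses into its one-line remark (that ideals of $(Q,\prec)$ are the subsets with no incoming $\Gamma_Q$-edge from the complement): the fiber-based construction of $M$ as the rear boundary of $\bigcup Q'$, together with the observation that consecutive cubes along a $\pi$-fiber are immediately $\prec$-comparable, is precisely what turns that remark into a proof. Both directions are sound; the only thing left slightly implicit in your coherence step is that no cube of $Q'$ can lie behind an exit rhombus $\rho$ (any such cube would be $\succ$ the rear cube of $\rho$, which lies outside $Q'$), which is the reason the exit rhombus is locally constant on $\pi(\rho)^{\circ}$.
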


Here the second assertion can be concluded from the fact that the ideals of
$(Q,\prec)$ are the sets of cubes $Q'\subseteq Q$ such that $\Gamma_Q$ has no
edge going from $Q-Q'$ to $Q'$.

Using Corollary~\ref{cor:part_order}, we now explain a  relation to strong
flips in rhombus tilings. For convenience we identify an s-membrane
$M\in\Mscr(Q)$ with the corresponding rhombus tiling $\pi(M)$ on $Z(n,2)$. In
particular, the minimal s-membrane $\Zfr$ is identified with the standard
tiling $\Tst_n$, and the maximal s-membrane $\Zrear$ with the
antistandard tiling $\Tant_n$.

Let $M\in\Mscr(Q)$ be different from $\Tst_n$. Then the heap $J:=Q^-(M)$
is nonempty. Since $\Gamma_Q$ is acyclic, $J$ has a maximal element
$\zeta=\zeta(X|ijk)$  (i.e., there is no $\zeta'\in J$ with $\zeta\prec
\zeta'$). Then $M$ contains all rear rhombi of $\zeta$, namely, $\rho(X|ik),\;
\rho(Xi|jk),\; \rho(Xk|ij)$. They span the hexagon $H(X|ijk)$ having
$\Yturn$-configuration and we observe that
  \begin{numitem1} \label{eq:flip_in_M}
for $M,\,J,\, \zeta$ as above, the set $J':=J-\{\zeta\}$
is an ideal of $(Q,\prec)$ as well, and the s-membrane (rhombus tiling) $M'$
with $Q^-(M')=J'$ is obtained from $M$ by replacing the rhombi of
$\zetarear$ by the rhombi forming $\zetafr$ (namely, $\rho(X|ij),\;
\rho(X|jk),\;\rho(Xj|ik)$), or, in other words, by the lowering flip involving the
hexagon $H(X|ijk)$ (see the picture in the end of Sect.~\SSEC{rhomb_til}).
  \end{numitem1}

(Of especial interest are \emph{principal} ideals of $(Q,\prec)$; each of them
is determined by a cube $\zeta\in Q$ and consists of all $\zeta'\in Q$ from
which $\zeta$ is reachable by a directed path in $\Gamma_Q$. The s-membrane
corresponding to such an ideal admits only one lowering flip within $Q$,
namely, that determined by the rhombi of $\zeta$. Symmetrically: considering
$M\in\Mscr(Q)$ different from $\Tant_n$ and its rear heap $R:=Q^+(M)$, and
choosing an element $\zeta\in R$ that admits no $\zeta'\in R$ with $\zeta'\prec
\zeta$, we can make the raising flip by replacing in $M$ the rhombi of
$\zetafr$ by the ones of $\zetarear$. When $R$ is formed by some $\zeta\in Q$
and all $\zeta'\in Q$ reachable from $\zeta$ by a directed path in $\Gamma_Q$,
then $M$ admits only one raising flip within $Q$, namely, that
determined by the rhombi of $\zeta$.)

In terms of maximal s-collections, \refeq{flip_in_M} together with
Proposition~\ref{pr:latticeMQ} implies the following
  \begin{corollary} \label{cor:flip_in_SC}
Let $C$ be a maximal chord separated collection in $2^{[n]}$. The set
$\Sbold^\ast(C)$ of maximal by size strongly separated collections in $C$ is a
distributive lattice with the minimal element $\Iscr_n$ and the maximal element
\rm{co-}$\Iscr_n$ (being the set of intervals and the set of co-intervals in
$[n]$, respectively) in which $S\in\Sbold^\ast(C)$ immediately precedes
$S'\in\Sbold^\ast(C)$ if and only if $S'$ is obtained from $S$ by one raising
flip (``in the presence of six witnesses'').
  \end{corollary}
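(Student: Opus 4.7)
The plan is to transport the structural information already established on the geometric side (s-membranes in a cubillage) back to the combinatorial side (maximum-size s-collections in a maximal c-collection) via the bijections of Theorems \THEO{galash} and \THEO{smembr-scoll}.

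First, let $Q=Q(C)$ be the cubillage on $Z(n,3)$ associated with $C$ by Theorem \THEO{galash}, so that $V_Q=C$. By Theorem \THEO{smembr-scoll}, the map $M\mapsto V_M$ yields a bijection $\Mscr(Q)\to \Sbold^\ast(V_Q)=\Sbold^\ast(C)$. Transporting the meet and join of Proposition \ref{pr:latticeMQ} through this bijection endows $\Sbold^\ast(C)$ with the structure of a distributive lattice. To identify the extreme elements, note that $\Zfr\in\Mscr(Q)$ has empty front heap and is therefore the bottom of $\Mscr(Q)$; by \refeq{fr-rear} its projection by $\pi$ is $\Tst_n$, whose spectrum is precisely the set $\Iscr_n$ of intervals in $[n]$. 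Symmetrically, $\Zrear$ is the top, with spectrum the set of co-intervals.

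For the characterization of covering pairs by strong flips, I would rely on Corollary \ref{cor:part_order}, which identifies the front heaps of s-membranes with the order ideals of $(Q,\prec)$. A standard fact about the distributive lattice of order ideals of a finite poset says that $J$ is covered by $J'$ iff $J'=J\cup\{\zeta\}$ for some $\zeta$ that is maximal in $J'$ (equivalently, minimal in $Q-J$). Reading the displayed claim \refeq{flip_in_M} in the upward direction, the associated s-membranes $M<M'$ are then related by erasing the three rhombi of $\zetafr$ inside $M$ and inserting the three rhombi of $\zetarear$, which is precisely a strong raising flip at the hexagon $H(X|ijk)$ when $\zeta=\zeta(X|ijk)$, matching \refeq{strong_flip} (the replacement of $Xj$ by $Xik$).

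The point that requires a bit of care is the converse: that \emph{every} strong raising flip linking two members of $\Sbold^\ast(C)$ really comes from adding a single cube of $Q$ to a front heap. For this I would invoke Proposition \ref{pr:edge_rh_cube}(iii): if $M,M'\in\Mscr(Q)$ differ by a strong raising flip at some hexagon $H(X|ijk)$, then the eight vertices of that configuration lie in $V_M\cup V_{M'}\subseteq V_Q$, so $Q$ contains the cube $\zeta:=\zeta(X|ijk)$; one then verifies that $Q^-(M')=Q^-(M)\cup\{\zeta\}$ and that $\zeta$ is maximal in $Q^-(M')$, so this really is a cover in the ideal lattice. Assembling these pieces yields the corollary; the main (very mild) obstacle is this last converse step, and it is handled cleanly by Proposition \ref{pr:edge_rh_cube}(iii).
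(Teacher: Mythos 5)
Your argument is correct and follows essentially the same route as the paper: the lattice structure and the extreme elements come from Theorem~\ref{tm:smembr-scoll}, Proposition~\ref{pr:latticeMQ}, \refeq{fr-rear}, and Corollary~\ref{cor:part_order}, and the cover-to-flip translation is \refeq{flip_in_M}. Your explicit check of the ``if'' direction (that a raising flip between two members of $\Sbold^\ast(C)$ really comes from a single cube of $Q$, via Proposition~\ref{pr:edge_rh_cube}(iii)) is a worthwhile addition that the paper leaves implicit and that is precisely the caveat flagged in Remark~3.
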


\noindent\textbf{Remark 3.} Note that the set of all maximal s-collections in
$2^{[n]}$ is a poset (with the unique minimal element $\Iscr_n$ and the unique
maximal element \rm{co-}$\Iscr_n$ and with the immediate preceding relation
given by strong flips as well); however, in contrast to $\Sbold^\ast(C)$, this
poset is not a lattice already for $n=6$, as is shown in Ziegler~\cite{zieg}.

Note also that a triple $\tau$ of rhombi in an s-membrane $M\in\Mscr(Q)$ that
spans a hexagon need not belong to one cube of $Q$ (in contrast to~(iii) in
Proposition~\ref{pr:edge_rh_cube} where $Q$ contains a cube if all \emph{eight}
vertices of this cube belong to $V_Q$). In this case, $(M,\tau)$ determines a
flip in the variety of all rhombus tilings on $Z(n,2)$ but not within
$\Mscr(Q)$.

%----------------------- Sec.5

\section{Embedding rhombus tilings in cubillages} \label{sec:embed_rt}

In this section we study cubillages on $Z(n,3)$ containing one or more fixed
s-membranes.

%-----------------------------ssec 5.1
 \subsection{Extending an s-membrane to a cubillage}  \label{ssec:memb-to-cube}
We start with the following issue. Given a maximal strongly separated
collection $S\subset 2^{[n]}$, let $\Cbold(S)$ be the set of maximal chord
separated collections containing $S$. How to construct explicitly one instance
of such c-collections?  A naive method consists in choosing, step by step, a
new subset $X$ of $[n]$ and adding it to the current collection including $S$
whenever $X$ is chord separated from all its members. However, this method is
expensive as it may take exponentially many (w.r.t. $n$) steps.

An efficient approach involving geometric interpretations and using flips in
s-membranes consists in the following. We build in the ``empty'' zonotope
$Z=Z(n,3)$ the abstract s-membrane $M=M(S)$ with $V_M=S$, by embedding $S$ (as
the corresponding set of points) in $Z$ and forming the rhombus $\rho(X|ij)$
for each quadruple of the form $\{X,Xi,Xj,Xij\}$ in $S$. Then we construct a
cubillage $Q$ containing this s-membrane (thus obtaining $S\subset
V_Q\in\Cbold(S)$, as required).

This is performed in two stages. At the first stage, assuming that $M$ is
different from $\Zfr$ (equivalently, $\pi(M)\ne \Tst_n$), we grow, step by
step, a partial cubillage $Q'$ filling the region $Z^-(M)$ between $\Zfr$ and
$M$, starting with $Q':=\emptyset$. At each step, the current $Q'$ is such that
$(Q')^{\rm re}=M$ and $(Q')^{\rm fr}$ forms an s-membrane $M'$. If $M'=\Zfr$,
we are done. Otherwise $\pi(M')\ne\Tst$ implies that $M'$ contains at least one
triple of rhombi spanning a hexagon having $\Yturn$-configuration (see the end
of Sect.~\SSEC{rhomb_til}). We choose one hexagon $H=H(X|ijk)$ of this sort,
add to $Q'$ the cube $\zeta=\zeta(X|ijk)$ induced by $H$, and update $M'$
accordingly by replacing the rhombi of $H$ by the other three rhombi in $\zeta$
(which form $\zetafr$); we say that the updated $M'$ is obtained from the
previous one by the \emph{lowering flip using} $\zeta$. And so on until we
reach $\Zfr$.

At the second stage, acting in a similar way, we construct a partial cubillage
$Q''$ filling the region $Z^+(M)$ between $M$ and $\Zrear$. Namely, a current
$Q''$ is such that $(Q'')^{\rm fr}=M$, and $(Q'')^{\rm re}$ forms an s-membrane
$M''$. Unless $M''=\Zrear$, we choose in $M''$ a hexagon $H$ having
$Y$-configuration, add to $Q''$ the cube $\zeta$ induced by $H$ and update
$M''$ accordingly, thus making the \emph{raising flip using} $\zeta$. And so
on.

Eventually, $Q:=Q'\cup Q''$ becomes a complete cubillage in $Z$ containing $M$,
as required. Since the partial cubillages $Q',Q''$ are constructed
independently,
  \begin{numitem1} \label{eq:direct_union}
the set $\Qbold(M)$ of cubillages on $Z=Z(n,3)$ containing a fixed s-membrane
$M$ is represented as the ``direct union'' of the sets $\Qbold^-(M)$ and
$\Qbold^+(M)$ of partial cubillages filling $Z^-(M)$ and $Z^+(M)$,
respectively, i.e., $\Qbold(M)=\{Q'\cup Q''\colon Q'\in\Qbold^-(M),\; Q''\in
\Qbold^+(M)\}$.
  \end{numitem1}

\noindent\textbf{Remark 4.} When $M=\Zfr$ ($M=\Zrear$), ~$\Qbold^+(M)$ (resp.
$\Qbold^-(M)$) becomes the set $\Qbold_n$ of all cubillages on $Z(n,3)$. The
latter set is connected by 3-flips (defined in the Introduction); moreover, a
similar property is valid for fine tilings on zonotopes of any dimension, as a
consequence of results in~\cite{MS}.

It light of this, one can consider an arbitrary s-membrane $M$ and ask about the
connectedness of the set $\Qbold(M)$ of cubillages w.r.t. 3-flips that preserve $M$.
This is equivalent to asking whether or not for any two partial cubillages
$Q,Q'\in\Qbold^-(M)$, there exists a sequence $Q_0,Q_1,\ldots,Q_p\in
\Qbold^-(M)$ such that $Q_0=Q$, $Q_p=Q'$, and each $Q_r$ ($r=1,\ldots,p$) is
obtained by a 3-flip from $Q_{r-1}$; and similarly for $\Qbold^+(M)$. The answer
to this question is affirmative (a proof is left to a forthcoming paper).

%-----------------------------ssec 5.3
 \subsection{Cubillages for two s-membranes}  \label{ssec:two_smembr}

One can address the following issue. Suppose we are given two abstract
s-membranes $M,M'$ properly embedded in $Z=Z(n,3)$. When there exists a
cubillage $Q$ on $Z$ containing both $M$ and $M'$? The answer is clear: if and
only if the set $V_M\cup V_{M'}$ is chord separated. However, one can ask:  how
to construct such a $Q$ efficiently?

For simplicity, consider the case of ``non-crossing'' s-membranes, assuming
that $M$ is situated in $Z$ before $M'$, i.e., $M$ lies in $Z^-(M')$.

A partial cubillage $Q'$ filling $Z^-(M)$ and a partial cubillage $Q''$ filling
$Z^+(M')$ always exist and can be constructed by the method as in
Sect.~\SSEC{memb-to-cube}. So the only problem is to construct a partial
cubillage $\tilde Q$ filling the space between $M$ and $M'$, i.e.,
$Z(M,M'):=Z^+(M)\cap Z^-(M')$; then $Q:=Q'\cup\tilde Q\cup Q''$ is as required.
Conditions when a required $\tilde Q$ does exist are exposed in the proposition
below.

We need some definitions. Consider a rhombus tiling $T$ on the zonogon
$Z'=Z(n,2)$ and a color $i\in[n]$. For each $i$-edge $e$ in $T$, let $m(e)$ be
the middle point on $e$, and for each $j\in [n]-\{i\}$, let $c(\rho$) be the
central point of the $\{i,j\}$-rhombus $\rho$ in $T$ (where $\rho$ is the
$ij$-rhombus when $i<j$, and the $ji$-rhombus when $j<i$). For such a $\rho$
and the $i$-edges in it, say, $e$ and $e'$, connect $c(\rho)$ by straight-line
segments with each of $m(e)$ and $m(e')$. One easily shows that the union of
these segments over all $j$ produces a non-self-intersecting piecewise linear
curve connecting the middle points of the two $i$-edges on the left and right
boundaries of $Z'$, denoted as $D_i$ and called $i$-th (undirected) \emph{dual
path} for $T$. (The set $\{D_1,\ldots, D_n\}$ matches a pseudo-line
arrangement, in a sense.)
 \medskip

\noindent\textbf{Definitions.} Let $1\le i<j<k\le n$ and let $\rho$ be the
$ik$-rhombus in $T$. The triple $ijk$ is called \emph{normal} if $\rho$ lies
above $D_j$, and an \emph{inversion} for $T$ if $\rho$ lies below $D_j$. The
set of inversions for $T$ is denoted by $\Inver(T)$. Also we say that a triple
$ijk$ in $T$ is \emph{elementary} if the rhombi of types $ij$, $ik$ and $jk$ in it
span a hexagon (which has $Y$-configuration if $ijk$ is normal, and
$\Yturn$-configuration if $ijk$ is an inversion).
  \smallskip

See the picture where a normal triple (an inversion) $ijk$ is illustrated in
the left (resp. right) fragment and the corresponding dual paths are drawn by
dotted lines.

  \vspace{0cm}
\begin{center}
\includegraphics[scale=1]{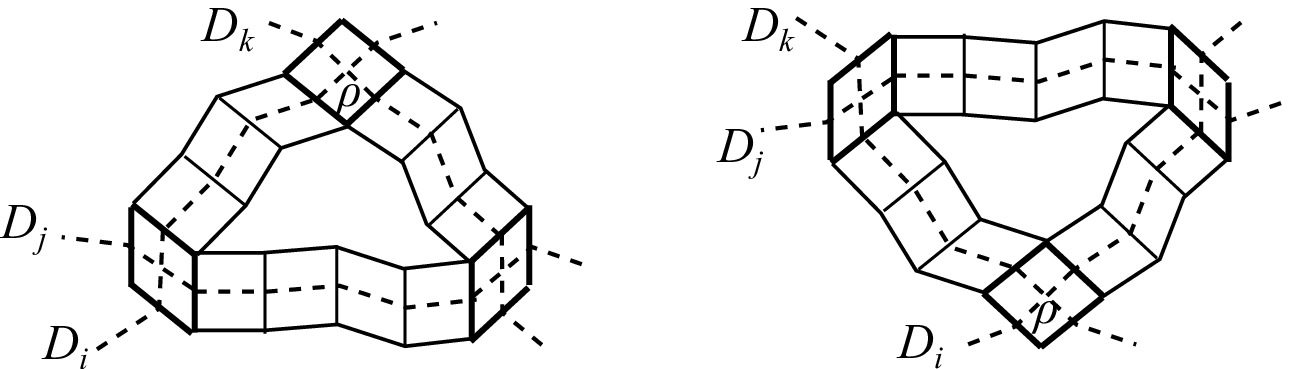}
\end{center}
\vspace{-0.3cm}

  \begin{prop} \label{pr:two_smbr}
Let $M,M'$ be two s-membranes in $Z=Z(n,3)$ such that $M\subset Z^-(M')$. Then
a partial cubillage $\tilde Q$ filling $Z(M,M')$ (and therefore a cubillage
on $Z$ containing both $M,M'$) exists if and only if
$\Inver(M)\subseteq\Inver(M')$. Such a $\tilde Q$ consists of
$|\Inver(M')|-|\Inver(M)|$ cubes and can be constructed efficiently.
  \end{prop}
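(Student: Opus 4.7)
The plan is to realize $\tilde Q$ as a sequence of raising flips that transform $M$ step by step into $M'$, matching each flip to insertion of one cube and to addition of exactly one triple to the inversion set. The key local fact is: if a raising flip applied at a $Y$-configuration hexagon $H(X|ijk)$ turns an s-membrane $N$ into $N'$, then $\Inver(N') = \Inver(N) \sqcup \{ijk\}$. Indeed, the flip is supported entirely inside $H$; the $ik$-rhombus moves from above the dual path $D_j$ of $\pi(N)$ to below it, and direct inspection of the six tiles of $H$ confirms that for every other choice of colors $i'j'k'$ the inclusion of the $i'k'$-rhombus with respect to $D_{j'}$ is preserved. For the necessity direction, extend the given $\tilde Q$ to a full cubillage $Q = Q' \cup \tilde Q \cup Q''$ via Sect.~\SSEC{memb-to-cube}. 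The subset $\tilde Q$ is a convex interval in the poset $(Q,\prec)$ of Corollary~\ref{cor:part_order}, so it has a $\prec$-minimal element $\zeta_1$, whose front face lies entirely in $M$. Peeling cubes off iteratively produces a linear extension $\zeta_1,\ldots,\zeta_{|\tilde Q|}$ and a chain of raising flips $M = N_0 \to N_1 \to \cdots \to N_{|\tilde Q|} = M'$. Summing the local invariant along this chain gives $\Inver(M') \setminus \Inver(M) = \{ijk : \zeta(X|ijk) \in \tilde Q\}$, proving $\Inver(M) \subseteq \Inver(M')$ and $|\tilde Q| = |\Inver(M')| - |\Inver(M)|$.

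For sufficiency, I would induct on $d := |\Inver(M') \setminus \Inver(M)|$. The base case $d=0$ forces $M = M'$ since a rhombus tiling on $Z(n,2)$ is determined by its inversion set (the relative position of every $ik$-rhombus with respect to every $D_j$ is recorded). The inductive step rests on the following \emph{Key Lemma}: if $\Inver(M) \subsetneq \Inver(M')$, then some triple $ijk \in \Inver(M') \setminus \Inver(M)$ is elementary in $M$ and its hexagon has $Y$-configuration. Granting it, the raising flip at the corresponding hexagon $H(X|ijk)$ of $M$ inserts the cube $\zeta = \zeta(X|ijk)$ (lying in $Z^+(M)$) and updates $M$ to an s-membrane $M_1$ with $\Inver(M_1) = \Inver(M) \sqcup \{ijk\} \subseteq \Inver(M')$. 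The containment $\zeta \subset Z^-(M')$, equivalently $M_1 \subset Z^-(M')$, follows because inversion-set inclusion forces $M_1$ to lie on the ``near'' side of $M'$ in the $y$-direction. The induction hypothesis applied to $(M_1,M')$ returns $\tilde Q_1$ filling $Z(M_1,M')$, and $\tilde Q := \{\zeta\} \cup \tilde Q_1$ is as required; the construction is polynomial-time because at each stage one need only scan for hexagons of $Y$-configuration whose triples lie in $\Inver(M') \setminus \Inver(M)$.

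The main obstacle is the Key Lemma. My approach is to choose $ijk \in \Inver(M') \setminus \Inver(M)$ with $k-i$ minimal, and argue that the $ik$-rhombus $\rho$ of $M$ is surrounded by $ij$- and $jk$-rhombi forming a hexagon of $Y$-configuration. Since $ijk \notin \Inver(M)$, $\rho$ lies above $D_j$; for each $j'$ with $i < j' < k$ and $j' \neq j$, minimality of $k-i$ prevents $ij'k$ from belonging to $\Inver(M') \setminus \Inver(M)$, so the status of $ij'k$ agrees in $M$ and in $M'$, pinning down the arrangement of $D_{j'}$ relative to $\rho$. A short combinatorial analysis of the tiles bordering $\rho$ in $M$ then forces the $ij$- and $jk$-rhombi to abut $\rho$ exactly into the desired hexagon. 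An alternative, less self-contained route would invoke the characterization of the higher Bruhat order $B(n,3)$ by inclusion of inversion sets established in~\cite{MS,zieg}, from which both the Key Lemma and the cube count follow immediately.
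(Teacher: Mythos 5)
Your proposal follows the same overall route as the paper: the necessity direction peels off a $\prec$-minimal cube of $\tilde Q$, observes that each such cube determines a $Y$-configuration hexagon in the current s-membrane, and notes that the corresponding raising flip adds exactly one triple to the inversion set (because the triple is elementary and the flip is local); the sufficiency direction inducts on $|\Inver(M')\setminus\Inver(M)|$ and hinges on a Key Lemma asserting the existence of an elementary normal triple in $\Inver(M')\setminus\Inver(M)$. The paper attributes precisely that Key Lemma to Felsner and Weil (Theorem~\ref{tm:FW}), who proved it in answer to an open question of Ziegler, and makes no attempt at a self-contained proof. Your suggested alternative of invoking the higher Bruhat order results of \cite{MS,zieg} or \cite{FW} matches the paper's choice.

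Where you diverge is in your sketched direct proof of the Key Lemma, and there is a genuine gap there. You choose $ijk\in\Inver(M')\setminus\Inver(M)$ with $k-i$ minimal and then claim that ``for each $j'$ with $i<j'<k$ and $j'\neq j$, minimality of $k-i$ prevents $ij'k$ from belonging to $\Inver(M')\setminus\Inver(M)$.'' This step is false: every triple of the form $ij'k$ with $i<j'<k$ has the same value of $k-i$ as $ijk$, so minimality of $k-i$ does not distinguish $j$ from the other intermediate indices and does not exclude several such triples from the difference set simultaneously. Consequently the position of $D_{j'}$ relative to the $ik$-rhombus $\rho$ is not ``pinned down,'' and the final claim that the $ij$- and $jk$-rhombi abut $\rho$ into a $Y$-configuration hexagon does not follow. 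This is exactly the non-trivial content of the Felsner--Weil theorem; the paper deliberately sidesteps it by citation, and absent a genuinely different selection rule your direct argument does not close.

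Two minor remarks. First, your necessity argument extends $\tilde Q$ to a full cubillage before invoking the poset structure; this is harmless but unnecessary, since the partial cubillage $\tilde Q$ already inherits an acyclic precedence graph from $\Gamma_Q$ and one can take a $\prec$-minimal cube directly in $\tilde Q$, as the paper does. Second, in your sufficiency step the containment $\zeta\subset Z^-(M')$ (equivalently $M_1\subset Z^-(M')$) deserves a word of justification beyond ``inversion-set inclusion forces $M_1$ to lie on the near side'': one should note that $\zeta$, having its front side on $M\subset Z^-(M')$ and being the unique $ijk$-cube of any cubillage, must lie in $Z^-(M')$ once one knows the $ik$-rhombus of $M'$ lies below $D_j$ of $M'$, i.e.\ $ijk\in\Inver(M')$; this is easy but not automatic, and the paper's formulation via repeatedly applying Theorem~\ref{tm:FW} implicitly handles it.
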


One direction in this proposition is rather easy. Indeed, suppose that a
partial cubillage $\tilde Q$ filling $Z(M,M')$ does exist. Take a minimal
(w.r.t. the order $\prec$ as in Sect.~\SEC{lattice_s}) cube
$\zeta=\zeta(X|ijk)$ in $\tilde Q$. Then the rhombi of $\zetafr$ belong to $M$
and span the hexagon $H=H(X|ijk)$ having $Y$-configuration. Hence the triple
$ijk$  in $M$ is normal and elementary (using terminology for $M$ as that for
the tiling $\pi(M)$). By making the flip in $M$ using $\zeta$, we obtain an
s-membrane in which $ijk$ becomes an inversion, and the fact that $ijk$ is
elementary implies that no other triple $i'j'k'$ changes its status. Also the
new s-membrane becomes closer to $M'$. Applying the procedure $|\tilde Q|$
times, we reach $M'$. This shows ``only if'' part in
Proposition~\ref{pr:two_smbr}.

As to ``if'' part, its proof is less trivial and relies on a result by Felsner and
Weil. Answering an open question by Ziegler~\cite{zieg}, they proved the
following assertion (stated in~\cite{FW} in equivalent terms of pseudo-line
arrangements).
  \begin{theorem} {\rm\cite{FW}} \label{tm:FW}
Let $T,T'$ be rhombus tilings on $Z(n,2)$ and let $\Inver(T)\subset\Inver(T')$.
Then $T$ has an elementary triple contained in $\Inver(T')-\Inver(T)$.
  \end{theorem}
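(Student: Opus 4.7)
The plan is a minimum-counterexample argument based on the triangular regions cut out by triples of dual paths. For a triple $i<j<k$, the three dual paths $D_i, D_j, D_k$ in $T$ pairwise cross at the $ij$-, $ik$-, and $jk$-rhombi, and these three crossings bound a closed triangular region $\triangle_{ijk}(T)$ in the zonogon (each side a segment along one of $D_i, D_j, D_k$). By the definitions just preceding the theorem, the triple is normal or an inversion in $T$ according to the side of $D_j$ on which the $ik$-crossing lies, and it is elementary in $T$ precisely when the interior of $\triangle_{ijk}(T)$ is not pierced by any other dual path --- equivalently, when the three rhombi fill the hexagon $\triangle_{ijk}(T)$.

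Let $\Delta := \Inver(T')\setminus\Inver(T)$, which is nonempty by hypothesis. I would choose $ijk\in\Delta$ minimizing the number of other dual paths entering the interior of $\triangle_{ijk}(T)$ and claim this minimum is zero, which proves the theorem. Assume the contrary; then some $D_l$ crosses exactly two sides of $\triangle_{ijk}(T)$, say the sides along $D_a$ and $D_b$ with $\{a,b\}\subset\{i,j,k\}$. This cuts off a strictly smaller sub-triangle $\triangle_{\{a,b,l\}}(T)$ containing strictly fewer piercing paths. Because $\triangle_{ijk}(T)$ has $Y$-configuration (since $ijk\notin\Inver(T)$), the cut-off sub-triangle inherits a $Y$-configuration, so $\{a,b,l\}\notin\Inver(T)$. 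It remains to choose $l$ (and hence $\{a,b\}$) so that $\{a,b,l\}\in\Inver(T')$; this contradicts the minimality of $ijk$ and closes the argument.

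This last step is where I expect the main difficulty. It requires translating the hypothesis $ijk\in\Inver(T')$ --- a statement about the $\Yturn$-configuration of $D_i, D_j, D_k$ in $T'$ --- into information about which triples $\{a,b,l\}$ must themselves be inversions in $T'$. Since only the inversion sets matter (not geometric realizations of the dual arrangements), this reduces to a short case analysis on the position of $l$ relative to $\{i,j,k\}$ (the four intervals $l<i$, $i<l<j$, $j<l<k$, $l>k$) combined with the options for which two of $\{D_i, D_j, D_k\}$ are crossed by $D_l$ inside the triangle. In each case I would select the piercing path $D_l$ whose crossings with $\partial\triangle_{ijk}(T)$ are innermost (closest to the $ab$-corner of the triangle), so that no other dual path lies between $D_l$ and that corner; a bookkeeping check comparing the relative order of the three pairwise crossings of $\{D_a,D_b,D_l\}$ in $T'$ versus in $T$ should then force $\{a,b,l\}\in\Inver(T')$. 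The rest of the argument is formal; the delicate point is precisely this order-of-crossings bookkeeping in $T'$.
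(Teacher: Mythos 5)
The paper itself does not prove this theorem; it imports it from Felsner and Weil \cite{FW} and uses the result as a black box, so there is no internal proof to compare against. Evaluating your sketch on its own: the minimal-piercing-triangle strategy is a natural one, but two genuine gaps remain. The intermediate assertion that ``the cut-off sub-triangle inherits a $Y$-configuration, so $\{a,b,l\}\notin\Inver(T)$'' is unjustified and false in general. Using the observation that a triple $a<b<c$ is normal in $T$ exactly when, in the crossing order along the middle path $D_b$, the crossing with $D_a$ precedes the crossing with $D_c$, take $l<i<j<k$ with $ijk\notin\Inver(T)$ and $D_l$ cutting off the corner $\rho(ij)$ of $\triangle_{ijk}(T)$. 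Then $\rho(il)$ lies on the $D_i$-side of the triangle strictly between $\rho(ij)$ and $\rho(ik)$, so along $D_i$ the crossing with $D_j$ precedes that with $D_l$; for the triple $lij$, whose middle element is $i$, this is precisely the inversion condition, hence $lij\in\Inver(T)$. So the cut-off triple need not be normal in $T$: whether it is depends on which corner is cut and where $l$ sits relative to $\{i,j,k\}$, and a specific choice must be made and justified, not granted.

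The second gap --- producing some $l$ with $\{a,b,l\}\in\Inver(T')$ --- you explicitly defer, and it is the real content of the theorem, not a ``bookkeeping check.'' The hypothesis $ijk\in\Inver(T')$ constrains only the mutual position of $D_i,D_j,D_k$ in $T'$; it says nothing about where $D_l$ sits in $T'$ relative to them, and there is no evident transfer of information from how $D_l$ pierces $\triangle_{ijk}(T)$ to how it behaves in $T'$. Since Ziegler \cite{zieg} showed the analogous statement fails one dimension higher, any correct argument must exploit a genuinely planar phenomenon, and that is exactly what remains hidden in the deferred step. As written, this is an outline of a plausible plan of attack rather than a proof.
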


\noindent(This is a 2-dimensional analog of the well-known fact that for two
permutations $\sigma,\sigma'\in S_n$ with $\Inver(\sigma)\subset
\Inver(\sigma')$, $\sigma$ has a transposition in
$\Inver(\sigma')-\Inver(\sigma)$. Ziegler~\cite{zieg} showed that the
corresponding assertion in dimension 3 or more is false.)

Now Theorem~\ref{tm:FW} implies that if $M,M'$ are s-membranes with
$\Inver(M)\subset\Inver(M')$, then there exists a cube $\zeta=\zeta(X|ijk)$
such that $\zetafr\subset M$ and $ijk\in\Inver(M')-\Inver(M)$. The flip
in $M$ using $\zeta$ increases the set of inversions by $ijk$.
This enables us to recursively construct a partial cubillage filling $Z(M,M')$
starting with $\zeta$, and ``if'' part of Proposition~\ref{pr:two_smbr} follows.

%----------------------- Sec.6

\section{W-membranes and quasi-combies} \label{sec:w-membr}

In this section we deal with a maximal c-collection $C$ in $2^{[n]}$ and its
associated cubillage $Q$ on the zonotope $Z=Z(n,3)$ (i.e., with $V_Q=C$), and
consider the class $\Wbold^\ast(C)$ of maximal by size weakly separated
collections contained in $C$. (Recall that $C$ need not be w-pure, by
Lemma~\ref{lm:example}.) Since each $W\in\Wbold^\ast(C)$ is the spectrum of a
combi on the zonogon $Z'=Z(n,2)$ (cf.~Theorem~\ref{tm:combi}), a reasonable
question is how a combi $K$ with $V_K\subset V_Q$ (regarding vertices as
subsets of $[n]$) relates to the structure of $Q$. We have seen that maximal by
size s-collections in $C$ and their associated rhombus tilings on $Z'$ are
represented by s-membranes, that are special 2-dimensional subcomplexes in $Q$.
In case of weak separation, we will represent combies via \emph{w-membranes},
that are subcomplexes of a certain subdivision, or fragmentation, of $Q$. Also,
along with a combi $K$ with $V_K\subset V_Q$, we will be forced to deal with
the set of so-called \emph{quasi-combies} accompanying $K$, which were
introduced in~\cite{DKK3} and have a nice geometric interpretation in terms of
$Q$ as well.

%-----------------------------ssec 6.1
 \subsection{Fragmentation of a cubillage and quasi-combies}  \label{ssec:fragment}

The \emph{fragmentation} $\Qfrag$ of a cubillage $Q$ on $Z=Z(n,3)$ is the
complex obtained by cutting $Q$ by the horizontal planes through the vertices
of $Q$, i.e., the planes $z=h$ for $h=0,\ldots,n$. This subdivides each cube
$\zeta=\zeta(X|ijk)$ into three pieces: the lower tetrahedron $\zeta^\nabla$,
the middle octahedron $\zeta^\square$, and the upper tetrahedron
$\zeta^\Delta$, called the $\nabla$-, $\square$-, and $\Delta$-\emph{fragments}
of $\zeta$, respectively. Depending on the context, we also may think of
$\Qfrag$ as the set of such fragments over all cubes. We say that a fragment
has \emph{height} $h+\frac12$ if it lies between the planes $z=h$ and $z=h+1$.

It is convenient to visualize faces of $\Qfrag$ as though looking at them from
the front and slightly from below, i.e., along a vector $(0,1,\eps)$, and
accordingly use the projection $\pi_\eps: \Rset^3\to\Rset^2$ defined by
$\pi_\eps(x,y,z)=(x,z-\eps y)$ for a sufficiently small $\eps>0$. One can see
that $\pi_\eps$ transforms the generators $\theta_1, \ldots,\theta_n$ for $Z$
as in~\refeq{cyc_conf} into generators for $Z'=Z(n,2)$) which are adapted for
combies, i.e., satisfy the strict convexity condition~\refeq{strict_xi}.

For $S\subset Z$, let $S^{\rm fr}_\eps$ ($S^{\rm re}_\eps$) denote the set of
points of $S$ seen from the front (from the rear) in the direction related to
$\pi_\eps$, i.e., the points $(x,y,z)\in S\cap\pi_\eps^{-1}(\alpha,\beta)$ with
$y$ minimum (resp. maximum), for all $(\alpha,\beta)\in\Rset^2$. In particular,
when replacing the previous projection $\pi$ by $\pi_\eps$, all facets
(triangles) of the fragments of a cube become fully seen from the front or
rear; see the picture.

  \vspace{0cm}
\begin{center}
\includegraphics[scale=1]{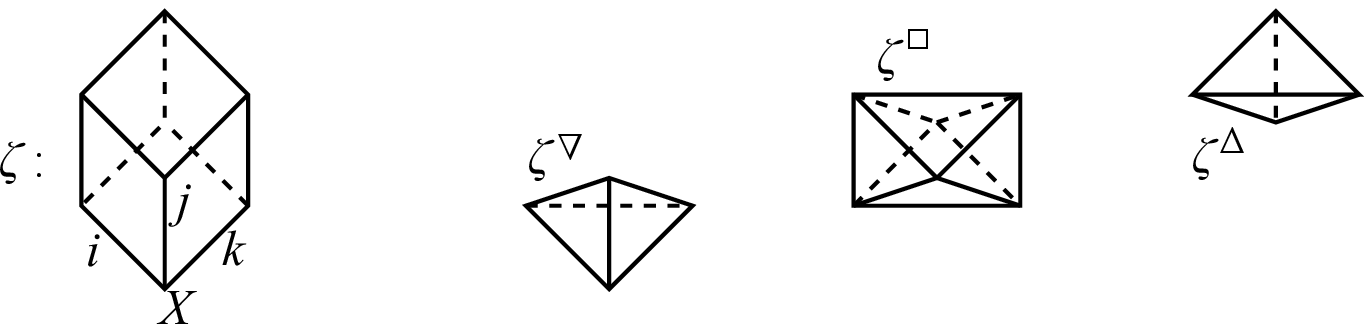}
\end{center}
\vspace{-0.3cm}

Thus, all 2-dimensional faces in $\Qfrag$ are triangles, and we conditionally
refer to those of them that lie in horizontal sections $z=h$ as \emph{horizontal}
triangles, and to the other ones (halves of rhombi in $Q$) as \emph{vertical}
ones. Horizontal triangles $\tau$ are divided into two groups. Namely, $\tau$
is called \emph{upper} (\emph{lower}) if it has vertices of the form $Xi,Xj,Xk$
(resp. $Y-k,Y-j,Y-i$) for $i<j<k$, and therefore its ``obtuse'' vertex $Xj$
(resp. $Y-j$) is situated above the edge $(Xi,Xk)$ (resp. below the edge
$(Y-k,Y-i)$), called the \emph{longest} edge of $\tau$ (which is justified when
$\eps$ is small). Equivalently, an upper (lower) horizontal $\tau$ belongs to
an $\nabla$-fragment (resp. $\Delta$-fragment).

Accordingly, we refer to the edges in horizontal sections as horizontal ones,
or \emph{H-edges}, and to the other edges as vertical ones, or \emph{V-edges}
(adapting terminology for combies from Sect.~\SSEC{combi}).

For $h\in[n]$, let $\Qfrag_h$ denote the section of $Q$ at height $h$
(consisting of horizontal triangles). The triangulation $\Qfrag_h$ partitioned
into upper and lower triangles will be of use in what follows. (A nice property
of $\Qfrag_h$ pointed out in~\cite{gal} is that its spectrum (the set of
vertices regarded as subsets of $[n]$) constitutes a maximal w-collection in
$\binom{[n]}{h}$.) For example, if $Q$ is the cubillage on $Z(4,3)$ formed by
four cubes $\zeta(\emptyset|123),\; \zeta(\emptyset|134),\; \zeta(1|234),\;
\zeta(3|124)$, then the triangulations $\Qfrag_1,\, \Qfrag_2,\,\Qfrag_3$ are as
illustrated in the picture, where the sections of these cubes are labeled by
$a,b,c,d$, respectively.

  \vspace{-0.3cm}
\begin{center}
\includegraphics[scale=0.9]{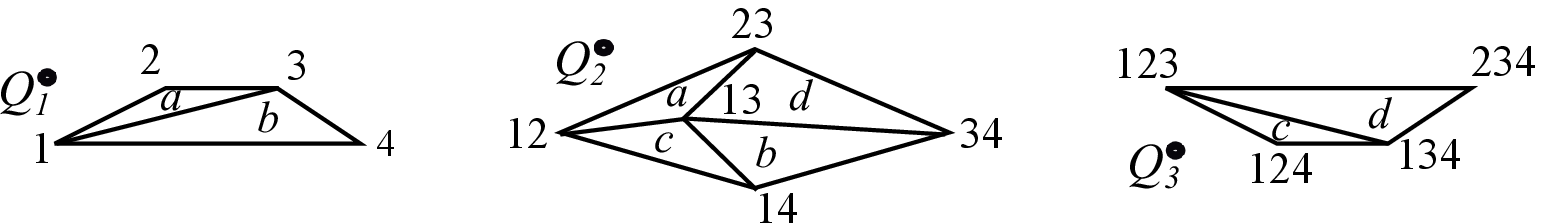}
\end{center}
\vspace{-0.3cm}

%-----------------------------ssec 6.2
 \subsection{W-membranes}  \label{ssec:wmembran}

\noindent\textbf{Definition.} A 2-dimensional subcomplex $M$ of the
fragmentation $\Qfrag$ is called a \emph{w-membrane} if $M$ is a disk which is
bijectively projected by $\pi_\eps$ on $Z(n,2)$; equivalently, the boundary of
the disk $M$ is the rim $\Zrim$ of $Z=Z(n,3)$ and $M=\Mfr_\eps$.
 \medskip

Arguing as in Sect.~\SEC{lattice_s} for s-membranes, one shows
that the set $\Mscr(\Qfrag)$ of w-membranes in $\Qfrag$ constitutes a
distributive lattice.

More precisely, associate with a w-membrane $M$: (a) the part $Z^-(M)$
($Z^+(M)$) of $Z$ between $\Zfr$ and $M$ (resp. between $M$ and $\Zrear$); and (b)
the subcomplex $\Qfragmin(M)$ ($\Qfragpl(M)$) of $\Qfrag$ contained in
$Z^-(M)$ (resp. $Z^+(M)$), called the \emph{front heap} (resp. \emph{rear
heap}) when it is regarded as the corresponding set of $\nabla$-, $\square$-,
and $\Delta$-fragments.

Then (similar to~\refeq{NNp}) for two w-membranes $M,M'\in\Mscr(\Qfrag)$, we
have:
  \begin{numitem1} \label{eq:NNpeps}
  \begin{itemize}
  \item[(i)] both $N:=(M\cup M')^{\rm fr}_\eps$ and $N':=(M\cup M')^{\rm re}_\eps$
are w-membranes;
  \item[(ii)] $\Qfragmin_N=\Qfragmin_M\cap\Qfragmin_{M'}$ and
$\Qfragmin_{N'}=\Qfragmin_M\cup\Qfragmin_{M'}$.
  \end{itemize}
  \end{numitem1}

  \begin{prop} \label{pr:lattfragm}
$\Mscr(\Qfrag)$ is a distributive lattice in which operations $\wedge$ and
$\vee$ applied to $M,M'\in\Mscr(\Qfrag)$ produce w-membranes $M\wedge M'$ and $M\vee
M'$ such that $\Qfragmin(M\wedge M')=\Qfragmin(M)\cap \Qfragmin(M')$ and
$\Qfragmin(M\vee M')=\Qfragmin(M)\cup \Qfragmin(M')$. \hfill\qed
  \end{prop}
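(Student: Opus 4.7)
The plan is to mirror the structure of the proof of Proposition~\ref{pr:latticeMQ}: once \refeq{NNpeps} is established, the map $\Phi\colon M\mapsto\Qfragmin(M)$ realizes $\Mscr(\Qfrag)$ as a sublattice of the Boolean lattice $(2^{\Qfrag},\cap,\cup)$, and the distributivity of intersection and union transports onto $\Mscr(\Qfrag)$ the asserted distributive lattice structure with the stated formulas for meet and join.

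The real task is thus to verify \refeq{NNpeps}. I would first establish a non-splitting property: every fragment $\phi\in\Qfrag$ (a $\nabla$-, $\square$-, or $\Delta$-fragment) lies entirely in $Z^-(M)$ or entirely in $Z^+(M)$ for any w-membrane $M$. Indeed $\phi$ is convex, each 2-face of $\phi$ is a triangle of $\Qfrag$, and $M$ is by definition a union of such triangles, so $M$ can meet $\bd(\phi)$ only along full faces of $\phi$ and cannot cut $\phi$ in two. Consequently the partition $\Qfrag=\Qfragmin(M)\sqcup\Qfragpl(M)$ is unambiguous, and $M$ is recovered from $\Qfragmin(M)$ as the union of triangles of $\Qfrag$ separating $\Qfragmin(M)$ from $\Qfragpl(M)$; in particular $\Phi$ is injective.

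With this in place, the verification of \refeq{NNpeps} reduces to a local analysis. Set $N:=(M\cup M')^{\rm fr}_\eps$ and consider a triangle $\tau\subset M$. Since $\pi_\eps$ is injective on $M'$, each $\pi_\eps$-fiber meets $M'$ in exactly one point, so for every $p\in\tau^\circ$ there is a well-defined $p'\in M'$ with $\pi_\eps(p)=\pi_\eps(p')$, and $p\mapsto y(p)-y(p')$ is continuous on $\tau^\circ$. If this function vanishes at some $p$, then $p=p'$ (two points on the same $\pi_\eps$-fiber coincide as soon as they share a $y$-coordinate), and so the interior of $\tau$ meets some triangle $\tau'\subset M'$, forcing $\tau=\tau'$ because distinct triangles of $\Qfrag$ have disjoint interiors. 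Hence either $\tau\in M\cap M'$ and $y(p)-y(p')\equiv 0$, or $y(p)-y(p')$ is of constant sign on $\tau^\circ$. In both cases $\tau$ is either entirely contained in $N$ or meets $N$ only along $\bd(\tau)$, and the symmetric statement for triangles of $M'$ is identical. Thus $N$ is a union of triangles of $\Qfrag$; because $\pi_\eps$ restricts to a bijection $N\to Z(n,2)$ (at each base point one selects the $y$-smaller of the two candidates), $N$ is a w-membrane. The same reasoning with ``front'' replaced by ``rear'' handles $N':=(M\cup M')^{\rm re}_\eps$, yielding part~(i) of \refeq{NNpeps}; part~(ii) is then read off directly from the constructions of $N$ and $N'$.

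I expect the most delicate step to be the local analysis just described, which rests on the combination of the non-splitting property with the fact that triangles of $\Qfrag$ have pairwise disjoint interiors. Once \refeq{NNpeps} is in hand, the remainder is a direct transcription of the argument given in Section~\SEC{lattice_s} for s-membranes.
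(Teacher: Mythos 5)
Your proof is correct and follows essentially the same route as the paper, which for this proposition simply says ``arguing as in Section~\SEC{lattice_s} for s-membranes'' and, in the s-membrane case, asserts without proof the key claim that each cell of $M$ either lies entirely in $N:=(M\cup M')^{\rm fr}$ or meets it only along its boundary. Your sign-constancy argument on $\tau^\circ$ (together with the observation that distinct 2-cells of $\Qfrag$ have disjoint relative interiors) is a clean way to supply the details the paper elides, and the preliminary non-splitting observation, which gives injectivity of $M\mapsto\Qfragmin(M)$, is likewise a step the paper leaves implicit.
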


Next, for fragments $\tau,\tau'$ in $\Qfrag$, we say that $\tau$
\emph{immediately precedes} $\tau'$ if $\taurear_\eps \cap (\tau')^{\rm
fr}_\eps$ consists of a (vertical or horizontal) triangle. Accordingly, we
define the directed graph $\Gamma_{\Qfrag}$ whose vertices are the fragments in
$\Qfrag$ and whose edges are the pairs $(\tau,\tau')$ such that $\tau$
immediately precedes $\tau'$.
  \begin{lemma} \label{lm:acycfrag}
The graph $\Gamma_{\Qfrag}$ is acyclic.
  \end{lemma}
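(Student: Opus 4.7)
The plan is to mirror the proof of Lemma~\ref{lm:acyclic} by reducing acyclicity of $\Gamma_{\Qfrag}$ to that of the cube graph $\Gamma_Q$, via the natural projection $\phi(\tau):=\zeta(\tau)$ sending each fragment to the cube it belongs to.

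First, I would classify the edges $(\tau,\tau')$ of $\Gamma_{\Qfrag}$ by the shape of the witnessing triangle $\sigma\subset \tau^{\rm re}_\eps\cap (\tau')^{\rm fr}_\eps$. The central combinatorial observation is that horizontal triangles in $\Qfrag$ can only be shared intra-cube: an upper-type triangle $\{Xi,Xj,Xk\}$ at height $|X|+1$ can only arise as the top face of $\zeta(X|ijk)^\nabla$ and the bottom face of $\zeta(X|ijk)^\square$, both inside the same cube $\zeta(X|ijk)$; similarly, a lower-type triangle $\{Xij,Xik,Xjk\}$ at height $|X|+2$ can only be shared between $\zeta(X|ijk)^\square$ and $\zeta(X|ijk)^\Delta$. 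Consequently the intra-cube edges of $\Gamma_{\Qfrag}$ inside a cube $\zeta$ form the acyclic chain $\zeta^\nabla\to\zeta^\square\to\zeta^\Delta$.

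For an inter-cube edge, $\sigma$ must therefore be vertical, that is, half of a rhombus $\rho$ shared between $\zeta:=\zeta(\tau)$ and $\zeta':=\zeta(\tau')$. Since the front/rear status of a vertical face is unaffected by passing from $\pi$ to $\pi_\eps$ (for $\eps>0$ small enough), the inclusion $\sigma\subset \tau^{\rm re}_\eps\cap (\tau')^{\rm fr}_\eps$ forces $\rho\subset \zeta^{\rm re}\cap (\zeta')^{\rm fr}$, making $(\zeta,\zeta')$ an edge of $\Gamma_Q$. Thus every edge of $\Gamma_{\Qfrag}$ either stays inside a single cube (advancing along $\nabla\to\square\to\Delta$) or projects under $\phi$ to a genuine edge of $\Gamma_Q$.

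To conclude, suppose $\Gamma_{\Qfrag}$ has a directed cycle $(\tau_0,\tau_1,\ldots,\tau_p=\tau_0)$. The resulting sequence $\phi(\tau_0),\phi(\tau_1),\ldots,\phi(\tau_p)=\phi(\tau_0)$ is a closed walk in which consecutive cubes are either equal (intra-cube steps) or connected by an edge of $\Gamma_Q$ (inter-cube steps). Since $\Gamma_Q$ is acyclic by Lemma~\ref{lm:acyclic}, it admits a topological order and this closed walk must be constant; hence all $\tau_i$ lie in a single cube $\zeta$, so the cycle lives inside the three-vertex chain $\zeta^\nabla\to\zeta^\square\to\zeta^\Delta$, a contradiction. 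The main obstacle is the combinatorial bookkeeping of step one: one must verify that the vertex labels of a horizontal triangle in $\Qfrag$ uniquely determine both the cube containing it and the pair of fragment types ($\nabla$--$\square$ or $\square$--$\Delta$) glued along it, thereby ruling out any cross-cube horizontal sharing.
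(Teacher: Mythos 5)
Your proof is correct. It shares the paper's central mechanism — observing that two fragments sharing a vertical triangle belong to distinct cubes linked by an edge of $\Gamma_Q$, and then invoking Lemma~\ref{lm:acyclic} — but it handles the horizontal-triangle steps differently. The paper eliminates them with a height argument: crossing a horizontal triangle strictly increases the fragment's height (the $\eps$-tilt makes the lower side the front), so a cycle would have to stay at one height, whence all its fragments lie in distinct cubes and project to a cycle in $\Gamma_Q$. You instead observe the (correct, but not needed by the paper) combinatorial fact that a horizontal triangle of $\Qfrag$ is always shared by two fragments of the \emph{same} cube, so that the cube-projection $\phi$ is constant along horizontal-triangle edges and strictly advances in $\Gamma_Q$'s topological order along vertical-triangle edges; a cycle therefore has constant $\phi$ and must live in the three-vertex chain $\zeta^\nabla\to\zeta^\square\to\zeta^\Delta$, which is acyclic. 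The two arguments are dual: the paper first kills horizontal edges via a height potential, you first kill vertical edges via the cube potential; both then dispose of the remaining edge type by an easy observation. Your route requires the extra bookkeeping that the three vertices of a horizontal triangle determine the containing cube uniquely and fix the $\nabla$--$\square$ or $\square$--$\Delta$ pairing, which you correctly flag as the main thing to verify; the paper's height argument sidesteps this entirely. Either way the reduction to Lemma~\ref{lm:acyclic} is the essential step and your version of it is sound.
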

  \begin{proof}
Consider a directed path $P=(\tau_0,e_1,\tau_1,\ldots,e_p,\tau_p)$ in
$\Gamma_{\Qfrag}$. We show that $P$ is not a cycle as follows.

If consecutive fragments $\tau=\tau_{i-1}$ and $\tau'=\tau_i$ share a
horizontal triangle $\sigma$ of height $h$ (i.e., lying in the plane $z=h$),
then the construction of $\pi_\eps$ together with the equality
$\sigma=\taurear_\eps \cap (\tau')^{\rm fr}_\eps$ implies that $\tau$ lies
below and $\tau'$ lies above the plane $z=h$.
On the other hand, if $\tau$ and $\tau'$ share a vertical triangle, then both
$\tau,\tau'$ have the same height.

Thus, it suffices to show that if all fragments $\tau_i$ in $P$ have the same
height, then $P$ is not a cycle. This assertion follows from
Lemma~\ref{lm:acyclic} and the observation that if fragments $\tau,\tau'$ of
$\Qfrag$ share a vertical triangle $\sigma$, and $\tau$ immediately precedes
$\tau'$, then the cubes $\zeta,\zeta'$ of $Q$ containing these fragments
(respectively) share the rhombus $\rho$ including $\sigma$ and such that
$\rho=\zetarear\cap(\zeta')^{\rm fr}$.
  \end{proof}

 \begin{corollary} \label{cor:ideal_frag}
The graph $\Gamma_{\Qfrag}$ induces a partial order $\prec$ on the fragments of
$\Qfrag$. The ideals of $(\Qfrag,\prec)$ are exactly the front heaps
$\Qfragmin(M)$ of w-membranes $M\in\Mscr(\Qfrag)$.\hfill\qed
  \end{corollary}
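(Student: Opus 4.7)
The plan is to obtain this corollary as a direct fragmentation-analog of Corollary~\ref{cor:part_order}, with fragments of $\Qfrag$ playing the role of cubes and $\pi_\eps$ playing the role of $\pi$. The first assertion is immediate from Lemma~\ref{lm:acycfrag}: the reflexive-transitive closure of any acyclic digraph is a partial order. So the work is in characterizing the ideals as precisely the front heaps.

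I would first verify the easier direction, that each $\Qfragmin(M)$ is an ideal. Suppose $\tau'\in\Qfragmin(M)$ and $\tau$ immediately precedes $\tau'$, i.e., $\tau$ and $\tau'$ share a triangle $\sigma\subset \taurear_\eps\cap(\tau')^{\rm fr}_\eps$. Since $\tau'\subset Z^-(M)$ and $\sigma$ lies on the $\pi_\eps$-front side of $\tau'$, we get $\sigma\subset Z^-(M)$. The fragment $\tau$ sits on the front side of $\sigma$ with respect to $\pi_\eps$, so $\tau\subset Z^-(M)$ too and therefore $\tau\in\Qfragmin(M)$. Equivalently: the only way to cross from $\Qfragmin(M)$ to its complement along an edge of $\Gamma_{\Qfrag}$ is to go forward across $M$, never backward.

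For the converse direction, let $I\subseteq\Qfrag$ be an ideal. I would define a candidate w-membrane $M_I$ as the union of all (horizontal or vertical) triangles $\sigma$ that are shared by some $\tau\in I$ and some $\tau'\notin I$ with $\tau$ immediately preceding $\tau'$, plus the boundary pieces on $\Zfr$ not adjacent to any fragment of $I$ and on $\Zrear$ adjacent only to fragments of $I$. Then I would show that $\pi_\eps$ restricts to a bijection $M_I\to Z(n,2)$ whose image has boundary $\Zrim$, so that $M_I$ is indeed a w-membrane, and that by construction $\Qfragmin(M_I)=I$.

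The heart of the argument, and the main obstacle, is the bijectivity of $\pi_\eps|_{M_I}$. I would argue this via the fiber structure: for $p$ in the interior of $Z(n,2)$, the fiber $\pi_\eps^{-1}(p)\cap Z$ threads through a sequence $\tau_0,\tau_1,\ldots,\tau_k$ of fragments such that $\tau_{i-1}$ immediately precedes $\tau_i$, the shared face being the (unique) facet of $\tau_{i-1}$ and $\tau_i$ cut transversally by the fiber. This chain property uses that, for sufficiently small $\eps>0$, the projection $\pi_\eps$ sees every triangular face of every $\nabla$-, $\square$-, or $\Delta$-fragment either fully from the front or fully from the rear (as illustrated just before Sect.~\ref{ssec:wmembran}), so the crossing face coincides with $\taurear_{i-1,\eps}\cap\tau^{\rm fr}_{i,\eps}$. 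Granted this, the ideal property of $I$ forces $\{i:\tau_i\in I\}$ to be an initial segment $\{0,1,\dots,j\}$, and then the shared triangle between $\tau_j$ and $\tau_{j+1}$ is the unique point of $M_I$ on the fiber; an analogous boundary analysis handles fibers over $\partial Z(n,2)$. Bijectivity follows, and with it the corollary.
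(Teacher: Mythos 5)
Your proposal is correct and follows essentially the same route the paper indicates: the first claim is immediate from Lemma~\ref{lm:acycfrag}, and (as the paper remarks in the context of Corollary~\ref{cor:part_order}) the second claim amounts to observing that ideals are precisely the sub-sets with no $\Gamma_{\Qfrag}$-edge entering from the complement, so the interface between an ideal and its complement is exactly a surface on which $\pi_\eps$ is injective with image all of $Z(n,2)$. Your fiber-chain argument is a clean way to make that last, implicit, step explicit; it is the natural analogue for fragments of the same observation made for cubes, with $\pi_\eps$ in place of $\pi$.
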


When a w-membrane $M$ is different from the minimal w-membrane $\Zfr$, the
ideal $F:=\Qfragmin(M)$ has at least one maximal element, i.e., a fragment
$\tau\in F$ such that there is no $\tau'\in F-\{\tau\}$ with $\tau\prec\tau'$.
Equivalently, the rear side $\taurear_\eps$ is entirely contained in $M$.
The \emph{lowering flip} in $M$ using $\tau$ replaces the triangles of
$\taurear_\eps$ by the ones of $\taufr_\eps$, producing a w-membrane $M'$
closer to $\Zfr$, namely, such that $\Qfragmin(M')=F-\{\tau\}$. Note that this
flip preserves the set of vertices (i.e., $V_{M'}=V_M$) if $\tau$ is a
$\nabla$- or $\Delta$-fragment, in which case we refer to this as a \emph{tetrahedral}
(lowering) flip. See the picture.

  \vspace{-0.1cm}
\begin{center}
\includegraphics[scale=1]{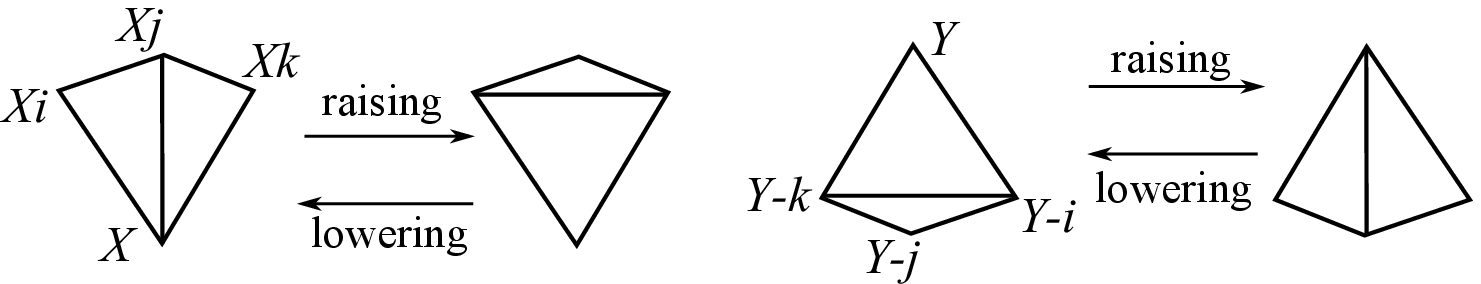}
\end{center}
\vspace{-0.1cm}

In contrast, if $\tau$ is a $\square$-fragment, then the set of vertices does
change, namely, $V_{M'}=(V_M-\{Xik\})\cup\{Xj\}$, where $\tau$ belongs to the
cube $\zeta(X|ijk)$; we refer to such a flip as \emph{octahedral} or
\emph{essential}. See the picture.

  \vspace{0cm}
\begin{center}
\includegraphics[scale=1]{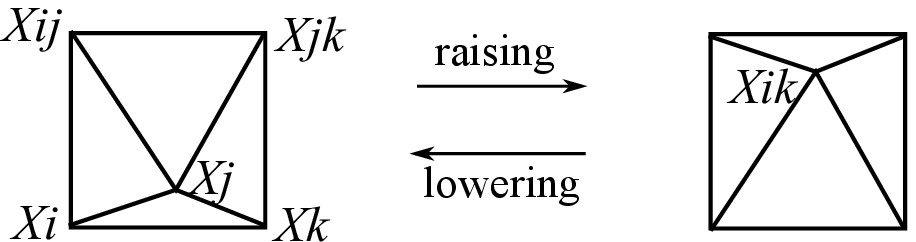}
\end{center}
\vspace{0cm}

Symmetrically, when $M\ne\Zrear$, its rear heap $R:=\Qfragpl(M)$ has at least
one minimal fragment $\tau$, i.e., such that there is no $\tau'\in R-\{\tau\}$
with $\tau'\prec \tau$. Equivalently, $\taufr_\eps$ is entirely contained in
$M$. The \emph{raising flip} in $M$ using $\tau$ produces a w-membrane $M'$
closer to $\Zrear$. Such flips, referred to as tetrahedral and octahedral (or
essential) as before, are illustrated on the above two pictures as well.

Making all possible lowering or raising \emph{tetrahedral} flips starting with
a given w-membrane $M$, we obtain a set of w-membranes with the same spectrum
$V_M$, denoted as $\Escr(M)$ and called the \emph{escort} of $M$. Of
especial interest is a w-membrane $L\in\Escr(M)$ that has the maximum number
of V-edges. Such an $L$ admits neither a $\nabla$-fragment $\tau$ with
$\taurear_\eps\subset L$, nor a $\Delta$-fragment $\tau'$ with $(\tau')^{\rm
fr}_\eps\subset L$, since a lowering flip in the former case and a raising flip
in the latter case would increase the number of V-edges. We call $L$ a
\emph{fine} w-membrane.

We shall see later that the w-membranes correspond to the so-called
\emph{non-expensive quasi-combies}, and the fine w-membranes to the
combies, which are \emph{compatible} with $\Qfrag$.
The following auxiliary statement will be of use.

  \begin{numitem1} \label{eq:vert-hor}
  \begin{itemize}
  \item[(i)] Let $\Qfrag$ contain a vertical $\Delta$-triangle $\Delta$ and a
lower horizontal triangle $\sigma$ sharing an edge $e$ that is the longest
edge of $\sigma$ (and the base edge of $\Delta$). Then $\Delta$ and $\sigma$
belong to the same $\Delta$-fragment $\tau$ of $\Qfrag$ (thus forming
$\taufr_\eps$).
  \item[(ii)] Symmetrically, if a vertical $\nabla$-triangle $\nabla$ and an
upper horizontal triangle $\sigma$ share an edge that is the longest edge of
$\sigma$, then $\nabla\cup\sigma= \taurear_\eps$ for some $\nabla$-fragment
$\tau$ of $\Qfrag$.
  \end{itemize}
  \end{numitem1}

Indeed, let $\rho$ be the rhombus in $Q$ containing the triangle $\Delta$ as
in~(i). This $\rho$ is a facet of one or two cubes of $Q$ and $\sigma$ lies in
the section of one of them, $\zeta$ say, by the horizontal plane containing
$e$. Since $\sigma$ is lower, the only possible case is when $\Delta$ and
$\sigma$ form the front side of the $\Delta$-fragment of $\zeta$, as required.
The case~(ii) is symmetric.

A useful consequence of~\refeq{vert-hor} is:
  \begin{numitem1} \label{eq:fine_wmembr}
for any horizontal triangle $\sigma$ of a fine w-membrane $L$, the longest edge
of $\sigma$ belongs to one more (lower or upper) horizontal triangle of $L$.
  \end{numitem1}

Indeed, if $\sigma$ is lower, then its longest edge belongs to neither a
vertical $\nabla$-triangle (since $\pi_\eps$ is injective on $L$), nor a
vertical $\Delta$-triangle (otherwise $\sigma\cup\Delta$ would be as
in~\refeq{vert-hor}(i) and one could make a lowering flip increasing the number
of V-edges). When $\sigma$ is upper, the argument is similar
(using~\refeq{vert-hor}(ii)).

%-----------------------------ssec 6.3
 \subsection{Quasi-combies and w-membranes}  \label{ssec:combi-membr}

We assume that the zonogon $Z':=Z(n,2)$ is generated by the vectors
$\xi_i=\pi_\eps(\theta_i)$, $i=1,\ldots,n$, where the $\theta_i$ are as
in~\refeq{cyc_conf}; then the $\xi_i$ satisfy~\refeq{strict_xi}. Speaking of
combies and etc., we use terminology and notation as in Sect~\SSEC{combi}.

A \emph{quasi-combi} on $Z'$ is defined in the same way as a combi, with the
only difference that the requirement that for any lens $\lambda$, the lower
boundary $L_\lambda$, as well as the upper boundary $U_\lambda$, has at least
two edges is now withdrawn; so one of $L_\lambda$ and $U_\lambda$ is allowed to
have only one edge. When all vertices of $\lambda$ are contained in
$L_\lambda$, and therefore $U_\lambda$ has a unique edge, namely,
$(\ell_\lambda,r_\lambda)$, we say that $\lambda$ is a \emph{lower semi-lens}.
Symmetrically, when the set $V_\lambda$ of vertices of $\lambda$ belongs to
$U_\lambda$, $\lambda$ is called an \emph{upper semi-lens}. An important
special case of a semi-lens $\lambda$ is a (lower of upper) triangle.

We refer to the $\Delta$- and $\nabla$-tiles of a quasi-combi $K$ as
\emph{vertical} ones, and to the lenses and semi-lenses in it as
\emph{horizontal} ones. This is justified by the fact that all vertices $A$ of
a horizontal tile have the same size, or, let us say, lie in the same
\emph{level} $h=|A|$, whereas a vertical tile has vertices in two levels.

%We denote by $K_h$ the subcomplex of $K$ formed by the horizontal tiles and
%edges of level $h$, and call it $h$-th \emph{girdle} of $K$.

A quasi-combi is called \emph{fully triangulated} if all its tiles are
triangles. An immediate observation is that
  \begin{numitem1} \label{eq:fully_triang}
$\pi_\eps$ maps any w-membrane $M$ of $\Qfrag$ to a fully triangulated
quasi-combi (regarding $M$ as a 2-dimensional complex).
  \end{numitem1}

In what follows we will liberally identify $M$ with $\pi_\eps(M)$ and speak of
a w-membrane as a quasi-combi. A property converse to~\refeq{fully_triang}, in
a sense, is valid in a more general situation. Before stating it, we introduce
four simple operations on a quasi-combi $K$.
\smallskip

\noindent\textbf{(S) Splitting a horizontal tile.} For chosen a lens $\lambda$
of $K$ and non-adjacent vertices $u,v$ in $L_\lambda$ or in $U_\lambda$, the
operation cuts $\lambda$ into two pieces (either one lens and one semi-lens or
two semi-lenses) by connecting $u,v$ by the line-segment $[u,v]$. When $\lambda$ is
a lower (upper) semi-lens and $u,v$ is a pair of non-adjacent vertices in
$L_\lambda$ (resp. $U_\lambda$), the operation acts similarly.
 \smallskip

\noindent\textbf{(M) Merging two horizontal tiles.} Suppose that $\lambda'$ and
$\lambda''$, which are either two semi-lenses or one lens and one semi-lens,
have a common edge $e$ that is the longest edge of at least one of them,
$\lambda'$ say, i.e., $e=(\ell_{\lambda'},r_{\lambda'})$. The operation merges
$\lambda',\lambda''$ into one piece $\lambda:=\lambda'\cup\lambda''$.
  \smallskip

One can see that both operations result in correct quasi-combies. Two examples
are illustrated in the picture.

  \vspace{-0.1cm}
\begin{center}
\includegraphics[scale=1]{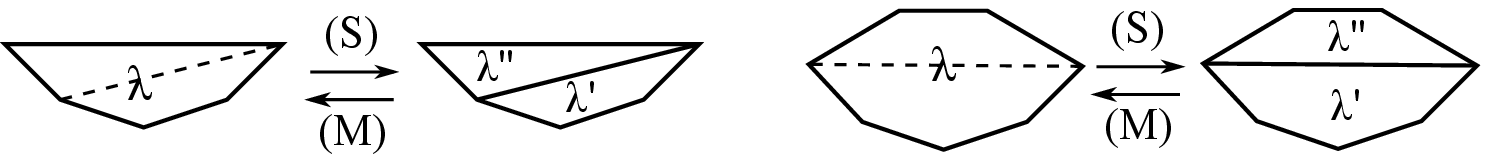}
\end{center}
\vspace{-0.1cm}

The next two operations involve semi-lenses and vertical triangles and
resemble, to some extent, tetrahedral flips in w-membranes. Here by a
\emph{lower} (\emph{upper}) \emph{fan} in a quasi-combi $K$ we mean a sequence
of $\nabla$-tiles $\nabla_r=\nabla(X|i_{r-1}i_r)$ (resp. $\Delta$-tiles
$\Delta_r=\Delta(Y|i_{r-1}i_r)$, $r=1,\ldots,p$, where $i_0<\cdots<i_p$ (resp.
$i_0>\cdots >i_p$); i.e., these triangles have the same bottom vertex $X$ (resp.
the same top vertex $Y$) and two consecutive triangles share a vertical edge.
  \smallskip

\noindent\textbf{(E) Eliminating a semi-lens.} Suppose that the longest edge
$e=(\ell_\lambda,r_\lambda)$ of a lower semi-lens $\lambda$ belongs to a
$\Delta$-tile $\Delta=\Delta(Y|ji)$ ($j>i$). Then $e$ is the base edge
$(Y-j,Y-i)$ of $\Delta$, and $\lambda$ has type $ij$ and the upper root just at
$Y$. The operation of eliminating $\lambda$ replaces $\lambda$ and $\Delta$ by
the corresponding upper fan $(\Delta_r\colon r=1,\ldots,p)$, where each
$\Delta_r$ has the top vertex $Y$ and its base edge is $r$-th edge in
$L_\lambda$. Symmetrically, if an upper semi-lens $\lambda$ and a $\nabla$-tile
$\nabla$ share an edge $e$ (which is the longest edge of $\lambda$ and the base
edge of $\nabla$), then the operation replaces $\lambda$ and $\nabla$ by the
corresponding lower fan $(\nabla_r\colon r=1,\ldots,p)$, where the base edge of
$\nabla_r$ is $r$-th edge in $U_\lambda$.
  \smallskip

\noindent\textbf{(C) Creating a semi-lens.} This operation is converse to~(E).
It deals with a lower or upper fan of vertical triangles and replaces them by
the corresponding pair consisting of either an upper semi-lens and a
$\nabla$-tile, or a lower semi-lens and a $\Delta$-tile.
  \smallskip

Again, it is easy to check that (E) and (C) result in correct quasi-combies.
These operations are illustrated in the picture (where $p=3$).

 \vspace{0cm}
\begin{center}
\includegraphics[scale=0.7]{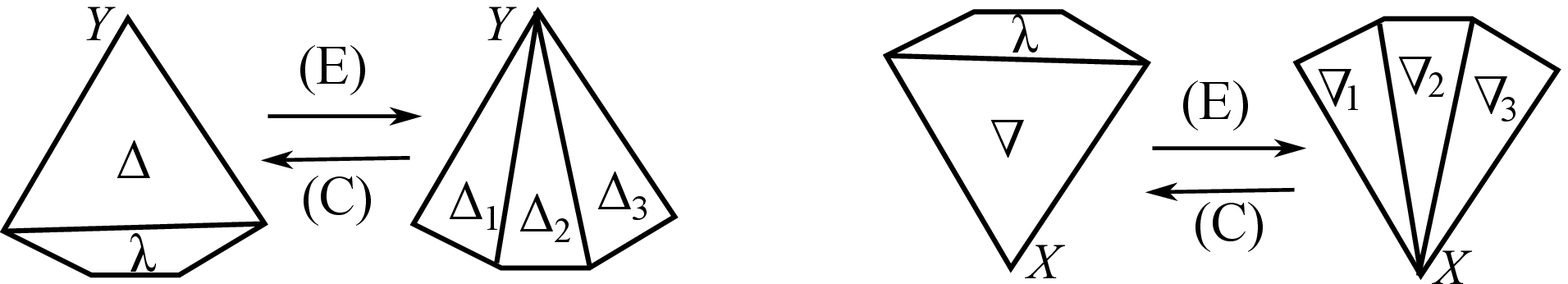}
\end{center}
\vspace{0cm}

Now, given a quasi-combi $K$, we consider the set $\Omega(K)$ of all
quasi-combies $K'$ on $Z'$ with the same spectrum $V_K$, called the
\emph{escort} of $K$ (note that when $K$ matches a w-membrane $K$, $\Omega(K)$
can be larger than $\Escr(M)$). We observe the following
  \begin{lemma} \label{lm:OrbitK}
{\rm(i)} ~$\Omega(K)$ contains exactly one combi. {\rm(ii)} ~$\Omega(K)$ is the set of
quasi-combies that can be obtained from $K$ by use of operations
(S),(M),(E),(C). In particular, $V_K$ is a maximal w-collection in $2^{[n]}$.
  \end{lemma}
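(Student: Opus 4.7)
My plan is to prove both (i) and (ii) simultaneously by reducing them to two intermediate claims: (a) starting from any $K'\in\Omega(K)$, one can reach some combi by a sequence of operations of types (M) and (E); and (b) $\Omega(K)$ contains at most one combi. The four operations all preserve the vertex set, so they act on $\Omega(K)$, and since (S) is inverse to (M) and (C) is inverse to (E), the binary relation ``connected by a finite sequence of operations (S), (M), (E), (C)'' is an equivalence on $\Omega(K)$. Granting (a) and (b), assertion (i) is immediate, and the ``in particular'' clause follows from Theorem~\ref{tm:combi} applied to the unique combi $\tilde K$ produced by (a), since $V_K = V_{\tilde K}$ must then be a maximal w-collection. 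For (ii), given any $K'\in\Omega(K)$, step (a) yields sequences of operations transforming both $K$ and $K'$ into $\tilde K$; reversing (by swapping (M)$\leftrightarrow$(S) and (E)$\leftrightarrow$(C)) the second sequence and concatenating it with the first produces the desired transformation from $K$ to $K'$.

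For (a), I would induct on the number $s(K')$ of semi-lenses in $K'$, showing at each step that $s$ can be strictly decreased. Assume $K'$ has a semi-lens $\lambda$, say a lower one by symmetry, and let $e=(\ell_\lambda,r_\lambda)$ be its longest edge. Since $e$ is an H-edge while the rim of $Z'$ consists of V-edges, $e$ lies in the interior of $Z'$, so there is a unique other tile $\tau$ of $K'$ containing $e$. If $\tau$ is horizontal, the hypothesis of (M) is fulfilled (as $e$ is the longest edge of $\lambda$), and merging decreases $s$ by one. If $\tau$ is vertical, then by comparing levels, $\tau$ must be a $\Delta$-tile with base edge $e$: indeed, the whole $\lambda$ lies below the line through $e$ (all its vertices belonging to $L_\lambda$), so $\tau$ lies above, forcing its remaining vertex to be at level $|\ell_\lambda|+1$; this rules out $\nabla$-tiles, whose remaining vertex would be at level $|\ell_\lambda|-1$. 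Hence (E) applies and replaces $\lambda,\tau$ by an upper fan of $\Delta$-tiles, eliminating $\lambda$ and decreasing $s$ by one. Iterating terminates at a quasi-combi with $s=0$, i.e., a combi.

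For (b), suppose $K_1,K_2\in\Omega(K)$ are both combies. Then $V_{K_1}=V_{K_2}=V_K$, and by Theorem~\ref{tm:combi}, this forces $K_1=K_2$, which closes the argument.

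The main obstacle is the case analysis in (a): verifying that the vertical triangle adjacent to the longest edge of a semi-lens is oriented correctly (a $\Delta$-tile for a lower semi-lens, a $\nabla$-tile for an upper one), so that operation (E) is genuinely applicable. Everything else reduces to the level comparison sketched above together with the manifestly inverse character of the operation pairs (S)/(M) and (E)/(C), which follows directly from their definitions.
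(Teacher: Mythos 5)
Your proposal is correct and follows essentially the same route as the paper: reduce the semi-lens count to zero via (M) and (E), invoke Theorem~\ref{tm:combi} for uniqueness of the resulting combi, and obtain (ii) by reversibility of the operation pairs. The only substantive difference is that you fill in the case analysis justifying that, whenever a semi-lens is present, one of (M) or (E) is actually applicable (the tile on the far side of its longest edge is either horizontal, giving (M), or necessarily a $\Delta$-tile above a lower semi-lens / $\nabla$-tile below an upper one, giving (E)); the paper states this reduction without elaboration.
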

   \begin{proof}
Choosing an arbitrary quasi-combi $K'\in\Omega(K)$ and applying to $K'$ a
series of operations~(M) and (E), one can produce $K^\ast$ having no
semi-lenses at all (since each application of (M) or (E) decreases the number
of semi-lenses). Therefore, $K^\ast$ is a combi with $V_{K^\ast}=V_K=:S$.
Moreover, $K^\ast$ is the unique combi with the given spectrum $S$, by
Theorem~\ref{tm:combi} (see also~\cite[Th.~3.5]{DKK3}). This gives~(i). Now (i)
implies~(ii) (since any $K'\in \Omega(K)$ can be obtained from $K^\ast$ using
(S) and (C), which are converse to (M) and (E)).
   \end{proof}

As a consequence of~\refeq{fully_triang} and Lemma~\ref{lm:OrbitK}, we obtain

  \begin{corollary} \label{cor:wmembr-maxwc}
The spectrum of any w-membrane is a maximal w-collection in $2^{[n]}$.
 \end{corollary}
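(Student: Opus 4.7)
The plan is to derive the corollary as an immediate composition of two previously established facts, with essentially no additional work required.

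First, I would invoke~\refeq{fully_triang}: given a w-membrane $M$ in $\Qfrag$, its image $K:=\pi_\eps(M)$ is a fully triangulated quasi-combi on $Z'=Z(n,2)$. Since $\pi_\eps$ is injective on $M$ by the very definition of a w-membrane, the spectrum of $K$ coincides with the vertex set of $M$, i.e., $V_K = V_M$ as subsets of $2^{[n]}$.

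Second, I would apply Lemma~\ref{lm:OrbitK}(ii), whose last sentence directly states that $V_K$ is a maximal w-collection in $2^{[n]}$ for any quasi-combi $K$. Combining these two observations yields $V_M = V_K \in \bfW_n$, which is precisely the claim of the corollary.

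There is no real obstacle here; the corollary is a bookkeeping consequence of the fact that w-membranes are a special geometric realization of (fully triangulated) quasi-combies, and the structural Lemma~\ref{lm:OrbitK} has already done the substantive work (in particular, exhibiting the unique combi in the escort $\Omega(K)$ via Theorem~\ref{tm:combi} and the sequence of (M),(E) operations). The only point worth double-checking in writing it up is the identification $V_K = V_M$, which follows trivially from injectivity of $\pi_\eps$ on $M$.
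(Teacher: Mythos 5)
Your proposal is correct and matches the paper's own derivation exactly: the paper presents the corollary as an immediate consequence of~\refeq{fully_triang} and Lemma~\ref{lm:OrbitK}, which is precisely the two-step composition you describe. The only added (and harmless) detail is your explicit note that $V_K = V_M$ follows from injectivity of $\pi_\eps$ on $M$.
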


\noindent\textbf{Definition.} A quasi-combi $K$ is called \emph{compatible}
with a cubillage $Q$ if each edge of $K$ is (the image by $\pi_\eps$ of) an edge
of $\Qfrag$. (In particular, $V_K\subset V_Q$.)

  \begin{prop} \label{pr:compat_quasi}
Let $K$ be a quasi-combi on $Z'=Z(n,2)$ compatible with a cubillage $Q$ on
$Z(n,3)$. Then the horizontal tiles (lenses and semi-lenses) of $K$ can be
triangulated so as to turn $K$ into (the image by $\pi_\eps$ of) a
w-membrane in $\Qfrag$.
  \end{prop}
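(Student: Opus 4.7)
The plan is to lift each horizontal tile of $K$ to the appropriate horizontal section of $Z$ and triangulate by the triangulation inherited from $\Qfrag$. First I would fix a horizontal tile $\lambda$, all of whose vertices lie at a common level $h$. Since $\pi_\eps(x,y,z)=(x,z-\eps y)$ is an affine bijection when restricted to $\{z=h\}$, the lens or semi-lens $\lambda$ lifts uniquely to a convex polygon $\tilde\lambda\subseteq\Sigma_h$. By the compatibility of $K$ with $Q$, each boundary edge of $\lambda$ is an H-edge whose $\pi_\eps$-preimage is an edge of $\Qfrag_h$; hence the boundary of $\tilde\lambda$ consists of edges of the 2-dimensional triangulation $\Qfrag_h$ of $\Sigma_h$, and $\tilde\lambda$ is canonically the union of a certain set of upper and lower triangles of $\Qfrag_h$.

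Next I would assemble the 2-complex $\tilde M\subseteq\Qfrag$ from: (i) for each vertical tile of $K$, the unique half-rhombus of $\Qfrag$ whose $\pi_\eps$-image is that tile (again by compatibility); and (ii) for each horizontal tile $\lambda$, all triangles of $\Qfrag_h$ contained in $\tilde\lambda$. Gluing along shared edges is automatic, because all V-edges lift uniquely to edges of $\Qfrag$ by compatibility, and all H-edges used lie in the corresponding section $\Qfrag_h$. To verify that $\tilde M$ is a w-membrane I would check that $\pi_\eps$ maps $\tilde M$ bijectively onto $Z'$: surjectivity is clear, since the tiles of $K$ cover $Z'$ and their triangulations are retained, and injectivity holds because the tiles of $K$ are non-overlapping, each triangulation stays within its tile, and $\pi_\eps$ itself is injective on any single plane $\{z=h\}$ and on any rhombus face of $Q$. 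The boundary of $\tilde M$ then automatically matches $\Zrim$.

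The one subtle point is the a priori possibility that the canonical triangulation of some $\tilde\lambda$ uses an interior vertex of $\Qfrag_h$, thereby enlarging the spectrum beyond $V_K$. Having established that $\tilde M$ is a w-membrane, however, Corollary \ref{cor:wmembr-maxwc} tells us that $V_{\tilde M}$ is a maximal weakly separated collection, while Lemma \ref{lm:OrbitK}(ii) asserts that $V_K$ is already a maximal w-collection; since $V_K\subseteq V_{\tilde M}$, equality forces no such interior vertices to appear, so the triangulation uses only the boundary vertices of each $\lambda$. The hardest step is thus the verification that the lifted triangles and vertical half-rhombi glue correctly into a well-defined subcomplex of $\Qfrag$ on which $\pi_\eps$ is injective; the spectral argument then automatically rules out spurious Steiner points in the interiors of the horizontal tiles.
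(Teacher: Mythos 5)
Your overall strategy coincides with the paper's: lift each horizontal tile $\lambda$ at level $h$ to $\Sigma_h$, observe that compatibility forces its boundary onto edges of $\Qfrag_h$ so that $\tilde\lambda$ is a union of triangles of $\Qfrag_h$, lift each vertical tile to a half-rhombus, glue into a disk $\tilde M$ projecting bijectively onto $Z'$, and then invoke the maximality of both $V_K$ and $V_{\tilde M}$ as weakly separated collections to conclude $V_{\tilde M}=V_K$, which rules out spurious Steiner vertices inside the $\tilde\lambda$. The final spectral argument matches the paper's closing step exactly.

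There is, however, a genuine gap at the step ``for each vertical tile of $K$, the unique half-rhombus of $\Qfrag$ whose $\pi_\eps$-image is that tile (again by compatibility).'' Compatibility, as defined, only says that each \emph{edge} of $K$ is the image of an edge of $\Qfrag$; it does not directly say that a triangle all of whose edges lie in a complex is itself a $2$-face of that complex, and in a general polyhedral complex this implication is false. The existence of the lifted vertical triangle is precisely the substantive point the paper singles out, and the paper proves it by induction on $n$, passing to the $n$-contraction $\Qcon_n$ and its fragmentation, in the style of Proposition~\ref{pr:edge_rh_cube}. Your ``by compatibility'' is therefore circular and must be replaced by an argument. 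Either the paper's inductive contraction will do, or, more directly: for a $\nabla$-tile with vertices $X,Xi,Xj$ ($i<j$), the base H-edge $(Xi,Xj)$ lies in $\Qfrag_h$ with $h=|X|+1$ and so is an edge of some horizontal triangle of $\Qfrag_h$; that triangle is the horizontal section of a single cube of $Q$, and a short set-theoretic check shows this cube must be of the form $\zeta(X|i'j'k')$ with $\{i,j\}\subseteq\{i',j',k'\}$ or $\zeta(X-a\mid a'i'j')$ with $\{i',j'\}=\{i,j\}$, so that in either case $\rho(X|ij)$ is a face of that cube and hence of $Q$; its lower half is then the required vertical triangle of $\Qfrag$. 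With this filled in, the rest of your proof goes through.
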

  \begin{proof}
Let $\tau$ be a $\Delta$- or $\nabla$-tile in $K$; then the edges of $\tau$
belong to $\Qfrag$. Arguing as in the proof of
Proposition~\ref{pr:edge_rh_cube} (using induction on $n$ and considering the
$n$-contraction of $Q$ and its fragmentation), one can show that $\tau$ is a
face (a vertical triangle) of $\Qfrag$. Now consider a lens
or semi-lens $\lambda$ of $K$ lying in level $h$, say. Since all edges of
$\lambda$ belong to $\Qfrag$, the polygon $\lambda$ must be subdivided into a
set of triangles in the section of $\Qfrag$ by the plane $z=h$. Combining such
sets and vertical triangles $\tau$ as above, we obtain a disk bijective to $Z'$
by $\pi_\eps$, yielding a w-membrane $M$ in $\Qfrag$ with $V_K\subset V_M$. Now
the fact that both $V_M$ and $V_K$ are maximal w-collections implies $V_K=V_M$,
and the result follows.
  \end{proof}

Let us say that a quasi-combi $K$ is \emph{non-expensive} if all semi-lenses in
it are triangles and there is no semi-lens $\lambda$ whose longest edge
$(\ell_\lambda,r_{\lambda})$ is simultaneously either an edge of a lens or the
longest edge of another semi-lens. In particular, any combi is non-expensive.

Note that~\refeq{fully_triang} and Proposition~\ref{pr:compat_quasi} imply that
each w-membrane $M$ one-to-one corresponds (via $\pi_\eps$) to a fully
triangulated quasi-combi compatible with $Q$ and having the same spectrum
$V_M$. One more correspondence following from Proposition~\ref{pr:compat_quasi}
concerns non-expensive quasi-combies.

 \begin{corollary} \label{cor:non-expens}
Each w-membrane $M$ one-to-one corresponds to a non-expensive quasi-combi $K$
compatible with $Q$ and such that $V_K=V_M$. Non-expensive quasi-combies with
the same escort have the same set of lenses.
  \end{corollary}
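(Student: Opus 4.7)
The plan is to exhibit the bijection by constructing a map in each direction, and then to deduce the second assertion via Lemma~\ref{lm:OrbitK}(i); indeed, the second assertion will turn out to be the main ingredient for proving that the two constructions are mutual inverses.

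Given a w-membrane $M$, its projection $K_0:=\pi_\eps(M)$ is a fully triangulated quasi-combi compatible with $Q$ and with spectrum $V_M$, by~\refeq{fully_triang}. Starting from $K_0$, I apply the merging operation~(M) iteratively: any pair of horizontal tiles sharing an H-edge that is the longest edge of at least one of them gets merged. Since each application strictly decreases the number of H-edges, the process terminates at a quasi-combi $K(M)$. By construction no further~(M) applies, which is precisely the non-expensive condition; compatibility with $Q$ is preserved because~(M) only removes edges; and $V_{K(M)}=V_M$ by Lemma~\ref{lm:OrbitK}(ii). In the opposite direction, given a non-expensive quasi-combi $K$ compatible with $Q$, Proposition~\ref{pr:compat_quasi} provides a w-membrane $M(K)$ in $\Qfrag$ refining $K$: each lens of $K$ at level $h$ is subdivided by the unique triangulation inherited from $\Qfrag_h$. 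The inclusion $V_K\subseteq V_{M(K)}$ holds by construction, and since both sides are maximal w-collections (by Corollary~\ref{cor:wmembr-maxwc} and Lemma~\ref{lm:OrbitK}(ii)), they are equal.

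Next I plan to prove the second assertion. Let $K,K'$ be non-expensive quasi-combies with the same escort $\Omega$, so $V_K=V_{K'}$; by Lemma~\ref{lm:OrbitK}(i), $\Omega$ contains a unique combi $K^\ast$, which is trivially non-expensive since it has no semi-lens at all. For any non-expensive $K\in\Omega$, each semi-lens triangle $\lambda$ has its longest edge $e=(\ell_\lambda,r_\lambda)$ neither on a lens edge nor on the longest edge of another semi-lens; in the quasi-combi structure $e$ must therefore be shared with a vertical $\Delta$- or $\nabla$-triangle, so operation~(E) applies to $\lambda$ together with that vertical partner. Operation~(E) rewires only the semi-lens and its partner into a fan of vertical triangles, so no lens is affected. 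Iterating (E) over all semi-lens triangles of $K$ produces a semi-lens-free member of $\Omega$, which by uniqueness is $K^\ast$. Hence $K$ and $K^\ast$ share the same lenses, and the same conclusion applies to $K'$; this yields the second assertion.

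Finally I use the second assertion to close the first. Any two maximally merged forms of $\pi_\eps(M)$ obtained by different orderings of~(M) are non-expensive and lie in $\Omega(V_M)$; by Step~2 they have the same lenses, and compatibility with $Q$ forces their semi-lens triangles to coincide as well (each semi-lens triangle of $K(M)$ is a specific horizontal triangle of $\Qfrag$ coming from $M$ and lying outside the lens regions of $K^\ast$). This gives confluence, hence well-definedness of $M\mapsto K(M)$. The identity $M(K(M))=M$ is then clear: triangulating the lenses of $K(M)$ back by $\Qfrag$ reinstates exactly the horizontal triangles of $\pi_\eps(M)$, while vertical triangles are untouched by~(M). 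For the converse $K(M(K))=K$, the main obstacle is to check that each interior edge introduced by the $\Qfrag$-triangulation inside a lens of $K$ is actually removable by~(M) — i.e., the longest edge of at least one of its two adjacent horizontal triangles; I expect to verify this as a local combinatorial property of horizontal sections of $\Qfrag$, using~\refeq{vert-hor} and the known structure of $\Qfrag_h$ as a maximal w-collection triangulation. Once this is settled, the maximal merging of $\pi_\eps(M(K))$ regenerates the lenses of $K$ and, by non-expensiveness together with compatibility, matches its semi-lens triangles, giving $K(M(K))=K$.
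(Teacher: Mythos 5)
Your proposal follows the same high-level strategy as the paper (the reverse map via lens subdivision using $\Qfrag$, and the second assertion via a chain of (E) operations), but the forward map $M\mapsto K(M)$ by unrestricted greedy application of (M) is where it breaks down. The central claim that ``no further (M) applies, which is precisely the non-expensive condition'' is false in both directions, and this is not a cosmetic slip.

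On one hand, termination of (M) says nothing about semi-lenses being triangles, which is half of the definition of non-expensive. On the other hand, the non-expensive condition only forbids the longest edge $e$ of a semi-lens $\lambda$ from lying on a lens or from coinciding with the \emph{longest} edge of another semi-lens; it does \emph{not} forbid $e$ from being a non-longest edge of another semi-lens $\mu$. In exactly that allowed configuration, (M) still applies to the pair $(\lambda,\mu)$, and greedy merging fuses them into a semi-lens on four vertices, overshooting. Such a configuration is combinatorially available in $\Qfrag_h$: a lower horizontal triangle $\sigma$ with vertices $Xij,Xik,Xjk$ (from $\zeta(X|ijk)$) can have its short edge $(Xij,Xik)$ equal to the longest edge of another lower horizontal triangle $\nu$ coming from a cube $\zeta(X\cup\{i\}\setminus\{j'\}\,|\,jj'k)$ with $j<j'<k$, $j'\in X$; when both $\sigma$ and $\nu$ occur in the w-membrane $M$ and $\sigma$'s top edge is covered by a vertical $\Delta$-triangle of $M$, your greedy merge produces a four-vertex semi-lens rather than anything non-expensive. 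The same imprecision reappears in your proof of the second assertion, where you write that the longest edge of a semi-lens ``must therefore be shared with a vertical $\Delta$- or $\nabla$-triangle'' — again not forced, for the same reason — so that (E) may not be immediately applicable to an arbitrarily chosen semi-lens; one must instead select a topologically extremal one (a routine but omitted step). Because of these issues the forward map is not well-defined as stated, and the confluence argument in the last paragraph inherits the same gap. The paper avoids all of this by only describing the unambiguous direction (non-expensive $K\mapsto M$ by subdividing lenses) and deducing the rest via Proposition~\ref{pr:compat_quasi} and the (E)/tetrahedral-flip correspondence, rather than by greedy (M)-reduction.
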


Indeed, for a non-expensive quasi-combi $K$, the corresponding w-membrane $M$
is obtained by subdividing each lens of $K$ into triangles of $\Qfrag$. We also
use the fact that each application of~(E) matches a tetrahedral flip in the
corresponding w-membrane (since each semi-lens is a triangle), and a series of
such operations results in a combi with the same set of lenses.

A sharper version of above results is stated by weakening the requirement of
compatibility.

  \begin{theorem} \label{tm:combi-wmembr}
For each maximal by size w-collection $W$ contained in the spectrum $V_Q$ of a
cubillage $Q$, there exists a w-membrane $M$ in $\Qfrag$ with $V_M=W$.
  \end{theorem}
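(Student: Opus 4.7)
The plan is to use Theorem~\ref{tm:combi} to associate to $W$ the unique combi $K=K(W)$ with $V_K=W$, and then to produce a quasi-combi $K'$ in the escort $\Omega(K)$ (so that $V_{K'}=V_K=W$) that is compatible with $Q$. Proposition~\ref{pr:compat_quasi} applied to such a $K'$ will yield a w-membrane $M$ in $\Qfrag$ with $V_M\supseteq V_{K'}=W$, and then Corollary~\ref{cor:wmembr-maxwc} combined with the w-purity of $2^{[n]}$ (equation~\refeq{w-pure}) and the maximality of $|W|$ will force $V_M=W$.

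To obtain such a $K'$, I would first note that all V-edges of $K$ already lie in $\Qfrag$ by Proposition~\ref{pr:edge_rh_cube}(i), since their endpoints belong to $V_K\subset V_Q$. For the H-edges, the key ingredient is an analogue of Proposition~\ref{pr:edge_rh_cube} for vertical triangles: if $X, Xi, Xj\in V_Q$ with $i<j$, then $\{X, Xi, Xj\}$ is a face of a $\nabla$- or $\square$-fragment of $\Qfrag$ (and symmetrically for triples $\{Y-j, Y-i, Y\}$). Its proof would parallel that of Proposition~\ref{pr:edge_rh_cube}, by induction on $n$ using the $n$-contraction and $n$-expansion of $Q$; the subtle case is when $j=n$, which requires chasing the triangle through the $n$-pie $\Pi_n$. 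Applied to each $\Delta$- and $\nabla$-tile of $K$, this places its base H-edge inside $\Qfrag$. For the remaining H-edges of $K$, which lie on boundaries of lenses, I would modify $K$ within $\Omega(K)$ via operations (S), (M), (E), (C) so that every lens boundary edge becomes the base of an adjacent vertical tile of the resulting quasi-combi $K'$, whence again lies in $\Qfrag$.

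The main obstacle is to ensure $V_M=W$ rather than merely $V_M\supseteq W$. Equivalently, no vertex of $V_Q\cap\binom{[n]}{h}$ may lie in the strict interior of any lens $\lambda$ of $K$ at level $h$. I would deduce this from maximality: by standard structural results on combies from~\cite{DKK3}, any such interior vertex $v$ is weakly separated from every element of $W$, so that $W\cup\{v\}$ would be a weakly separated collection strictly larger than $W$, contradicting $|W|=\binom{n}{2}+\binom{n}{1}+\binom{n}{0}$. With this ruled out, the triangulation that $\Qfrag_h$ induces on $\lambda$ uses only the boundary vertices of $\lambda$, so combining the vertical triangles from the second step with the lens triangulations produces the desired w-membrane $M$ of spectrum exactly $W$.
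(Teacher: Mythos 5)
Your overall strategy --- start from the combi $K$ with $V_K=W$, argue that its cells fit inside $\Qfrag$, and then use Corollary~\ref{cor:wmembr-maxwc} together with w-purity to force $V_M=W$ --- is the same as the paper's, and the closing maximality argument is correct (the third-paragraph detour through interior lens vertices is redundant given the cleaner size argument you already give in the first paragraph). The gap is in the middle step: the auxiliary lemma you propose is false as stated.

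You claim that $X,Xi,Xj\in V_Q$ (with $i<j$) forces $\{X,Xi,Xj\}$ to be a face of a $\nabla$- or $\square$-fragment of $\Qfrag$. Any such face is half of the rhombus $\rho(X|ij)$, so this would imply $\rho(X|ij)\in Q$ and hence $Xij\in V_Q$ --- which simply does not follow from $X,Xi,Xj\in V_Q$ alone. Concretely, take $n=4$ and the cubillage $Q$ of Lemma~\ref{lm:example} containing $13$, with cubes $\zeta(\emptyset|123),\zeta(\emptyset|134),\zeta(1|234),\zeta(3|124)$. Then $\emptyset,\{2\},\{4\}\in V_Q$ and even the edges $(\emptyset,2)$, $(\emptyset,4)$ lie in $Q$, yet $\{2,4\}\notin V_Q$; so $\rho(\emptyset|24)\notin Q$ and $\{\emptyset,2,4\}$ is not a face of $\Qfrag$ (and it cannot be a $\square$-face, since octahedra have no level-$0$ vertex). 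The paper's version~\refeq{vert_trian} is asserted only for vertical triangles $\tau$ \emph{of the combi} $K$, and the combi structure does essential work there: the induction through the $n$- (or $1$-) contraction relies on the reduction fact~\refeq{reduc_tau} from~\cite{DKK3}, which says the contracted combi again has a vertical triangle of the right type and has no analogue for arbitrary triples in $V_Q$. You also do not address the true base case, when $\tau$ has type $1n$; the paper handles it via a separate combinatorial Claim (if $Y$ is chord separated from $X$, $X1$, $Xn$ then $Y$ is chord separated from $X1n$), which lets one invoke the maximality of $V_Q$ to get $X1n\in V_Q$. Without restricting the lemma to the triangles of $K$ and supplying that Claim, your plan does not go through.
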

  \begin{proof}
Let $K$ be the combi with $V_K=W$. In light of reasonings in the proof of
Proposition~\ref{pr:compat_quasi}, it suffices to show that
  \begin{numitem1} \label{eq:vert_trian}
each vertical triangle $\tau$ of $K$ is extended to a rhombus of $Q$ (and
therefore $\tau$ is a face of $\Qfrag$).
  \end{numitem1}

To see this, we rely on the following fact (which is interesting in its own
right).
  \medskip

\noindent\textbf{Claim.} \emph{Let a set $Y\subset[n]$ be chord separated from
each of $X,X1,Xn$ for some $X\subseteq [n]-\{1,n\}$. Then $Y$ is chord
separated from $X1n$ as well.}
  \medskip

\noindent\textbf{Proof of the Claim.} Let $1,\ldots,n$ be disposed in this
order on a circumference $O$. Let $Y':=Y-X$ and $X':=X-Y$. One may assume that
$1,n\notin Y'$ (otherwise the chord separation of $Y$ and $X1n$ immediately
follows from that of $Y,X,X1,Xn$).

If $Y$ and $X1n$ are not chord separated, then there are elements $x,x'\in
X'1n$ and $y,y'\in Y'$ such that the corresponding chords $e=[x,x']$ and
$e'=[y,y']$ ``cross'' each other. Then $\{x,x'\}\ne\{1,n\}$ (since $1,n$ are
neighboring in $O$). So one may assume that $x\in X'$ (and $x'\in X'1n$). But
in each possible case $(x'\in X'$, $x'=1$ or $x'=n$), the chord $e$ crossing
$e'$ connects two elements of either $X'$ or $X'1$ or $X'n$; a contradiction.
\hfill \qed
  \medskip

Now consider a $\nabla$-tile $\nabla=\nabla(X|ij)$ of $K$ (having the vertices
$X,Xi,Xj$ with $i<j$). If $\{i,j\}=\{1,n\}$, then, by the Claim (and
Theorem~\ref{tm:galash}), $Xij$ is chord separated from all vertices of $Q$,
and the maximality of $V_Q$ implies that $Xij$ is a vertex of $Q$ as well.
Hence, by Proposition~\ref{pr:edge_rh_cube}(ii), $Q$ contains the rhombus
$\rho(X|ij)$, as required.

So we may assume that at least one of $j<n$ and $1<i$ takes place. Assuming the
former, we use induction on $n$ and argue as follows.

Let $Q'$ be the $n$-contraction of $Q$, and $M$ the s-membrane in $Q'$ that is
the image of the $n$-pie in $Q$ (for definitions, see Sect.~\SSEC{pies}).
Besides $Q'$, we need to consider the reduced set $W':=\{A\subseteq[n-1]\colon
A$ or $An$ or both belong to $W\}$. Then $W'$ is a maximal w-collection in
$2^{[n-1]}$, and as is shown in~\cite{DKK3},
  \begin{numitem1} \label{eq:reduc_tau}
if $\tau$ is a vertical triangle of $K$ having type $ij$ with $j<n$, if $A,B,C$
are the vertices of $\tau$, and if $K'$ is the combi on $Z(n-1,2)$ with
$V_{K'}=W'$, then $K'$ has a vertical triangle with the vertices
$A-n,\,B-n,\,C-n$.
  \end{numitem1}

Now consider two: $n\notin X$ and $n\in X$.

If $n\notin X$, then $X,Xi,Xj$ are vertices of $Q'$ and simultaneously
vertices of the reduced combi $K'$. By~\refeq{reduc_tau}, $K'$ has the tile
$\nabla'=\nabla(X|ij)$. By indiction, the vertices of $\nabla'$ are extended to
a rhombus $\rho'$ of $Q'$. This $\rho'$ is lifted to $Q$, as required.

If $n\in X$, then $Q'$ and $K'$ have vertices $X',X'i,X'j$ for $X':=X-n$, ~$K'$
has the triangle $\nabla'=\nabla(X'|ij)$ (by~\refeq{reduc_tau}), the vertices
of $\nabla'$ are extended to a rhombus $\rho'$ of $Q'$, and $\rho'$ is lifted
to the desired rhombus $\rho(X|ij)$ in $Q$.

The case of a $\Delta$-tile $\Delta=\Delta(Y|ji)$ of $K$ with $i<j<n$ is symmetric.

Finally, if $1<i<j=n$, we act in a similar fashion, but applying to $Q$ the
1-contraction operation, rather than the $n$-contraction one (this is just the
place where we use the 1-contraction mentioned in Sect.~\SSEC{pies}); the
details are left to the reader.

This completes the proof of the theorem.
  \end{proof}

%----------------------- Sec.7

\section{Extending a combi to a cubillage} \label{sec:embed_combi}

The purpose of this section is to explain how to efficiently extend a fixed
maximal w-collection in $2^{[n]}$ to a maximal c-collection, working with their
geometric interpretations: combies and cubillages. Our construction will imply the
following

  \begin{theorem} \label{tm:embed_combi}
Given a maximal weakly separated collection $W\subset 2^{[n]}$, one can find,
in polynomial time, a maximal chord separated collection $C\subset 2^{[n]}$
including $W$.
  \end{theorem}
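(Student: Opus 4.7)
The plan is to imitate the two-stage cubillage-growing procedure of Section~\SEC{embed_rt}, replacing rhombus tilings and s-membranes with combies and w-membranes, and replacing strong hexagonal flips with the tetrahedral and octahedral flips introduced in Section~\SEC{w-membr}.

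First, construct in polynomial time the combi $K$ with $V_K=W$, using Theorem~\THEO{combi} and the efficient algorithm of~\cite{DKK3}. Then lift $K$ to an abstract w-membrane $M$ in the ``empty'' zonotope $Z=Z(n,3)$: each vertex $X\in W$ is placed at the point $\sum_{i\in X}\theta_i$ (so its height equals $|X|$), each vertical triangle of $K$ is interpreted as a vertical triangle in $3$-space, and each lens $\lambda$ of $K$ (whose vertices all lie in a single horizontal section of $Z$) is triangulated into upper and lower horizontal triangles using only its own vertices. A direct check shows that $\pi_\eps$ maps the resulting disk bijectively onto $Z(n,2)$ with $\partial M=\Zrim$, so $M$ qualifies as a valid abstract w-membrane with $V_M=W$.

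Next, grow partial cubillages $Q^-$ filling $Z^-(M)$ and $Q^+$ filling $Z^+(M)$ in parallel to Section~\SSEC{memb-to-cube}. Starting from $Q^-:=\emptyset$ and $M^-:=M$, while $M^-\ne\Zfr$ we locate on $M^-$ either a $\nabla$- or $\Delta$-tetrahedral rear configuration or a $\square$-octahedral rear configuration, attach the corresponding fragment to $Q^-$, and update $M^-$ by the induced lowering flip; the procedure terminates when $M^-=\Zfr$, and the fragments accumulated in $Q^-$ glue into whole cubes. The symmetric process with raising flips yields $Q^+$ ending at $\Zrear$. Then $Q:=Q^-\cup Q^+$ is a cubillage with $V_Q\supseteq V_M=W$, so $C:=V_Q$ is the desired maximal chord separated collection containing $W$.

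The chief difficulty is establishing that a suitable lowering flip is always available while $M^-\ne\Zfr$ (and symmetrically for raising flips), and that the pieces glued at successive steps assemble into a bona fide cubillage rather than an inconsistent heap. This is the w-analog of the $Y/\Yturn$-hexagon availability used for s-membranes in Section~\SSEC{memb-to-cube}; it will be deduced from the connectivity of the combi poset under the weak flips~\refeq{weak_flip} from~\cite{DKK1,DKK2,DKK3}, combined with the connectivity of escorts $\Escr(M^-)$ under tetrahedral flips sketched in Section~\SSEC{wmembran}. For complexity, \refeq{one_ijk} bounds the number of cubes in any cubillage on $Z(n,3)$ by $\binom{n}{3}$, so at most $O(n^3)$ flips suffice, each identifiable in polynomial time by scanning the current boundary; this delivers the claimed polynomial-time algorithm.
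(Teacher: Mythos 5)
Your overall strategy is the same as the paper's: lift the combi $K$ into $Z(n,3)$ as an abstract w-membrane by triangulating its lenses, then grow partial fragmentations $F^-$ and $F^+$ toward $\Zfr$ and $\Zrear$ by successive tetrahedral and octahedral lowering/raising flips, and finally read off the spectrum of the resulting cubillage.

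The genuine gap is exactly the step you flag as ``the chief difficulty,'' and the tools you propose for closing it are not the right ones. Connectivity of the combi poset under weak flips~\refeq{weak_flip} concerns transitions between \emph{combies} (i.e., between distinct maximal w-collections); it does not tell you that a current w-membrane $M^-$, which is a fully triangulated quasi-combi sitting as a boundary surface of a partially filled region of $Z(n,3)$, exposes a rear configuration of $\Qfrag$-fragments that can be peeled off. Likewise, connectivity of the escort $\Escr(M^-)$ under tetrahedral flips only moves you among w-membranes with the \emph{same} spectrum and cannot by itself make progress toward $\Zfr$. What the paper actually does is a direct constructive case analysis: if $M^-$ has no semi-lenses it is an s-membrane in disguise, and one falls back on the hexagonal-flip availability of Sect.~\SSEC{memb-to-cube}; otherwise one takes the \emph{lowest} level $h$ carrying semi-lenses, chooses a semi-lens $\lambda$ there none of whose lower-boundary edges lies in another semi-lens (existence via an acyclicity argument akin to Lemma~\ref{lm:acyclic}), and then treats the upper-triangle case ($\nabla$-fragment flip) and the lower-triangle case separately; the lower-triangle case further splits into an immediate octahedral flip when the upper fan over the apex has a single $\Delta$-tile, and a preparatory sequence of $\Delta$-fragment additions collapsing the fan before the octahedral flip when the fan is longer. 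None of this follows formally from the connectivity facts you cite, so as written your argument does not establish that the greedy peeling never gets stuck.

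A secondary, smaller issue: you assert without argument that the fragments accumulated ``glue into whole cubes.'' This is not automatic at intermediate stages (the construction lives in the fragmentation world, with $\nabla$-, $\square$-, and $\Delta$-pieces added one at a time), and it needs the observation that once $Z^-(K)\cup Z^+(K)=Z$ is completely filled by fragments respecting the horizontal sections, the result is the fragmentation of some cubillage. That final consolidation step should at least be stated explicitly.
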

  \begin{proof}
It is convenient to work with an arbitrary fully triangulated quasi-combi $K$
with $V_K=W$. The goal is to construct a cubillage $Q$ on $Z=Z(n,3)$ whose
fragmentation $\Qfrag$ contains $K$ as a w-membrane. (Note that it is routine
to construct the (unique) combi with the spectrum $W$ (see~\cite{DKK3} for
details), and to form $K$, we subdivide each lens of the combi into the pair of upper and
lower semi-lenses and then triangulate them arbitrarily. The resulting
cubillage $Q$ will depend on the choice of such triangulations.)

We start with properly embedding $K$ into the ``empty'' zonotope $Z$, and our
method consists of two phases. At the first (second) phase, we construct a
partial fragmentation $F^-$ (resp. $F^+$) consisting of $\nabla$-, $\square$-,
and $\Delta$-fragments of some cubes $\zeta(X|ijk)$ (where, as usual, $i<j<k$
and $X\subseteq [n]-\{i,j,k\}$) filling the region $Z^-(K)$ of $Z$ between
$\Zfr$ and $K$ (resp. the region $Z^+(K)$ between $K$ and $\Zrear$). (For
definitions, see Sect.~\SSEC{fragment}.) Then $F:=F^+ \cup F^-$ is a
subdivision of $Z$ into such fragments, and it is not difficult to realize that
$F$ is just the fragmentation $\Qfrag$ of some cubillage $Q$; so $\Qfrag$ is as
required for the given $K$.

Next we describe the first phase. At each step in it, we deal with one more
w-membrane $M$ such that
  \begin{itemize}
\item[$(\ast)$]
$M$ lies entirely in $Z^-(K)$, and there is a partial fragmentation $F'$
filling the region $Z(M,K)$ between $M$ and $K$ (i.e., $F'$ is a
subdivision of $Z(M,K)$ into $\nabla$-, $\square$-, and $\Delta$-fragments).
  \end{itemize}

If $M$ (regarded as a fully triangulated quasi-combi) has no horizontal
triangle (semi-lens), then $M$ is, in essence, a rhombus tiling in which
each rhombus $\rho(X|ij)$ is cut into two vertical triangles, namely,
$\nabla(X|ij)$ and $\Delta(Xij|ji)$. So $M$ can be identified with the
corresponding s-membrane, and we can construct a partial cubillage $Q'$ filling
the region $Z^-(M)$ (between $\Zfr$ and $M$) by acting as in
Sect.~\SSEC{memb-to-cube}. Combining $Q'$ and $F'$, we obtain the desired
fragmentation $F^-$ filling $Z^-(K)$.

Now assume that $M$ has at least one semi-lens, and let $h$ be minimum so that
the set $\Lambda$ of semi-lenses in the level $z=h$ is nonempty. Choose
$\lambda\in\Lambda$ such that no edge in its lower boundary $L_\lambda$ belongs
to another semi-lens. (The existence of such a $\lambda$ is provided by the
acyclicity of the directed graph whose vertices are the elements of $\Lambda$
and whose edges are the pairs $(\lambda,\lambda')$ such that $U_\lambda$ and
$L_{\lambda'}$ share an edge, which follows, e.g., from
Lemma~\ref{lm:acyclic}.) Two cases are possible.
  \medskip

\emph{Case 1}: $\lambda$ is an upper triangle, i.e., $L_\lambda$ consists of a
single edge, namely, $e=(\ell_\lambda,r_\lambda)$. Let $U_\lambda$ have vertices
$Xi=\ell_\lambda$, $Xj$ and $X_k=r_\lambda$ ($i<j<k$). Then $e$ belongs to a
$\nabla$-tile in $M$, namely, $\nabla=\nabla(X|ik)$. Form the $\nabla$-fragment
$\tau=\zeta^\nabla(X|ijk)$ (the lower tetrahedron with the vertices
$X,Xi,Xj,Xk)$. We add $\tau$ to $F'$ and accordingly make the lowering flip in
$M$ using $\tau$ (which replaces the triangles $\lambda,\nabla$ forming
$\taurear_\eps$ by $\nabla(X|ij)$ and $\nabla(X|jk)$ forming $\taufr_\eps$; see
Sect.~\SSEC{wmembran}). The new $M$ is a correct fully triangulated quasi-combi
(embedded as a w-membrane in $Z$), which is closer to $\Zfr$.
  \medskip

\emph{Case 2}: $\lambda$ is a lower triangle. Then $L_\lambda$ consists of two
edges: $e=(\ell_\lambda=Y-k,Y-j)$ and $e'=(Y-j,Y-i=r_\lambda)$, where $i<j<k$.
Also by the choice of $h$, the edges $e,e'$ belong to $\nabla$-tiles of $M$,
namely, those of the form $\nabla=\nabla(X|jk)$ and $\nabla'=\nabla(X'|ij)$,
respectively, where $X:=Y-\{j,k\}$ and $X':=Y-\{i,j\}$. See the left fragment
of the picture.

   \vspace{-0.2cm}
\begin{center}
\includegraphics[scale=0.85]{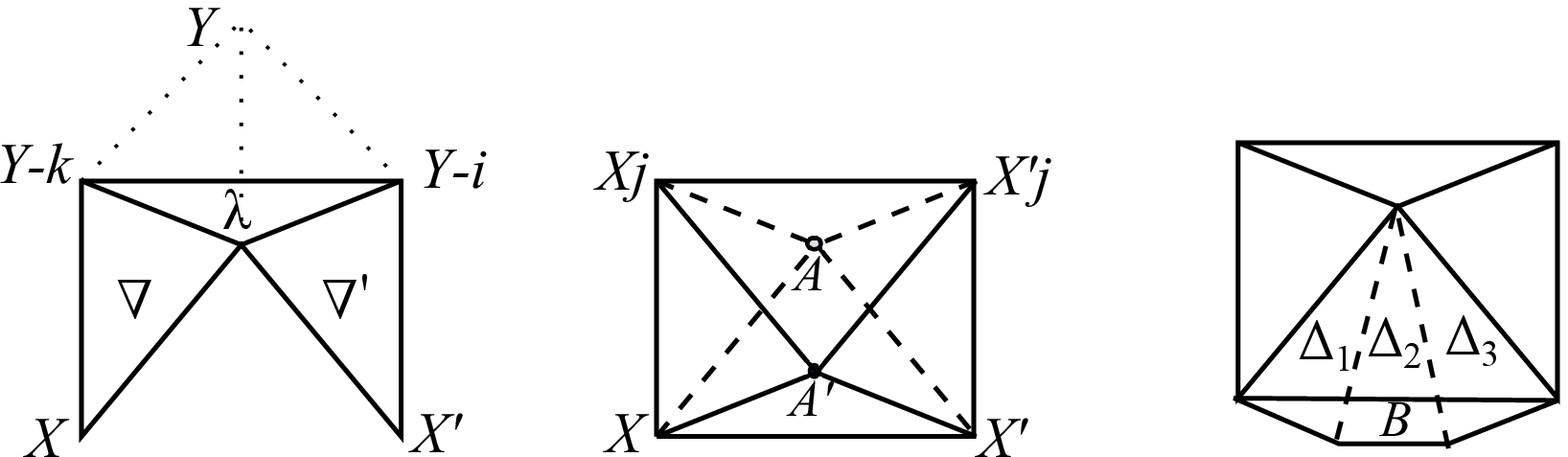}
\end{center}
\vspace{0cm}

Let $A:=Y-j$ ($=Xk=X'i$). Note that the ``sector'' between the edges $(X,A)$
and $(X',A)$ is filled by an upper fan $(\Delta_1,\ldots,\Delta_p)$, where
$\Delta_r=\Delta(A|i_{r-1}i_r)$ and $k=i_0>i_1\cdots>i_p=i$ (cf.
Sect~\SSEC{wmembran}). Consider two possibilities.
  \smallskip

\emph{Subcase 2a}: $p=1$, i.e., the fan consists of only one tile, namely,
$\Delta=\Delta(A|ki)$. Observe that the vertices $X,X',Xj,X'j,A$ belong
to an octahedron, namely, $\tau=\zeta^\square(\tilde X|ijk)$, where $\tilde X$
denotes $X-i=X'-k$. Moreover, the triangles $\lambda,\Delta,\nabla,\nabla'$
form the rear side of $\tau$. We add $\tau$ to $F'$ and accordingly make the
octahedral flip in $M$ using $\tau$, which replaces $\taurear_\eps$ by the front side
$\taufr_\eps$ formed by four triangles shared the new vertex $A':=\tilde Xj$.
(See the middle fragment of the above picture where the new triangles are
indicated by solid lines.) The new $M$ is again a correct w-membrane
closer to $\Zfr$. Note that under the flip, the semi-lens $\lambda$ is replaced
by an upper semi-lens $\lambda'$ in level $h-1$ (this $\lambda'$ has the
longest edge $(X,X')$ and the top $A'$).
  \smallskip

\emph{Subcase 2b}: $p>1$. Then $X$ and $X'$ are connected in $M$ by the path
$P$ that passes the vertices $X=A-i_0,A-i_1,\ldots, A-i_p=X'$. We make two
transformations. First we connect $X$ and $X'$ by line-segment $\tilde e$.
Since $\tilde e$ lies in the region $Z^-(M)$ (in view of~\refeq{cyc_conf}), so
does the entire truncated polyhedral cone $\Sigma$ with the top vertex $A$ and
the base polygon $B$ bounded by $P\cup\tilde e$. See the right fragment of the
above picture (where $p=3$). We subdivide $B$ into $p-1$ triangles
$\sigma_1,\ldots,\sigma_{p-1}$ (having vertices on $P$) and extend each
$\sigma_r$ to tetrahedron $\tau_r$ with the top $A$. These
$\tau_1,\ldots,\tau_{p-1}$ subdivide $\Sigma$ into $\Delta$-fragments (each
being of the form $\zeta^\Delta(A|i_\alpha i_\beta i_\gamma)$ for some $0\le
\alpha<\beta<\gamma\le p$). Observe that the rear side $\Sigma^{\rm re}_\eps$
of $\Sigma$ is formed by the fan $(\Delta_1,\ldots,\Delta_p)$, whereas
$\Sigma^{\rm fr}_\eps$ consists of the lower horizontal triangles
$\sigma_1,\ldots,\sigma_{p-1}$ plus the vertical triangle with the top $A$ and
the base $\tilde e$, denoted as $\tilde \Delta$.

We add the fragments $\tau_1,\ldots,\tau_{p-1}$ to $F'$ and accordingly update
$M$ by replacing the triangles of $\Sigma^{\rm re}_\eps$ by the ones of
$\Sigma^{\rm fr}_\eps$ (as though making $p-1$ lowering tetrahedral flips). The
new w-membrane has the upper fan at $A$ consisting of a unique $\Delta$-tile,
namely, $\tilde \Delta$, and now we make the second transformation, by applying
the octahedral flip as in Subcase~2a (involving the triangles
$\lambda,\tilde\Delta,\nabla,\nabla'$ on the same vertices $X,X',Xj,X'j,A$).

Doing so, we eventually get rid of semi-lenses in the current $M$; so $M$
becomes an s-membrane in essence, which enables us to extend the current $F'$
to the desired fragmentation $F^-$ filling $Z^-(K)$ (by acting as in
Sect.~\SSEC{memb-to-cube}).

At the second phase, we act ``symmetrically'', starting with $M:=K$ and moving
toward $\Zrear$, in order to obtain a fragmentation $F^+$ filling $Z^+(K)$.
Then $F^-\cup F^+$ is as required, and the theorem follows.
  \end{proof}

    %--------------------

 \end{document}